\tikzset{bullet/.pic={\fill[radius=.3ex] circle;}}
\def\qed{\mbox{$\Box$}}
\newenvironment{proof}[1][Proof]{\par\noindent{\bf #1.\ }}{\hfill\qed\\ }
\font\tenmsbm=msbm10\textfont
\font\sevenmsbm=msbm7
\numberwithin{equation}{section}
\newtheorem{theorem}{Theorem}[section]
\newtheorem{lemma}[theorem]{Lemma}
\newtheorem{proposition}[theorem]{Proposition}
\newtheorem{thm}[theorem]{Theorem}
\newcommand{\one}{\mathbbm{1}}
\newcommand{\cX}{\mathcal{X}}
\newcommand{\Rm}{{\mathbb R}}
\newcommand{\Pm}{{\mathbb P}}
\def\1{\mathbf{1}}
\newcommand{\disp}{\displaystyle}
\newcommand{\be}{\begin{equation}}
\newcommand{\ee}{\end{equation}}
\newcommand{\bal}{\begin{aligned}}
\newcommand{\enbal}{\end{aligned}}
\newcommand{\pdr}[2]{\partial_{#2}{#1}}
\newcommand{\pdrr}[2]{\partial^2_{#2}{#1}}
\newcommand{\farc}{\frac}
\newcommand{\qe}{q_{\mathrm{e}}} 
\newcommand{\qm}{q_{\mathrm{m}}} 
\newcommand{\shift}{\mu} 
\newcommand{\wshift}{\nu} 
\newcommand{\earlyt}{t_-} 
\newcommand{\Ufactor}{A} 
\newcommand{\nonlin}{F} 
\newcommand{\earlyxi}{\xi_-} 
\newcommand{\coeff}{M} 
\newcommand{\rcoeff}{\widetilde{M}} 
\newcommand{\R}{\mathbb{R}}
\newcommand{\E}{\mathbb{E}}
\renewcommand{\P}{\mathbb{P}}
\newcommand{\eps}{\varepsilon}
\newcommand{\al}{\alpha}
\newcommand{\m}[1]{\mathcal{#1}}
\newcommand{\dn}{\mathrm{d}}
\newcommand{\ds}{\,\mathrm{d}}
\renewcommand{\d}{\;\mathrm{d}}
\newcommand{\der}[2]{\frac{\dn #1}{\dn #2}}
\newcommand{\abs}[1]{\left |#1 \right|}
\newcommand{\ti}[1]{\tilde{#1}}
\newcommand{\ubar}[1]{\underline{#1}}
\newcommand{\trm}[1]{\textup{\textrm{#1}}}
\newcommand{\anon}{\,\cdot\,}
\renewcommand{\And}{\quad \textrm{and} \quad}
\newcommand{\ForAll}{\quad \textrm{for all }}
\let\bar\smallbar
\title{The distance between the two BBM leaders}
\author{Julien Berestycki\footnote{Department of Statistics and Magdalen College, University of Oxford, UK; \href{mailto:julien.berestycki@stats.ox.ac.uk}{julien.berestycki@stats.ox.ac.uk}} 
 \and \'Eric Brunet\footnote{Laboratoire de Physique de l'\'Ecole normale sup\'erieure,
ENS, Universit\'e PSL, CNRS, Sorbonne Universit\'e, Universit\'e de Paris,
F-75005 Paris, France; \href{mailto:eric.brunet@ens.fr}{eric.brunet@ens.fr}}
  \and Cole Graham\footnote{Department of Mathematics, Stanford University, Stanford, CA 94350, USA; \href{mailto:grahamca@stanford.edu}{grahamca@stanford.edu}}
   \and Leonid Mytnik\footnote{Faculty of Industrial Engineering and Management, Technion, Technion City, Haifa 3200003, Israel; \href{mailto:leonid@ie.technion.ac.il}{leonid@ie.technion.ac.il}}
\and Jean-Michel Roquejoffre\footnote{Institut de Math\'ematiques de Toulouse; UMR 5219, Universit\'e de Toulouse; CNRS, UPS IMT, F-31062 Toulouse Cedex 9, France; E-mail: \href{mailto:Jean-Michel.Roquejoffre@math.univ-toulouse.fr}{Jean-Michel.Roquejoffre@math.univ-toulouse.fr}}
\and Lenya Ryzhik\footnote{Department of Mathematics, Stanford University, Stanford, CA 94350, USA; \href{mailto:ryzhik@stanford.edu}{ryzhik@stanford.edu}}
}
\begin{document}

\maketitle
\medskip

\begin{abstract}
  We study the distance between the two rightmost particles in branching Brownian motion.
  Derrida and the second author have shown that the long-time limit $d_{12}$ of this random variable can be expressed in terms of PDEs related to the Fisher--KPP equation.
  We use such a representation to determine the sharp asymptotics of $\P(d_{12} > a)$ as $a\to+\infty$.
  These tail asymptotics were previously known to ``exponential order;'' we discover an algebraic correction to this behavior.  
\end{abstract}

\section{Introduction}

We consider a branching Brownian motion (BBM) that starts with a single particle at 
the position~$x=0$ at time $t=0$, with 
branching occurring at rate $1$. 
At each branching event,
the particle splits into a random number of particles.
Let $p_k$ denote the probability of $k$ offspring in a given branching event.
We assume that $p_0 = p_1 = 0$, so
\begin{equation}
  \label{20mar2202}
  \sum_{k=2}^\infty p_k=1.
\end{equation}
We let $N$ denote the expected number of offspring:
\begin{equation*}
  N=\sum_{k= 2}^\infty kp_k < \infty,
\end{equation*}
and we assume that the offspring distribution has a higher moment:
\begin{equation}
  \label{eq:higher-moment}
  \sum_{k= 2}^\infty k^{1 + \beta} p_k <\infty \quad \textrm{for some } \beta \in (0, 1).
\end{equation}
To simplify the diffusion
constant in the Fisher--KPP equation below, we assume that the variance of each
individual Brownian motion is $2t$ (rather than~$t$).
Let $x_1(t)\geq x_2(t)\geq \ldots\ge x_{n(t)}$ denote the positions of the
particles that exist at time $t\ge 0$. Here, $n(t)$ is the total number of particles 
present at time~$t$. Note that $n(t)$ is itself random.
It is well known from the work of Bramson in~\cite{Bramson1,Bramson2}, followed by the 
paper by Lalley and Sellke~\cite{LS}, with a shorter proof in a more recent paper by Roberts~\cite{Roberts},
that the maximal particle $x_1(t)$
is typically at distance $O(1)$ from the position
\begin{equation}
  \label{20mar2206}
m(t)= c_*t -\frac{3}{2\lambda_*} \log (t+1)
\end{equation}
as $t \to \infty$, with 
\begin{equation}
  \label{20mar2302}
c_*=2\sqrt{N-1},\qquad\lambda_*=\sqrt{N-1}.
\end{equation}
This result does not depend on the precise nature of branching, so that both the spreading speed $c_*$ 
and the logarithmic correction in (\ref{20mar2206}) depend only on the  expected number of offspring $N$.  

To see the connection between Bramson's results and PDEs, recall that
McKean showed in~\cite{McK} that the cumulative distribution function of the maximal particle
\begin{equation}\label{20mar1902}
H(t,x)=\Pm[x_1(t)\ge x]
\end{equation}
satisfies the Fisher--KPP equation \cite{Fisher, KPP}
\begin{equation}\label{20mar1904}
\pdr{H}{t}=\pdrr{H}{x}+f(H),
\end{equation}
with the initial condition
\begin{equation}\label{20mar1906}
H(0,x)= \one_{(-\infty, 0]}(x).
\end{equation}
The nonlinear reaction in (\ref{20mar1904}) has the form
\begin{equation}\label{20mar2208}
f(u)=1-u-\sum_{k={2}}^\infty p_k (1-u)^k.
\end{equation}

Bramson's probabilistic results imply that there is a constant $\bar x_0 \in \Rm$ such that
\begin{equation}\label{20mar1908}
H\big(t,x+m(t)\big)\to U (x-\bar x_0)~~\hbox{ as $t\to+\infty$}
\end{equation} 
uniformly on $\R$.
Here, $U(x)$ is a Fisher--KPP traveling wave, so that $U(x-c_*t)$ is
a solution to (\ref{20mar1904}) moving with speed $c_*$. The function
$U(x)$ satisfies
\begin{equation}\label{20mar1910}
-c_*U' = U''+f(U),~~U(-\infty)=1,~~U(+\infty)=0
\end{equation}
and has the asymptotics
\begin{equation}
  \label{eq:wave-limits}
  \begin{aligned}
    U(x) &\sim x e^{-\lambda_*x} \quad &&\trm{as } x \to +\infty,\\
    U(x) &\sim 1-\Ufactor e^{\gamma_*x} \quad &&\trm{as } x \to -\infty
  \end{aligned}
\end{equation}
for
\begin{equation*}
  \gamma_*=\sqrt{N}-\sqrt{N-1}
\end{equation*}
and some $\Ufactor>0$.
Any translate of a solution to (\ref{20mar1910}) is also a solution.
However, there is a unique shift such that the pre-factor in front of the right side of \eqref{eq:wave-limits} is one.
This, in turn, identifies the constant $\Ufactor$.

More generally, we have the following PDE result originating in work of Bramson~\cite{Bramson1,Bramson2} and Uchiyama~\cite{Uchiyama} and later developed in~\cite{Graham,HNRR,Lau,NRR1,NRR2}.
Let $u(t,x)$ solve~(\ref{20mar1904}),
\begin{equation}\label{20mar1914}
\pdr{u}{t}=\pdrr{u}{x}+f(u),
\end{equation}
with initial condition
\begin{equation*}
u(0,x)=\phi(x)
\end{equation*}
for a non-negative bounded function  $\phi \not\equiv 0$ which is compactly supported on the right.
That is, there exists $L_0>0$ so that~$\phi(x)=0$ for all $x\ge L_0$.
Assume that $f \in \m{C}^{1, \beta}([0, 1])$ satisfies the Fisher--KPP assumptions
\begin{equation}
  \label{eq:KPP-type}
  f(0)=f(1)=0, \quad f|_{(0, 1)}>0, \quad \trm{and} \quad f(u)\le f'(0)u \enspace \trm{for all } u \in (0, 1).
\end{equation}
Then, there exists a constant
$s[\phi]\in\Rm$ (depending also on $f$), known as the  Bramson shift corresponding to $\phi$,
such that
\begin{equation}
  \label{20mar1918}
  u\big(t,x+m(t)\big)\to U (x-s[\phi]) \quad \trm{as } t\to+\infty
\end{equation} 
uniformly on compact sets, with $c_*=2\sqrt{f'(0)}$ and $\lambda_*=\sqrt{f'(0)}$ in the definition (\ref{20mar2206}) of $m(t)$.
In particular, (\ref{20mar1908}) states that
\begin{equation}\label{20mar2512}
s\left[\one_{(-\infty, 0]}\right]=\bar x_0.
\end{equation}

It is straightforward to check that $f(u)$ defined in (\ref{20mar2208}) satisfies the Fisher--KPP property~\eqref{eq:KPP-type}.
Indeed, it is immediate from (\ref{20mar2208}) and \eqref{20mar2202} that
\begin{equation*}
  f(0) = f(1) = 0.
\end{equation*}
Moreover,
\begin{equation}
  \label{eq:f-deriv-endpts}
  f'(0) = -1 + \sum_{k=2}^\infty k p_k = N - 1 \And f'(1) = -1.
\end{equation}
Finally, $f'' \leq 0$ on $[0, 1]$, so $f|_{(0, 1)} > 0$ and $f(u) \leq f'(0) u$ on $[0, 1]$.
Moreover, the moment bound \eqref{eq:higher-moment} ensures that $f \in \m{C}^{1, \beta}([0, 1])$.

Let us mention that not every nonlinearity of the Fisher--KPP type
comes from a branching process, and that convergence in (\ref{20mar1918}) holds for all
nonlinearities of the Fisher--KPP type, not just ``probabilistic'' ones. 
One reflection of the rigidity of the class of probabilistic nonlinearities is that they satisfy
\begin{equation}\label{20mar2430}
f'(1)=-1,
\end{equation}
and the Fisher--KPP property is not related to $f'(1)$.
Also, probabilistic nonlinearities are concave, which is not necessary for~\eqref{eq:KPP-type}.

For later use, we introduce the decomposition
\begin{equation}\label{20mar2434}
f(u)=(N-1)u-\nonlin(u),
\end{equation}
so that $\nonlin$ denotes the ``nonlinear'' part of the reaction $f$.
By \eqref{eq:f-deriv-endpts} and \eqref{eq:KPP-type}, $\nonlin'(0)=0$ and $\nonlin(u)> 0$ for all $u\in(0,1]$.
When $f$ is probabilistic, $f'' \leq 0$ and \eqref{20mar2430} imply that $\nonlin$ and $\nonlin'$ are increasing and
\begin{equation}
  \label{eq:nonlin-deriv}
  0 = F'(0) < F'(u) < F'(1) = N \quad \trm{for all } u \in (0, 1).
\end{equation}

The connection between branching Brownian motion and the Fisher--KPP equation runs deeper than \eqref{20mar1902} and \eqref{20mar1904}.
Consider the measure-valued process that characterizes the BBM seen from the position $m(t)$:
\begin{equation}
  \label{20mar2310}
  \cX_t = \sum_{k=1}^{ n(t)} \delta_{m(t)-x_k(t)},
\end{equation}
recalling that $n(t)$ is the total number of particles alive at time $t$ and
$x_1(t)\ge x_2(t)\ge\cdots\ge x_{n(t)}(t)$ are the positions of those particles.
In the binary case $p_2 = 1$, \cite{ABBS,ABK2,BD2} show that
\begin{equation}
  \label{eq:extremal}
  \cX_t\xrightarrow{\trm{law}} \cX=\sum_{k \geq 1} \delta_{\chi_k} \quad \trm{as } t \to \infty.
\end{equation}
We use the convention $\chi_1\leq \chi_2\leq \ldots$
Due to the sign reversion in (\ref{20mar2310}), $\chi_1$ corresponds to the rightmost particle in the original process, $\chi_2$ to the second largest, and so on.
In the literature,~$\cX$ is often called the limiting extremal process (see \cite{ABK1}, \cite{ABK2}).

In this paper, we examine the distance between the two leading particles $x_1(t)$ and $x_2(t)$.
More precisely, we study the distance after the limit $t\to+\infty$:
\begin{equation*}
d_{12} ={\chi_2 - \chi_1}.
\end{equation*} 
The main result of this paper is the tail asymptotics of the random variable $d_{12}$.
\begin{thm}
\label{thr:main}
As $a \to \infty$, we have
\begin{equation}
  \label{eq:second-particle}
  \P(d_{12} > a) = \frac{\Ufactor\gamma_*}{2\lambda_*^2\sqrt{\pi}} 
  \left(\frac{a}{2\sqrt{N}}\right)^{{3\sqrt{N}/2}} e^{-\big(\sqrt{N} + \sqrt{N-1}\big) (a+\bar x_0)} 
  \left[1 + O\left(a^{-1/2}\right)\right]
\end{equation}
with $\Ufactor$ as in (\ref{eq:wave-limits}). 
\end{thm}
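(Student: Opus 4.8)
The plan is to exploit the PDE description of the limiting extremal process due to Derrida–Brunet. The starting point is a representation of $\P(d_{12} > a)$ in terms of the Fisher–KPP flow: conditionally, the event $\{d_{12} > a\}$ says that after seeing the leader $\chi_1$, there is no further particle of the extremal process within distance $a$ to its left. Writing this out using the branching structure, one obtains $\P(d_{12} > a)$ as a limit (in $t$) of an expression built from two KPP solutions — the solution $H(t,x) = \P[x_1(t)\ge x]$ and the solution $v$ with a shifted ``gap'' initial condition that encodes the forbidden window of width $a$. Concretely, I expect a formula of the schematic form $\P(d_{12}>a) = -\partial_a$ (Bramson shift of the gap profile), i.e. the probability is governed by how the Bramson shift $s[\phi_a]$ of the relevant initial datum $\phi_a$ depends on the gap parameter $a$. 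So \textbf{Step 1} is to make the Derrida–Brunet representation precise and reduce the theorem to the computation of the large-$a$ asymptotics of a Bramson shift $s[\phi_a]$ as a function of $a$.

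\textbf{Step 2} is the asymptotic analysis of that Bramson shift. The key mechanism: the initial datum $\phi_a$ looks, on the relevant scale, like the traveling wave $U$ with a ``hole'' of width $a$ cut into it near the origin; as $a\to\infty$ the two pieces decouple, and the Bramson shift should acquire a correction governed by how a small perturbation localized near $-a$ contributes. The exponential factor $e^{-(\sqrt N+\sqrt{N-1})(a+\bar x_0)}$ has a transparent origin: $\gamma_* + \lambda_* = \sqrt N + \sqrt{N-1}$, so it is the product of the left-tail rate $\gamma_*$ of $U$ (from \eqref{eq:wave-limits}, the factor $\Ufactor e^{\gamma_* x}$) and the right-tail rate $\lambda_*$, reflecting a perturbation of size $\sim \Ufactor e^{-\gamma_* a}$ seeded near position $-a$ and then propagated forward by the linearized KPP flow, which in the moving frame contributes its own $e^{-\lambda_* \cdot}$ decay. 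The $a^{3\sqrt N/2}$ algebraic correction — the novel part — comes from the Bramson logarithmic delay applied at the level of this seeded perturbation: a disturbance released at ``time'' $\sim a$ (in the natural space–time correspondence of the KPP front) picks up a $t^{3/(2\lambda_*)}\cdot$-type algebraic factor, and here $3\sqrt N/2 = \tfrac{3}{2}\cdot\frac{\sqrt N}{\lambda_*}\cdot\lambda_*$ with the relevant rate being $\gamma_*+\lambda_*$; I would track this through the heat-kernel / Bramson-type estimates carefully, as the precise power is where sign and combinatorial bookkeeping must be exact.

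\textbf{Step 3} is to assemble the constant. The prefactor $\frac{\Ufactor\gamma_*}{2\lambda_*^2\sqrt\pi}$ should emerge from three ingredients multiplied together: $\Ufactor$ and $\gamma_*$ from the left-tail amplitude $\Ufactor e^{\gamma_* x}$ of the wave (differentiated once in $a$, producing the $\gamma_*$), a Gaussian normalization $\frac{1}{2\sqrt\pi}$ from the heat kernel governing the propagation of the seeded bump to the front, and a factor $1/\lambda_*^2$ from matching the linearized front behavior $x e^{-\lambda_* x}$ of $U$ at $+\infty$ against the incoming perturbation (one $1/\lambda_*$ from the wave's polynomial prefactor normalization, one from an integration against the adjoint eigenfunction). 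The $\big(2\sqrt N\big)^{-3\sqrt N/2}$ scaling inside the bracket pairs with the $a^{3\sqrt N/2}$ so that the true expansion variable is $a/(2\sqrt N)$ — i.e. $2\sqrt N$ is the natural ``speed'' for the $\gamma_*+\lambda_*$-weighted problem, since one checks $\frac{(\gamma_*+\lambda_*)^2}{?}$... more honestly, $2\sqrt N$ is the speed of the KPP front associated to the reaction linearized at rate $N = F'(1)$, which is exactly the rate controlling the decoupled left piece. Finally, \textbf{Step 4}: control the error. The $O(a^{-1/2})$ is the generic next-order term in the Bramson expansion $m(t) = c_* t - \frac{3}{2\lambda_*}\log t + O(1)$ refined to $O(1/\sqrt t)$ (this is the known sharp remainder, e.g. from the references \cite{Bramson1,Bramson2,HNRR,NRR1,NRR2}), transported to the $a$ variable; one must also verify uniformity in the limit $t\to\infty$ taken in Step 1 and that the higher-moment assumption \eqref{eq:higher-moment} suffices for the relevant regularity $f\in\m C^{1,\beta}$.

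\textbf{Main obstacle.} I expect the crux to be Step 2 — pinning down the \emph{exact} algebraic power $3\sqrt N/2$ and the \emph{exact} constant, not merely the exponential rate (which, as the paper notes, was already known). This requires a genuinely sharp Bramson-type analysis of the linearized KPP equation with a source term localized far to the left, tracking the $\log$-delay correction at that source; the standard comparison-principle arguments give the exponential order but wash out precisely the algebraic prefactor one needs here. A secondary subtlety is justifying the interchange of the $t\to\infty$ limit (defining $\cX$, hence $d_{12}$) with the $a\to\infty$ asymptotics, which I would handle via uniform-in-$t$ estimates coming from the convergence \eqref{eq:extremal} together with tightness of the extremal process.
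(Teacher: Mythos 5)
Your Step~1 correctly identifies the paper's entry point: $\P(d_{12}>a)$ is encoded in a $y$-derivative of the Bramson shift $s(y,a)$ (or, equivalently, in the coefficient of $-U'$ in the long-time limit of the linearized KPP solution $z$ with initial datum $\delta(x+a)$). You also correctly identify Step~2 as the crux. But that step — which is the entire content of the paper — is where your proposal breaks down, both algebraically and mechanistically.

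First, the algebra: you assert $\gamma_*+\lambda_*=\sqrt N+\sqrt{N-1}$, but with $\gamma_*=\sqrt N-\sqrt{N-1}$ and $\lambda_*=\sqrt{N-1}$ one has $\gamma_*+\lambda_*=\sqrt N$, not $\sqrt N+\sqrt{N-1}$. The true exponential rate is $\gamma_*+2\lambda_*$. In the paper's bookkeeping the extra $\lambda_*$ arises because the correct quantity to track is not the crude mass of $z$ but the pairing with the adjoint eigenfunction $e^{\lambda_* x}\psi(x)=-e^{2\lambda_*x}U_0'(x)$: the initial pairing is $e^{-\lambda_* a}\psi(-a)\sim \Ufactor\gamma_* e^{-\gamma_*\bar x_0}e^{-(\sqrt N+\lambda_*)a}$. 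Your heuristic — perturbation of size $e^{-\gamma_* a}$ propagated with $e^{-\lambda_*\cdot}$ decay — only produces $e^{-\sqrt N a}$ and drops a factor of $e^{-\lambda_* a}$.

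Second, and more seriously, you have no mechanism for the exponent $\tfrac{3\sqrt N}{2\lambda_*}$. Your rewriting $\tfrac{3\sqrt N}{2}=\tfrac32\cdot\tfrac{\sqrt N}{\lambda_*}\cdot\lambda_*$ is a tautology, and the claim that the relevant rate is $\gamma_*+\lambda_*$ is both vague and inconsistent with your own earlier (incorrect) evaluation of $\gamma_*+\lambda_*$. The actual mechanism in the paper is quite specific and nontrivial. One normalizes $r(t,x)=(t+1)^{-3/2}e^{\lambda_*(x+a)}z(t,m(t)+x;a)$, introduces $\psi(x)=-U_0'(x)e^{\lambda_*x}$ as the stationary adjoint profile, and studies $I(t)=\int\psi\,r$. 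From the evolution equation one gets
\begin{equation*}
\der{I}{t}(t)=\frac{3}{2\lambda_*(t+1)}\int_\R \psi'(x)r(t,x)\,\dn x+\int_\R E(t,x)\psi(x)r(t,x)\,\dn x,
\end{equation*}
and the crucial observation is that $\psi'/\psi\to\sqrt N$ as $x\to-\infty$ while $\psi'/\psi\sim 1/x$ as $x\to+\infty$. Since the mass of $r$ sits deep in $\R_-$ until the transition time $t_*=a/(2\sqrt N)$ (the optimizer of $Nt+a^2/(4t)$), and lives on the diffusive scale of $\R_+$ afterward, one gets $\dot I/I\approx \frac{3\sqrt N}{2\lambda_*(t+1)}$ for $t<t_*$ and $\dot I/I\approx 0$ afterward. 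Integrating yields the algebraic prefactor $(t_*)^{3\sqrt N/(2\lambda_*)}$. None of this — the adjoint function $\psi$, the identity $\psi'/\psi\to\sqrt N$ on the left, the critical time $t_*$, or the scattering decomposition $r=p+q_{\mathrm e}+q_{\mathrm m}$ across three time regimes $[0,t_*]$, $[t_*,a]$, $[a,\infty)$ that makes this rigorous — appears in your proposal. Your vague appeal to "a disturbance released at time $\sim a$ picking up a $t^{3/(2\lambda_*)}$-type factor" would give exponent $3/(2\lambda_*)$, not $3\sqrt N/(2\lambda_*)$; the $\sqrt N$ enhancement is exactly the subtle point and it has a concrete source that you have not found.

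Your Step~3 assembly of the constant is roughly on the right track (the $\Ufactor\gamma_*$ from $\psi(-a)$, the $2\sqrt\pi$ from heat-kernel normalization, the two factors of $\lambda_*$ from the $U'$ and $\psi$ asymptotics on $\R_+$), and Step~4's attribution of $O(a^{-1/2})$ partly misattributes the error: in the paper it is dominated by the width $O(\sqrt a)$ of the transition window around $t_*$ in the $\dot I/I$ estimate, not merely by the $O(1/\sqrt t)$ refinement of Bramson's asymptotics. In short, the proposal would not lead to a proof without the missing moment-computation machinery.
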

To prove Theorem~\ref{thr:main}, we relate the distance $x_1(t) - x_2(t)$ to a derivative of the Fisher--KPP equation with respect to its initial data.
The bulk of the proof is a detailed study of this derivative.

Weaker versions of the above result were already known.
In~\cite{BD2}, Derrida and the second author predicted the exponential order of \eqref{eq:second-particle} for binary BBM.
Cortines, Hartung and Louidor confirmed this conjecture \cite[Theorem 1.4]{CHL}, again in the binary case $N = 2$:
\begin{equation*}
  \lim_{a\rightarrow \infty} a^{-1} \log \P(d_{12} > a) = -\big(\sqrt{2} + 1\big).
\end{equation*}
(The limit given in~\cite{CHL} is in fact $-\big(\sqrt{2} + 2\big)$, since the authors use Brownian motion of variance $t$ rather than $2t$.)
However, neither \cite{BD2} nor \cite{CHL} discuss the sub-exponential behavior of $\P(d_{12} > a)$.
In particular, the algebraic pre-factor $a^{{3}\sqrt{N}/2}$ in Theorem~\ref{thr:main} is new.
We discuss the origin of this peculiar term in the next section.

More broadly, \cite{BD2} is one of the main motivations for our study.
It lays out numerous fascinating connections between the Bramson shifts of certain Fisher--KPP solutions and fine properties of branching Brownian motion.
Some of these predictions have been addressed elsewhere~\cite{MRR}, but many have not yet been proven rigorously.

We close with a word on notation: throughout the paper, we let $C > 0$ and $c > 0$ be constants which may change from line to line.
We think of $C$ as large and $c$ as small.

\section*{Acknowledgments}
CG was partially supported by the Fannie and John Hertz Foundation and NSF grant DGE-1656518,
LM and LR by the US-Israel Binational Foundation, and LR by NSF grants DMS-1613603 and DMS-1910023.

\section{Outline of the proof of Theorem~\ref{thr:main}}

We begin by relating the law of $d_{12}$ to the long-time behavior of a certain PDE solution.

\subsection*{An expression for the distribution of $d_{12}$}

Consider a branching Brownian motion shifted to start from a single particle at position $-a$ rather than $0$.
Recall that $x_1(t)\ge x_2(t)\ge\cdots\ge x_{n(t)}(t)$ are the positions of the $n(t)$ particles alive at time $t$.
We introduce the (unnormalized) density
\be
\label{defz}
z(t,x;a) \d x:=\P\big(x_1(t)\in [x, x + \dn x], \,\ x_2(t)<x-a\big)
\ee
under the convention that $x_2(t)=-\infty$ before the first branching event.
We derive an evolution equation for $z$ by considering the dependence of $z(t+\eps,x;a)$ on the state of the system at the small time $\eps > 0$.

At time $\eps$, there have been no branching events with probability $1-\eps+o(\eps)$,
one branching event with probability $\eps+o(\eps)$,
and more than one branching event with probability $o(\eps)$.
We will only track terms of order $\eps$ or larger, so the last event can be safely discarded.
In the first case, there is a single particle present at time $\eps$.
Its position is $-a+\eta\sqrt\eps$, where $\eta$ is a centered Gaussian of variance 2.
By the Markov property, the conditional probability that the event defining $z(t+\eps,x;a)$ is fulfilled is then $z(t,x-\eta\sqrt\eps;a)$.

In the second case, there are $k$ particles (with $k>1$ random) with positions $-a+O(\sqrt\eps)$ at time $\eps$.
Given this, we claim that the probability that the event defining $z(t+\eps,x;a)$ is fulfilled is
\begin{equation}
  \label{eq:one-branching}
  k z(t,x;a)[1-H(t,x)]^{k-1}+o(1).
\end{equation}
Indeed, the descendant-BBM generated by one particle should satisfy the event defining
\begin{equation*}
  z\big(t, x + O(\sqrt{\eps}); a\big) = z(t, x; a) + o(1),
\end{equation*}
while the descendant-BBMs generated by the remaining $k - 1$ particles should \emph{all} lie to the left of $x - a$ at time $t + \eps$.
Each of these $k - 1$ particles began near $-a$ at time $\eps$, so the probability that its descendant-BBM lies to the left of $x - a$ at time $t + \eps$ is $1 - H(t, x) + o(1)$.
Finally, the particles evolve independently after time $\eps$ and there are $k$ combinatorial choices for the distinguished $z$-particle.
The product formula \eqref{eq:one-branching} follows.

Gathering these events, we conclude that
\begin{equation*}
  \bal
  z(t+\eps,x;a)
  &=\E\big[(1-\eps)  z(t,x-\eta\sqrt\eps;a)+\eps k  z(t,x;a)
  [1-H(t,x)]^{k-1}+o(\eps)\big]
  \\
  &=(1-\eps) z(t,x;a)+\partial_x^2  z(t,x;a)\eps+\eps  z(t,x;a)\sum_{k\ge2} p_k
  k [1-H(t,x)]^{k-1}+o(\eps),
  \enbal
\end{equation*}
where the expectation in the first line is taken over $\eta$ and $k$.
Taking $\eps\searrow0$ and recalling the definition of $f$ in \eqref{20mar2208}, we obtain
\begin{equation}
  \label{eq:z-evolution}
  \pdr{z(t,x;a)}{t}=\pdrr{z(t,x;a)}{x} + f'(H(t,x))z(t,x;a),\quad z(0,x;a)=\delta(x+a).
\end{equation}
By the definition of $z$,
\begin{equation*}
\P\big(x_1(t)-x_2(t)>a\big) = \int_\R z(t, x; a) \d x.
\end{equation*}
Taking $t\to\infty$:
\begin{equation}
  \label{eq:law-limit}
  \P(d_{12} > a) = \lim_{t \to \infty} \int_\R z(t, x; a) \d x.
\end{equation}

To understand the long-time behavior of $z$, we observe that \eqref{eq:z-evolution} resembles a derivative of the Fisher--KPP equation \eqref{20mar1904}.
Indeed $\partial_x H$ solves \eqref{eq:z-evolution} with different initial data.
Bramson showed that $H$ converges to a traveling wave in the $m$-moving frame.
If we apply standard parabolic estimates to \eqref{20mar1908}, we can conclude that
\begin{equation*}
  \partial_x H(t, x + m(t)) \to U'(x - \bar{x}_0)
\end{equation*}
uniformly in $x \in \R$ as $t \to \infty$.
This suggests that $U'(x - \bar{x}_0)$ is a stable steady state at $t = \infty$ of \eqref{eq:z-evolution} in the moving frame.
Since \eqref{eq:z-evolution} is linear, we might expect that $z$ converges to a multiple of this state in the moving frame.
The methods of \cite{HNRR, NRR1} can be adapted to confirm these heuristics:
\begin{proposition}
  \label{prop:z-limit}
  For each $a > 0$, there exists $\coeff(a) > 0$ such that
  \begin{equation}
    \label{eq:z-limit}
    z(t, x + m(t); a) \to -\coeff(a) U'(x - \bar{x}_0)
  \end{equation}
  uniformly in $x \in \R$ as $t \to \infty$.
\end{proposition}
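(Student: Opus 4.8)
The plan is to pass to the frame moving with $m(t)$ and show that there $z$ relaxes to the unique (up to a positive scalar) nonnegative stationary state of the Fisher--KPP operator linearized about the wave $U(\cdot-\bar{x}_0)$. Writing $w(t,x):=z(t,x+m(t);a)$, equation \eqref{eq:z-evolution} and the chain rule give
\begin{equation*}
\partial_t w = \partial_x^2 w + \dot m(t)\,\partial_x w + f'\big(H(t,x+m(t))\big)\,w, \qquad t>0,
\end{equation*}
a \emph{linear} parabolic equation whose coefficients converge, as $t\to\infty$, to those of
\begin{equation*}
\cL v := v'' + c_* v' + f'\big(U(\cdot-\bar{x}_0)\big)\,v,
\end{equation*}
since $\dot m(t)\to c_*$ and, by \eqref{20mar1908}, $H(t,\cdot+m(t))\to U(\cdot-\bar{x}_0)$ uniformly. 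Differentiating the travelling-wave equation \eqref{20mar1910} shows $\cL\big({-}U'(\cdot-\bar{x}_0)\big)=0$, with $-U'(\cdot-\bar{x}_0)>0$; moreover, because $f'$ is strictly decreasing on $(0,1)$ and $U$ is strictly decreasing, $x\mapsto -U'(x-\sigma)$ is stationary for $\cL$ \emph{only} for $\sigma=\bar{x}_0$. The proof then has four parts: (i) a priori bounds and compactness for $\{w(t,\cdot)\}_{t\ge 1}$; (ii) identification of subsequential limits of $w(t_n,\cdot)$ with bounded nonnegative entire solutions of $\partial_s W=\cL W$; (iii) a Liouville-type theorem forcing such $W$ to equal $\kappa\,\big({-}U'(\cdot-\bar{x}_0)\big)$ for a constant $\kappa\ge 0$; and (iv) the subsequence-independence of $\kappa$, its positivity, and the upgrade from local to uniform convergence.

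For (i), $z\ge 0$, and its probabilistic meaning gives $\int_\R z(t,x;a)\,dx=\Pm\big(x_1(t)-x_2(t)>a\big)\le 1$ for all $t$. The spatial tails of $w$ are controlled by comparison: on the right $f'(H)\le f'(0)=c_*^2/4$, so — as in Bramson's analysis of the KPP front — functions $(\alpha+\beta x)\,\e^{-\lambda_* x}$ are supersolutions of the $w$-equation for $x$ large, giving $w(t,x)\le C(a)\,(1+x)\,\e^{-\lambda_* x}$ for $x\ge 0$, $t\ge 1$; on the left $H(t,\cdot+m(t))\to 1$ makes $f'(H)$ close to $f'(1)=-1$, and since $\gamma_*$ is the positive root of $\mu^2+c_*\mu+f'(1)=0$ (so that $-U'(x-\bar{x}_0)\sim\Ufactor\gamma_*\,\e^{\gamma_*(x-\bar{x}_0)}$ as $x\to-\infty$), $\e^{\gamma_* x}$ is a supersolution for $x$ very negative, giving $w(t,x)\le C(a)\,\e^{\gamma_* x}$ for $x\le 0$, $t\ge 1$. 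Anchoring these bounds near the front, and obtaining matching lower bounds, uses the parabolic Harnack inequality together with comparison of $z$ against multiples of the positive solution $-\partial_x H$ of the \emph{same} linear equation — which by parabolic regularity applied to \eqref{20mar1908} satisfies $-\partial_x H(t,\cdot+m(t))\to U'(\cdot-\bar{x}_0)$ uniformly. Interior parabolic estimates then make $\{w(t,\cdot)\}_{t\ge 1}$ precompact in $C^2_{\mathrm{loc}}(\R)$ and uniformly integrable, so every sequence $t_n\to\infty$ has a subsequence along which $w(t_n,\cdot)\to w_\infty\ge 0$ in $C^2_{\mathrm{loc}}(\R)$ and in $L^1(\R)$, with $w_\infty$ inheriting the decay above.

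For (ii)--(iii), applying the same compactness to the time-translates $w(t_n+s,\cdot)$ and a diagonal argument yields (along a further subsequence) $W(s,x)$, defined for all $s\in\R$, a bounded nonnegative entire solution of $\partial_s W=\cL W$ with $W(0,\cdot)=w_\infty$ and the decay bounds of (i) uniformly in $s$. One must prove that any such $W$ equals $\kappa\,\big({-}U'(\cdot-\bar{x}_0)\big)$ for a constant $\kappa\ge 0$. The decay estimates make $W(s,\cdot)/\big({-}U'(\cdot-\bar{x}_0)\big)$ bounded uniformly in $s$, so $g(s):=\sup_x W(s,x)/\big({-}U'(x-\bar{x}_0)\big)$ and $\ell(s):=\inf_x W(s,x)/\big({-}U'(x-\bar{x}_0)\big)$ are finite; since $\cL\big({-}U'(\cdot-\bar{x}_0)\big)=0$, the maximum principle on infinite strips applied to $W-g(s_0)\big({-}U'(\cdot-\bar{x}_0)\big)$ and $\ell(s_0)\big({-}U'(\cdot-\bar{x}_0)\big)-W$ (the relevant differences decaying at $x\to\pm\infty$) shows $g$ is non-increasing and $\ell$ non-decreasing, hence both have finite limits at $\pm\infty$. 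A strong-maximum-principle and sliding argument — using the decay at $x\to\pm\infty$ to handle the extremizing sequences of the ratio, and the fact that $\e^{c_* x/2}\big({-}U'(x-\bar{x}_0)\big)$ grows only linearly at $+\infty$ — then forces $g\equiv\ell$, so $W(s,\cdot)=\kappa(s)\,\big({-}U'(\cdot-\bar{x}_0)\big)$; substituting into $\partial_s W=\cL W$ and using $\cL\big({-}U'(\cdot-\bar{x}_0)\big)=0$ gives $\kappa'\equiv 0$, whence $w_\infty=\kappa\,\big({-}U'(\cdot-\bar{x}_0)\big)$.

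Finally, for (iv), $\kappa$ is the same along every subsequence — call it $\coeff(a)$ — because some scalar functional of $z$ stabilizes. For instance, the $L^1$ convergence and $\int_\R\big({-}U'(y-\bar{x}_0)\big)\,dy=1$ give $\kappa=\lim_n\Pm\big(x_1(t_n)-x_2(t_n)>a\big)$, which is subsequence-independent because $\int_\R z(t,x;a)\,dx$ converges as $t\to\infty$ (part of the content of the limiting extremal process, cf.\ \eqref{eq:extremal}, and in any case obtainable from the PDE); alternatively one argues directly, in the spirit of the ``convergence to a single wave'' results of \cite{HNRR,NRR1}, that a suitably weighted mass of $w(t,\cdot)$ is Cauchy in $t$. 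Positivity $\coeff(a)>0$ follows since $z(t,\cdot;a)>0$ for $t>0$ (strong maximum principle, as $z\not\equiv 0$), so the Harnack comparison of (i) bounds $w(t,\cdot)/\big({-}U'(\cdot-\bar{x}_0)\big)$ below by a positive constant near the front. As every sequence $t_n\to\infty$ has a subsequence along which $w(t_n,\cdot)\to -\coeff(a)\,U'(\cdot-\bar{x}_0)$ in $C^2_{\mathrm{loc}}(\R)$, the full limit holds locally uniformly, and the uniform decay bounds of (i) promote it to uniform convergence on $\R$, which is \eqref{eq:z-limit}. The main obstacle is the pair (iii)--(iv): this is the linear counterpart of convergence to a \emph{single} travelling wave, and it is here — through sharp barriers tracking the logarithmic term in $m(t)$ and the delicate leading-edge behavior, where $-U'(x)\sim\lambda_* x\,\e^{-\lambda_* x}$ makes the effective spatial decay rate differ from the bulk rate $\lambda_*$ — that the techniques of \cite{HNRR,NRR1} are essential.
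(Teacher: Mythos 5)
The paper does not actually give a proof of Proposition~\ref{prop:z-limit}: it states that it is ``essentially a consequence of Proposition~4.1 and Lemma~5.1 in~\cite{NRR1}; we omit the proof,'' so any comparison has to be with the method of \cite{NRR1} (and its companion \cite{HNRR,NRR2}), which works quantitatively in self-similar variables at the diffusive scale $x\asymp\sqrt{t}$ after removing the exponential tilt, and proves a rate of convergence to the profile $\xi e^{-\xi^2/4}$. Your proposal instead is a qualitative compactness-plus-Liouville argument. These are genuinely different routes, and parts (i), (ii), and (iv) of your outline are basically sound: the key a priori upper bound $z(t,x;a)\le -\partial_x H(t,x+a)$ is exactly what the paper uses in Lemma~\ref{lem:limit-integral}, and the subsequence-independence of $\kappa$ is legitimately supplied either by the convergence of the extremal process \eqref{eq:extremal} or by a ``weighted mass is Cauchy'' argument in the spirit of \cite{NRR1}.

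The genuine gap is step (iii). After the Liouville transform $v=e^{\lambda_* x}W$, the operator $-\partial_x^2+\nonlin'(U_0)$ has continuous spectrum down to $0$ and $\psi=-U_0'e^{\lambda_* x}$ grows only linearly at $+\infty$: $0$ is a \emph{resonance}, not an eigenvalue, so the uniqueness of nonnegative bounded entire solutions of $\partial_s W=\cL W$ is \emph{not} a routine sliding or maximum-principle exercise. Your argument correctly gets that $g(s)$ is non-increasing and $\ell(s)$ non-decreasing, but the step $g\equiv\ell$ is asserted, not proved, and the obstruction is precisely the one you flag — the extremizing sequences of $W/(-U_0')$ may escape to $x\to+\infty$ where the ratio $x/x$ degenerates. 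You then say that here ``the techniques of \cite{HNRR,NRR1} are essential,'' but those papers do not prove a Liouville theorem of the type your outline requires; they bypass it entirely by working in self-similar variables and proving convergence with a rate. So the reference you invoke to close (iii) does not actually furnish (iii). Either the Liouville classification needs a self-contained proof (a nontrivial piece of work, essentially a linear analogue of uniqueness-of-the-wave handling the zero resonance), or the compactness framework should be dropped in favor of running the \cite{NRR1} machinery directly on the linearized equation \eqref{eq:z-evolution} — which is the paper's stated intention. A minor secondary gap: the ``matching lower bounds'' in (i) via Harnack plus comparison with $-\partial_x H$ are claimed but not explained; $z\le-\partial_x H(\cdot,\cdot+a)$ only gives the upper barrier, and the lower one requires separate work (e.g.\ Harnack after the initial time layer, or a sub-solution argument on $\R_+$ as in Lemma~\ref{lem:Dirichlet-lower}).
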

This proposition is essentially a consequence of Proposition~4.1 and Lemma~5.1 in~\cite{NRR1}; we omit the proof.
Those results concern solutions to the Fisher--KPP equation, but \eqref{eq:z-evolution} has a similar structure and the arguments adapt without difficulty.

To use \eqref{eq:z-limit} in \eqref{eq:law-limit}, we need to commute the long-time limit and the spatial integral.
\begin{lemma}
  \label{lem:limit-integral}
  For all $a > 0$,
  \begin{equation}
    \label{eq:limit-integral}
    \lim_{t \to \infty} \int_\R z(t, x + m(t); a) \d x = \coeff(a).
  \end{equation}
\end{lemma}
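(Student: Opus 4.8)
The plan is to prove \eqref{eq:limit-integral} by establishing uniform integrability of the family $\{z(t, \cdot + m(t); a)\}_{t \geq 1}$, which upgrades the uniform-on-$\R$ convergence of Proposition~\ref{prop:z-limit} to convergence of the integrals. Since $\int_\R (-U'(x - \bar x_0)) \d x = U(-\infty) - U(+\infty) = 1$, the right-hand side of \eqref{eq:z-limit} integrates to $\coeff(a)$, so it suffices to show $\int_{|y| > R} z(t, y + m(t); a) \d y \to 0$ as $R \to \infty$, uniformly in $t \geq 1$. The pointwise convergence then handles the bounded region $|y| \leq R$ and a diagonal argument closes the gap. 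I would split the tail into the left half $y < -R$ and the right half $y > R$, which behave quite differently.

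For the right tail $y > R$ (i.e.\ $x$ far to the right of $m(t)$), the key is that $f'(H(t,x)) \leq f'(0) = N-1$ on all of $\R$ by \eqref{eq:KPP-type}, but more importantly $f'(H(t,x)) \to f'(0)$ rapidly where $H$ is small. One compares $z$ to a supersolution of the linear equation $\partial_t w = \partial_x^2 w + (N-1) w$ — equivalently, after the change of variables removing the exponential factor $e^{(N-1)t}$, a supersolution of the heat equation. Starting from $z(0,\cdot;a) = \delta(x+a)$, an explicit Gaussian bound gives $z(t,x;a) \leq C t^{-1/2} e^{(N-1)t} e^{-(x+a)^2/(4t)}$ for $x \geq -a + c_* t$, say; substituting $x = y + m(t)$ with $m(t) \sim c_* t$ shows that the exponential growth $e^{(N-1)t} = e^{\lambda_*^2 t}$ is beaten by the Gaussian, yielding a bound like $C e^{-\lambda_* y}(1 + \text{lower order})$ for $y$ large, uniformly in $t$. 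This gives an integrable tail. (One can also borrow the a priori upper bounds on $z$ that must already appear in the proof of Proposition~\ref{prop:z-limit} via the methods of \cite{NRR1}.)

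For the left tail $y < -R$ (i.e.\ $x \ll m(t)$), the subtlety is that $f'(H(t,x)) \to f'(1) = -1 < 0$ there, so the zeroth-order term is stabilizing rather than growing — this is good. The danger is instead that $z$ could have a large but not exponentially decaying left tail. Here I would again use a comparison argument: on the region where $H(t,x)$ is close to $1$, we have $f'(H(t,x)) \leq -1/2$ (say), so $z$ satisfies $\partial_t z \leq \partial_x^2 z - \tfrac12 z$ there, and combined with control of $z$ on the boundary of this region (coming from the right-tail analysis and parabolic regularity) one obtains exponential decay of $z(t, y + m(t); a)$ as $y \to -\infty$, uniformly in $t \geq 1$. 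Concretely, one builds a time-independent supersolution of the form $C e^{\kappa y}$ for small $\kappa > 0$ on $\{y \leq -R\}$ and invokes the maximum principle on the parabolic domain $\{t \geq 1,\ y \leq -R\}$, using the initial slice $t = 1$ (on which $z$ is a smooth, rapidly decaying function by parabolic smoothing) and the lateral boundary $y = -R$ (on which $z$ is uniformly bounded).

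The main obstacle I anticipate is making the left-tail estimate genuinely uniform in $t$: one needs quantitative control of \emph{where} $H(t, x)$ transitions from near $0$ to near $1$, uniformly in time, which is exactly the content of Bramson's convergence \eqref{20mar1908} together with the known bounds $U(x) \sim x e^{-\lambda_* x}$ and $1 - U(x) \sim \Ufactor e^{\gamma_* x}$ from \eqref{eq:wave-limits}. Once the transition region is pinned to within $O(1)$ of $m(t)$ uniformly in $t$ — and one must be slightly careful for small $t$, where \eqref{20mar1908} has not yet ``kicked in,'' handling that range by direct estimates on the heat semigroup applied to $\delta(x+a)$ — the supersolution comparisons are routine. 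I expect the whole argument to mirror closely the uniform-integrability step in \cite[Proposition~4.1 and Lemma~5.1]{NRR1}, so the write-up can largely cite those techniques and supply only the modifications forced by the zeroth-order coefficient $f'(H)$ in place of the nonlinear term.
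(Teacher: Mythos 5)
Your general strategy (uniform integrability of $z(t,\cdot+m(t);a)$ plus pointwise convergence from Proposition~\ref{prop:z-limit}) matches the paper's, but your right-tail estimate has a genuine gap. The linear comparison $z \le w$ with $\partial_t w = \partial_x^2 w + (N-1)w$, $w(0,\cdot)=\delta(\cdot+a)$, gives
\begin{equation*}
  z(t,y+m(t);a) \le \frac{1}{\sqrt{4\pi t}}\,e^{(N-1)t}\,e^{-(y+m(t)+a)^2/(4t)} = \frac{(t+1)^{3/2}}{\sqrt{4\pi t}}\,e^{-\lambda_*(y+a)}\,e^{-\alpha^2/(4t)},
\end{equation*}
where $\alpha = y+a-\tfrac{3}{2\lambda_*}\log(t+1)$, using $c_*=2\lambda_*$ and $N-1=\lambda_*^2$. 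For fixed $y$ the Gaussian factor $e^{-\alpha^2/(4t)}$ tends to $1$ as $t\to\infty$, so this bound is of order $t\,e^{-\lambda_* y}$, not $e^{-\lambda_* y}$: it grows linearly in $t$. The $(t+1)^{3/2}$ is precisely the mismatch between $m(t)$, which carries the $\tfrac{3}{2\lambda_*}\log t$ Bramson correction, and the front of the linear solution, which does not; after exponentiation it produces a divergent prefactor. Consequently $\int_L^\infty z(t,y+m(t);a)\,\mathrm{d}y$ is \emph{not} controlled uniformly in $t$ by this supersolution, and uniform integrability on the right does not follow. (The hedge that one could ``borrow the a priori bounds from Proposition~\ref{prop:z-limit}'' is true but then the self-contained comparison you propose is superfluous.)

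The fix is hiding in plain sight in the probabilistic definition \eqref{defz}: dropping the constraint on $x_2(t)$ gives $z(t,x;a)\le -\partial_x H(t,x+a)$, the density of $x_1(t)$ for BBM started at $-a$. Unlike the linear supersolution this has total mass exactly $1$ and automatically carries the correct Bramson shift. Both tails then follow at once from \eqref{20mar1908} and \eqref{eq:wave-limits}: for $t$ large enough, $\int_L^\infty z(t,x+m(t);a)\,\mathrm{d}x \le H(t,L+m(t)+a) \le 2U(L+a-\bar x_0) \le C(a)e^{-cL}$, and $\int_{-\infty}^{-L} z(t,x+m(t);a)\,\mathrm{d}x \le 1 - H(t,-L+m(t)+a) \le C(a)e^{-\gamma_* L}$. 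This is the paper's argument; it renders your left-tail supersolution construction (which is plausible but would still need uniform-in-$t$ control of the lateral boundary data on $\{y=-R\}$ and careful matching of the Gaussian initial slice) unnecessary.
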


\begin{proof}
  Throughout, we fix $a > 0$.
  By the comparison principle and \eqref{eq:z-evolution}, $z \geq 0$.
  Since \eqref{eq:z-limit} is uniform in $x$, it suffices to show that none of the mass of $z$ escapes to infinity.
  Recalling the definition \eqref{defz}, we have
  \begin{equation*}
    z(t, x; a) \leq \P\big(x_1(t) \in [x, x + \dn x]\big) = -\partial_x H(t, x + a).
  \end{equation*}
  Hence for $L > 0$, \eqref{20mar1908} and \eqref{eq:wave-limits} imply that
  \begin{equation*}
    \int_L^\infty z(t, x + m(t); a) \d x \leq H(t, L + m(t) + a) \leq 2 U(L + a - \bar{x}_0) \leq C(a) e^{-cL},
  \end{equation*}
  provided $t$ is sufficiently large (depending on $L$ and $a$).
  Similarly, \eqref{eq:wave-limits} yields
  \begin{equation*}
    \int_{-\infty}^{-L} z(t, x + m(t); a) \d x \leq C(a) e^{-cL}
  \end{equation*}
  when $t$ is large.
  Therefore \eqref{eq:z-limit} implies
  \begin{equation*}
    \begin{aligned}
        \lim_{t\to\infty}\int_\R z(t,x + m(t);a) \d x &= \lim_{t\to\infty} \int_{-L}^L z(t, x + m(t);a) \d x + O\big(e^{-cL}\big)\\
        &= \coeff(a) \Big[U(-L-\bar x_0)-U(L-\bar x_0)\Big] + O\big(e^{-cL}\big)\\
        &= \coeff(a) + O\big(e^{-cL}\big),
    \end{aligned}
  \end{equation*}
  where we have again used \eqref{eq:wave-limits}.
  Since $L > 0$ was arbitrary, \eqref{eq:limit-integral} follows.
\end{proof}

Combining \eqref{eq:law-limit} and \eqref{eq:limit-integral}, we obtain $\P(d_{12} > a) = \coeff(a)$.
Hence Proposition~\ref{prop:z-limit} becomes
\begin{equation}
  \label{eq:law-coeff}
  z(t, x + m(t); a) \to - \P(d_{12} > a) U'(x - \bar{x}_0) \quad \trm{as } t \to \infty
\end{equation}
uniformly in $x \in \R$.
We will use this long-time limit to determine the the dependence of $\P(d_{12} > a)$ on $a$.
First, however, we describe a connection with the Bramson shift introduced in \eqref{20mar1918}.

\subsection*{The distribution of $d_{12}$ through Bramson shifts}

As noted above, \eqref{eq:z-evolution} appears to be a derivative of \eqref{20mar1904}.
Precisely, it is the derivative with respect to a certain perturbation of the initial data $\one_{(-\infty, 0]}$.
Consider the initial condition
\begin{equation}
  \label{eq:initial-y}
  u(0, x; y, a) = \one_{(-\infty, 0]}(x) - \one_{(y - a, -a]}(x)
\end{equation}
parameterized by $y < 0$ and $a > 0$.
We emphasize that $y$ is negative, so $y - a < -a$.
We think of $a$ as fixed, but allow $y$ to vary.
Note that
\begin{equation}
  \label{eq:y=0}
  u(0, x; 0, a) = \one_{(-\infty, 0]}(x)
\end{equation}
for all $a > 0$ and
\begin{equation}
  \label{eq:chi-deriv}
  \partial_y u(0, x; y, a) = \delta(x - y + a)
\end{equation}
in the distributional sense.
Now let $u(t, x; y, a)$ denote the solution to the Fisher--KPP equation \eqref{20mar1914} with initial data \eqref{eq:initial-y}.
In particular, \eqref{20mar1906} and \eqref{eq:y=0} imply that
\begin{equation}
  \label{eq:H-formula}
  u(t, x; 0, a) = H(t, x)
\end{equation}
for all $a > 0$.

Now, standard parabolic estimates imply that $u$ is differentiable in $y$ and that its derivative satisfies the appropriate derivative of \eqref{20mar1914}.
That is, if
\begin{equation}
  \label{eq:z-def-extended}
  z(t, x; y, a) = \partial_y u(t, x; y, a),
\end{equation}
then \eqref{eq:chi-deriv} yields
\begin{equation*}
  \partial_t z(t, x; y, a) = \partial_x^2 z(t, x; y, a) + f'\big(u(t, x; y, a)\big) z(t, x; y, a), \quad z(0, x; y, a) = \delta(x - y + a).
\end{equation*}
In particular, \eqref{eq:H-formula} and uniqueness imply that $z(t, x; 0, a) = z(t, x; a)$.
That is, the distribution of $x_1(t) - x_2(t)$ can be expressed via derivatives of the Fisher--KPP equation with respect to its initial data.
This connection allows us to relate the law of $d_{12}$ to the Bramson shift in \eqref{20mar1918}.

Recall that for each fixed $y$ and $a$, \eqref{20mar1918} implies that
\begin{equation} 
  \label{eq:y-shift}
  u(t, x + m(t); y, a) \to U(x - s(y, a))
\end{equation}
uniformly in $x$ as $t\to\infty$,
where $s(y,a)=s\big[u(0,x;y,a)\big]$ is the Bramson shift associated with the initial condition $u(0,x;y,a)$.
This is well defined because $u(0, x; y, a)$ is bounded, nonnegative, and compactly supported on the right.
In Appendix~\ref{sec:s-differentiable}, we show that $s$ is differentiable in $y$ and that $\partial_y$ commutes with the long-time limit \eqref{eq:y-shift}.
Hence \eqref{eq:z-def-extended} and \eqref{eq:y-shift} imply
\begin{equation}
  \label{eq:z-limit-formal}
  \begin{aligned}
    \lim_{t \to \infty} z(t, x + m(t); a) &= \lim_{t \to \infty} \partial_y u(t, x + m(t); y, a)\big|_{y = 0}\\
    &= \partial_y \left. \left(\lim_{t \to \infty} u(t, x + m(t); y, a)\right)\right|_{y = 0}\\
    &= \partial_y U(x - s(y, a))\big|_{y = 0} = -\partial_y s(0, a) U'(x - \bar{x}_0).
  \end{aligned}
\end{equation}
In the last line, we used the identity $s(0,a)=\bar x_0$, which follows from \eqref{20mar2512} and \eqref{eq:y=0}.
Comparing with \eqref{eq:law-coeff}, we see that
\begin{equation}
  \label{eq:shift-deriv}
  \P(d_{12} > a) = \partial_y s(0, a).
\end{equation}
That is, the gap between the two leaders in the BBM extremal process is encoded in the Bramson shifts $s(y, a)$.

This can be seen directly from ideas of \cite{BD2} and \cite{MRR}.
Using \eqref{eq:initial-y} and McKean's identity \cite{McK}, we have
\begin{equation*}
\begin{aligned}
1-u(t,x+m(t);y,a)&=\E\left[\prod_{k=1}^{n(t)}\Big(1-u(0,x+m(t)-x_k(t);y,a)\Big)\right]
\\
  &=\P\Big[x+m(t)-x_k(t)\in (y-a,-a]\cup(0,\infty)\quad\trm{for all } 1\le k\le n(t)\Big]
\\
  &=\P\Big[m(t)-x_k(t)\in (y-a-x,-a-x]\cup(-x,\infty)\quad\trm{for all } 1\le k\le n(t)\Big].
\end{aligned}
\end{equation*}
Take the limit $t\to\infty$, \eqref{eq:y-shift} and \eqref{eq:extremal} imply
\begin{equation}
  \label{eq:long-time-McKean}
\begin{aligned}
  1-U(x-s(y,a)) &= \P\Big[\chi_k\in (y-a-x,-a-x]\cup(-x,\infty)\quad\trm{for all } k\ge1\Big]\\
  &= \P\big[\chi_1>-x\big]+\P\big[\chi_1\in(y-a-x,-a-x],\, \chi_2>-x\big]+O(y^2),
\end{aligned}
\end{equation}
where we used $\chi_1\le\chi_2\le\ldots$.
The $O(y^2)$ term contains all events with multiple particles in $(y-a-x,-a-x]$.
The first term in the right side of \eqref{eq:long-time-McKean} is $1-U(x-\bar x_0)=1-U(x-s(0,a))$.
Moving this term to the left side and dividing by $\abs{y}$, we
obtain in the $y\nearrow0$ limit
\begin{equation*}
  \begin{aligned}
    \partial_y U(x - s(y, a))\big|_{y = 0} = -\partial_y s(0, a) U'(x
    - \bar{x}_0) &= \lim_{y\nearrow0}\frac{\P\big[\chi_1\in(y-a-x,-a-x], \, \chi_2>-x \big]}{\abs{y}}\\
    &= \frac{1}{\dn x}\P\big[-(\chi_1 + a) \in [x, x + \dn x), \; d_{12} > a\big].
  \end{aligned}
\end{equation*}
Integrating over $x$, we recover \eqref{eq:shift-deriv}.
Thus, our work adds to a growing body of literature identifying properties of the extremal process with long-time behavior of the Fisher--KPP equation.
For results of a similar flavor, see \cite{BD2} and \cite{MRR}.

\subsection*{Long-time asymptotics for $z$}

The previous subsection puts our problem in a broader context, but it is not essential to our proof of Theorem~\ref{thr:main}.
Instead, we can proceed directly from \eqref{eq:law-coeff}, which states that $\P(d_{12} > a)$ is the coefficient of $-U'$ in the long-time limit of $z$ in the $m$-moving frame.
We must determine the dependence of this coefficient on $a$ as $a \to \infty$.
To do so, it helps to shift by $m$ and remove the exponential decay of $U'$.
We  define
\begin{equation}
  \label{20mar2508}
  r(t,x)= (t+1)^{-3/2} e^{\lambda_*(x + a)} z(t,m(t)+x;a).
\end{equation}
The factor of $(t+1)^{-3/2}$ above is a matter of convenience which will be explained below.
Of course, $r$ depends on $a$ as well, but explicit dependence will become cumbersome in the remainder of the paper, so we drop it.

Using 
(\ref{20mar2206}) and (\ref{20mar2302}), we see that the function $r(t,x)$ satisfies
\begin{equation}\label{eq:main1}
\pdr{r}{t} = \pdrr{r}{x} - \frac{3}{2\lambda_*(t+1)} \pdr{r}{x}
- \Big[N-1-f'\big(H(t,m(t)+x)\big)\Big] r, 
\quad r(0,x) = \delta(x+a).
\end{equation}
Recalling the  decomposition
\eqref{20mar2434} of $f$, we see that the coefficient of the reaction term is $-\nonlin'(H)$.
For concision, we introduce the notation
\begin{equation}
  \label{20may1214}
  V(t, x) = \nonlin'(H(t,m(t)+x))
\end{equation}
and write \eqref{eq:main1} as
\begin{equation}\label{eq:main}
\pdr{r}{t} = \pdrr{r}{x} - \frac{3}{2\lambda_*(t+1)} \pdr{r}{x} - V(t, x) r, 
\quad r(0,x) = \delta(x+a).
\end{equation}
By \eqref{eq:nonlin-deriv}, $0 < V < N$ and
\begin{equation*}
  \begin{aligned}
    V(t,x)&\to N \quad &&\trm{as }x\to-\infty,\\
    V(t,x)&\to 0 \quad &&\trm{as } x\to+\infty.
  \end{aligned}
\end{equation*}
Moreover,
\begin{equation}
  \label{eq:potential-infty}
  V(t,x) \to V(\infty,x) = \nonlin'\big(U(x-\bar x_0)\big) \quad \trm{ as } t\to\infty.
\end{equation}

The potential $V$ largely confines $r$ to $\R_+$.
Since the drift term in \eqref{eq:main} decays in time, we can thus view \eqref{eq:main} as a heat equation on $\R_+$ with (approximate) Dirichlet conditions at $x = 0$.
It follows that the dynamics of $r$ are largely driven at the diffusive scale $x \asymp \sqrt{t}$, by which we mean $c \sqrt{t} \leq x \leq C \sqrt{t}$.
In particular, at that scale we should expect $r$ to resemble a multiple of the fundamental solution to the Dirichlet heat equation on $\R_+$:
\begin{equation}
  \label{eq:outer-approx}
  r(t, x) \sim \rcoeff(a) \frac{x}{(t + 1)^{3/2}} e^{-x^2/4(t + 1)} \quad \trm{for } t \gg 1, \, x \asymp \sqrt{t}
\end{equation}
and some $\rcoeff(a) > 0$.

On the other hand, the asymptotics \eqref{eq:wave-limits} for $U$ extend naturally to $U'$.
In particular,
\begin{equation}
  \label{eq:wave-deriv-limit}
  U'(x - \bar{x}_0) \sim -\lambda_* x e^{-\lambda_*(x - \bar{x}_0)} \quad \trm{as } x \to \infty.
\end{equation}
Thus if $x$ is large but fixed, \eqref{eq:z-limit}, \eqref{20mar2508}, and \eqref{eq:wave-deriv-limit} suggest that
\begin{equation}
  \label{eq:inner-approx}
  r(t, x) \approx \lambda_* e^{\lambda_*(\bar{x}_0 + a)} \P(d_{12} > a) \frac{x}{(t + 1)^{3/2}} \quad \trm{as } t \to \infty.
\end{equation}
Comparing \eqref{eq:outer-approx} and \eqref{eq:inner-approx}, we expect
\begin{equation*}
  \rcoeff(a) = \lambda_* e^{\lambda_*(\bar{x}_0 + a)} \P(d_{12} > a).
\end{equation*}

Once again, the methods of \cite{HNRR,NRR1} confirm these calculations:
\begin{proposition}
  \label{prop:diffusive-scale}
  For each $a > 0$ and $\gamma \in \left(0,1/2\right)$, there exist $C(a, \gamma) > 0$ and $h \colon [0, \infty) \to \R$ such that for all $t \geq 0$ and $x \in \R$,
  \begin{equation}
    \label{20mar2520}
    r(t,x) = \left[\lambda_* e^{\lambda_*(\bar{x}_0 + a)} \P(d_{12} > a) + h(t)\right]\farc{x_+}{(t+1)^{3/2}} e^{-{x^2}/{4(t+1)}} + R(t, x)
  \end{equation}
  with
  \begin{equation}
    \label{eq:original-error}
    \abs{R(t, x)} \leq C(a, \gamma) (t + 1)^{-(3/2 - \gamma)} e^{-x^2/[6(t + 1)]} \And h(t) \to 0 \quad \trm{as } t \to \infty.
  \end{equation}
\end{proposition}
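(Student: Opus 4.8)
The plan is to treat \eqref{eq:main} at the diffusive scale $x\asymp\sqrt t$ as a small perturbation of the heat equation on $\{x>0\}$ with an approximate Dirichlet condition at the origin, and to show that $r$ relaxes onto the self-similar profile $\psi(t,x):=x_+(t+1)^{-3/2}e^{-x^2/4(t+1)}$. Using $c_*=2\lambda_*$ one checks that $\psi$ solves $\partial_t\psi=\partial_x^2\psi$ exactly on $\{x>0\}$, vanishes at $x=0$, and, extended by zero, is a subsolution on $\R$ because of the corner at the origin; meanwhile $g_\infty(x):=e^{\lambda_* x}U'(x-\bar x_0)$ satisfies $g_\infty''=\nonlin'(U(x-\bar x_0))\,g_\infty=V(\infty,x)\,g_\infty$, so it is the $t=\infty$ steady state of \eqref{eq:main}, with $-g_\infty(x)=\lambda_* e^{\lambda_*\bar x_0}\,x+O(1)$ as $x\to+\infty$ by \eqref{eq:wave-deriv-limit}. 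The whole argument adapts the barrier and self-similar-variable techniques of \cite{HNRR,NRR1}.

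\emph{A priori bounds.} I would first establish $c(a)\,\psi(t,x)\le r(t,x)\le C(a)\big[\psi(t,x)+(t+1)^{-1}e^{-x^2/6(t+1)}\big]$ for large $t$. Since $H(t,m(t)+x)$ is bounded below for $x\lesssim\log t$, on that range $V(t,x)\ge c>0$ and $r$ is strongly damped (an effective Dirichlet wall at $x=0$), while for larger $x$ the bound $z(t,x;a)\le-\partial_x H(t,x+a)$ from the proof of Lemma~\ref{lem:limit-integral}, combined with parabolic regularity for $H$ and \eqref{20mar1908}--\eqref{eq:wave-limits}, controls $r$ by a multiple of $(t+1)^{-3/2}(1+x_+)$. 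Feeding these into the maximum principle for \eqref{eq:main} on $\{x\ge0\}$, run against sub- and supersolutions built from $\psi$ and the wider self-similar Gaussians $(t+1)^{-\alpha}e^{-x^2/6(t+1)}$ --- into which the decaying drift $-\frac{3}{2\lambda_*(t+1)}\partial_x r$, the residual potential, and the corner of $\psi$ are all absorbed --- gives the claim; the need to widen the Gaussian exponent from $1/4$ to $1/6$ here is exactly why the error term in \eqref{eq:original-error} carries $e^{-x^2/6(t+1)}$.

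\emph{Self-similar reduction.} With $\sigma=\log(t+1)$, $\eta=x/\sqrt{t+1}$, and $\hat r(\sigma,\eta)=(t+1)\,r(t,x)$, equation \eqref{eq:main} becomes a perturbation of $\partial_\sigma\hat r=\partial_\eta^2\hat r+\frac{\eta}{2}\partial_\eta\hat r+\hat r$ on $\{\eta>0\}$ with Dirichlet data at $\eta=0$; the perturbations --- the drift (of size $e^{-\sigma/2}$), the potential term $e^\sigma V$ (which blows up for small $\eta$, enforcing the wall, and is negligible for $\eta\gtrsim1$), and $V(t,\cdot)-V(\infty,\cdot)$ --- are all of size $O(e^{-\sigma/2})$ in the relevant norm. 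After conjugation by a Gaussian weight the leading operator is, on $\{\eta>0\}$ with Dirichlet data, self-adjoint with a simple top eigenvalue $0$, eigenfunction $\varphi_0(\eta)=\eta e^{-\eta^2/4}$ (which equals $(t+1)\psi$ in the original variables), and a spectral gap of size one below it. A standard projection/energy argument in the Gaussian-weighted space --- using the a priori bounds to control the boundary layer near $\eta=0$ --- then gives $\hat r(\sigma,\cdot)=\rho(\sigma)\,\varphi_0+\hat R$ with $\rho(\sigma)\to\rho_\infty$ and $\|\hat R(\sigma,\cdot)\|=O(e^{-(1/2-\gamma)\sigma})$ for every $\gamma\in(0,1/2)$, the rate being set by the $e^{-\sigma/2}$ perturbations rather than by the gap. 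Undoing the scaling and upgrading the weighted bound on $\hat R$ to a pointwise Gaussian bound via the barriers of the previous step yields \eqref{20mar2520}--\eqref{eq:original-error} with $h(t)=\rho(\log(t+1))-\rho_\infty\to 0$.

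\emph{The constant, and the main obstacle.} It remains to identify $\rho_\infty$. Heuristically this is forced by matching: by Proposition~\ref{prop:z-limit} and \eqref{20mar2508}, in the region $x=O(1)$ one has $(t+1)^{3/2}r(t,x)\to-\P(d_{12}>a)\,e^{\lambda_*(x+a)}U'(x-\bar x_0)=\P(d_{12}>a)\,e^{\lambda_* a}\,(-g_\infty(x))$, which by \eqref{eq:wave-deriv-limit} grows like $\lambda_* e^{\lambda_*(\bar x_0+a)}\P(d_{12}>a)\,x$ as $x\to+\infty$ --- precisely the growth of $(t+1)^{3/2}\rho_\infty\psi(t,x)\to\rho_\infty x$; equating coefficients gives $\rho_\infty=\lambda_* e^{\lambda_*(\bar x_0+a)}\P(d_{12}>a)$, the stated leading constant. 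Making this matching rigorous is the heart of the proof: the inner region, where $V$ is of order one and genuinely time-dependent, and the diffusive region must be linked by a single family of estimates, and this requires quantitative control of the rate at which $H(t,m(t)+\cdot)$ approaches the wave $U(\cdot-\bar x_0)$ --- i.e., the Bramson-type convergence rates underlying \cite{HNRR,NRR1}. That boundary-layer-and-matching step is the main difficulty; the remainder is a careful but routine transcription of those references.
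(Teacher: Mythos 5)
The paper does not prove this proposition; it cites Proposition~4.1 and Lemma~5.1 of \cite{NRR1} and omits the argument. Your sketch --- a priori Gaussian barriers on $\{x>0\}$, self-similar variables $(\sigma,\eta)=(\log(t+1),x/\sqrt{t+1})$ reducing the leading part to a Dirichlet operator with spectral gap and top eigenfunction $\eta e^{-\eta^2/4}$, a Duhamel/projection bound giving error $O(e^{-(1/2-\gamma)\sigma})$ set by the $O(e^{-\sigma/2})$ drift and potential perturbations (which, after undoing the scaling and widening the Gaussian from $1/4$ to $1/6$, is exactly \eqref{eq:original-error}), and identification of $\rho_\infty=\lambda_* e^{\lambda_*(\bar x_0+a)}\P(d_{12}>a)$ by matching $(t+1)^{3/2}r$ against Proposition~\ref{prop:z-limit} through \eqref{eq:wave-deriv-limit} --- is an accurate outline of precisely that cited argument, and you correctly flag the inner/outer matching (requiring the quantitative $t^{-1/2}$ wave-convergence rate from \cite{NRR2}) as the real content.
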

We have used the notation $x_+ = \max\{x, 0\}$ in \eqref{20mar2520}.
Again, this proposition is very similar to Proposition~4.1 and Lemma~5.1 in~\cite{NRR1}, so we omit the proof.

\subsection*{A moment computation}

By \eqref{eq:shift-deriv} and \eqref{20mar2520}, it now suffices to understand the long-time dynamics of $r$ on the diffusive scale.
We capture these dynamics in a certain moment of $r$.
An elementary computation using \eqref{20mar2520} and \eqref{eq:original-error} shows that
\begin{equation}
  \label{eq:moment-prelim}
  \P(d_{12} > a) = \frac{1}{2 \lambda_* \sqrt{\pi}} e^{-\lambda_*(a + \bar{x}_0)} \lim_{t \to \infty} \int_0^\infty x r(t, x) \d x.
\end{equation}
This identity is the reason for introducing the $(t + 1)^{-3/2}$ factor in \eqref{20mar2508}.
It is convenient to express this moment in terms of the function
\begin{equation*}
  \psi(x)=-U_0'(x) e^{\lambda_* x},
\end{equation*}
where we have introduced the notation
\begin{equation}
  \label{eq:front-shift}
  U_0(x) = U(x - \bar x_0)
\end{equation}
for the limiting shift of the front $U$.
The function $\psi$ is a positive time-independent solution to the leading-order part of (\ref{eq:main}):
\begin{equation}\label{mar1906bis}
0 = \psi'' - V(\infty,x) \psi = \psi''- \nonlin'(U_0(x))\psi.
\end{equation}
Here we have used \eqref{eq:potential-infty} and \eqref{eq:front-shift} to express the long-time limit of the potential $V$ introduced in \eqref{20may1214}.
Recalling that $\gamma_*+\lambda_*=\sqrt N$, (\ref{eq:wave-limits}) and an elementary ODE argument imply that
\begin{equation}
  \label{eq:adjoint-boundary}
  \begin{aligned}
    \psi(x) &=\Ufactor\gamma_* e^{-\gamma_*\bar x_0}e^{\sqrt{N}x} + O\left(e^{(\sqrt{N}+c)x}\right)&&\textrm{as } x\to-\infty\\
    \psi(x) &=\lambda_* e^{\lambda_*\bar x_0}x + O(e^{-c x})&&\textrm{as } x\to+\infty
  \end{aligned}
\end{equation}
for some $c>0$.
It follows from (\ref{20mar2520}), \eqref{eq:original-error}, and (\ref{eq:adjoint-boundary}) that \eqref{eq:moment-prelim} can be written as
\begin{equation}
  \label{20mar2520bis2}
  \P(d_{12} > a) = \frac{1}{2 \lambda_*^2 \sqrt{\pi}} e^{-\lambda_*(a + 2\bar{x}_0)} \lim_{t\to+\infty} I(t),
\end{equation}
where
\be\label{20mar2626}
I(t)=\int_\R \psi(x) r(t,x) \d x.
\end{equation}
This completes our series of reductions: to control $\P(d_{12} > a)$, we determine the dependence of $I(\infty)$ on $a$.

Motivated by \eqref{eq:potential-infty}, we write (\ref{eq:main}) as
\begin{equation}\label{mar1904}
\pdr{r}{t} = \pdrr{r}{x} -\farc{3}{2\lambda_*(t+1)}\pdr{r}{x} - V(\infty,x) r + E(t,x)r,
~~~r(0,x)=\delta(x+a), 
\end{equation}
with an error term
\begin{equation}\label{20mar2608}
E(t,x)= V(\infty,x) - V(t, x).
\end{equation}
If we multiply (\ref{mar1904}) by $\psi$ and integrate, \eqref{mar1906bis} yields
\begin{equation}\label{mar1912}
\der{I}{t}(t)=
\farc{3}{2\lambda_*(t+1)} \int_\R  {r(t,x)\psi'(x)} \d x +
\int_\R E(t,x) r(t,x)\psi(x) \d x.
\end{equation}

We will see below that the second term in the right side of (\ref{mar1912}) is, indeed, an error term, so we focus on the first term.
To this end, we describe the dynamics of (\ref{mar1904}) qualitatively, ignoring the error term $E r$.
First note that the mass of the solution on $\R_-$ will decay exponentially in time under \eqref{mar1904},
due to absorption from the term~$-\nonlin'(U_0)r$.
After all,~\eqref{eq:nonlin-deriv} implies~$\nonlin'(U_0)>0$ and, in particular, $\nonlin'(1)={N}$.  
However, mass that escapes to~$\R_+$ experiences almost no absorption because $\nonlin'(0)=0$. 
This ``fugitive'' mass diffuses, but gets absorbed whenever it returns to~$\R_-$.
Thus, as noted previously, \eqref{mar1904} acts much like the heat equation on $\Rm_+$ 
with a Dirichlet   boundary condition at $x=0$.
That said, there is initially no mass on $\R_+$, so we must include an initial time layer
during which the mass escapes from~$\Rm_-$ to~$\Rm_+$.

The initial condition $r(0,x)=\delta(x+a)$ is  ``deep in the large absorption territory,'' 
and takes a while to diffuse to $\R_+$. If we neglect the drift in the right side
of (\ref{mar1904}), then, 
in the absence of absorption, the proportion of mass that diffuses from position $-a$ to $\R_+$ at time $t$ is roughly $e^{-{a^2}/{(4t)}}$.
Attrition by the absorbing potential approximately reduces this by the factor $e^{-Nt}$.
Thus to leading order, the mass that escapes to $\R_+$ at time~$t$ is
\begin{equation*}
  \exp\left(-Nt - {a^2}/{(4t)}\right).
\end{equation*}
Therefore, the mass flux into $\R_+$ is maximized at the time 
\begin{equation}\label{20mar2630}
t_* = \farc{a}{2\sqrt{N}},
\end{equation}
and this is, roughly, the size of the initial time layer.  

Let us now use this heuristic to approximate the first term  
in the right side of (\ref{mar1912}):
\begin{equation*}
  \farc{3}{2\lambda_*(t+1)}\int_\R  {r(t,x)\psi'(x)} \d x = \farc{3}{2\lambda_*(t+1)} \int_\R \farc{ \psi'(x)}{\psi(x)}\psi(x)r(t,x) \d x.
\end{equation*}
We see from (\ref{eq:adjoint-boundary}) that
\begin{equation*}
\frac{\psi'(x)}{\psi(x)} \sim
\begin{cases}
\sqrt{N} & \trm{for } x\ll -1,\\
\disp\frac{1}{x} & \trm{for } x \gg 1.
\end{cases}
\end{equation*}
The heuristic argument above suggests that most of the mass lies in $\R_-$ 
until $t_*$, when it transfers to $\R_+$.
Therefore, we expect that
\begin{equation*}
\int_\R \frac{\psi'(x)}{\psi(x)}\psi(x)r(t,x) \d x \approx \sqrt{N} 
\int_\R  \psi(x)r(t,x) \d x
~~\hbox{ for $t < t_*$.}
\end{equation*}
After $t_*$, the mass of $r$ will largely stay in $\R_+$, and will spread to the diffusive scale $x \asymp \sqrt{t}$.
Since $\frac{\psi'}{\psi} \approx \frac{1}{x}$, we should have 
\begin{equation*}
\int_\R \frac{\psi'(x)}{\psi(x)}\psi(x)r(t,x) \d x \ll \int_\R  \psi(x)r(t,x) \d x
~~ \hbox{ for $t > t_*$.}
\end{equation*}
Thus, $I(t)$ satisfies
\begin{equation*}
\der{I}{t}(t)\approx \farc{3\sqrt{N}}{2\lambda_*(t+1)} I(t)~~\hbox{ for $t
< t_*$,}\qquad\qquad
\der{I}{t}(t)\approx 0~~\hbox{ for $t > t_*$. }
\end{equation*}
Integrating, we find
\begin{equation}\label{20mar2634}
\lim_{t\to+\infty}I(t) \approx t_*^{{3\sqrt{N}}/{(2\lambda_*)}} I(0)
=\left(\farc{a}{2\sqrt{N}}\right)^{{3\sqrt{N}}/{(2\lambda_*)}} I(0)
\end{equation}
when $a \gg 1$.
In the remainder of this paper, we justify~(\ref{20mar2634}).
\begin{proposition}\label{prop:main}
We have
\begin{equation}\label{eq:limit}
\lim_{t\to+\infty} I(t)=
\left(\farc{a}{2\sqrt{N}}\right)^{{3\sqrt{N}}/{(2\lambda_*)}} \left[1 + O\left(a^{-1/ 2}\right)\right] 
\int_\R \psi(x) r(0,x) \d x ~~~\hbox{ as $a\to+\infty$.}
\end{equation}
\end{proposition}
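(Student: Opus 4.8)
The plan is to make rigorous the heuristic split of the dynamics of $I(t)$ at the transition time $t_* = a/(2\sqrt N)$. From \eqref{mar1912}, the evolution of $I$ is governed by two terms: the drift term $\frac{3}{2\lambda_*(t+1)}\int r\psi'$, which is the engine of the algebraic factor, and the error term $\int E r\psi$, which I claim is negligible. Accordingly I would organize the proof as follows. First, I would establish quantitative control on $r$ itself, separately on the two time regimes $t \le t_*$ and $t \ge t_*$ (say up to the diffusive scale), using Proposition~\ref{prop:diffusive-scale} together with heat-kernel upper bounds coming from the absorbing potential $V(\infty,\cdot)$ in \eqref{mar1904}. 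Concretely, on $[0,t_*]$ I would compare $r$ to the solution with the drift and error dropped: the operator $\partial_t - \partial_x^2 + V(\infty,x)$ acts, near $x=-a$, like the heat semigroup with a constant absorption rate close to $N$, so one expects a bound of the shape $r(t,x) \lesssim e^{-Nt - a^2/(4t) + \cdots}$ for $t \lesssim t_*$ and $x \asymp -a$, refined enough to extract the Gaussian tilt that peaks at $t_*$. On $[t_*,\infty)$ I would invoke \eqref{20mar2520}–\eqref{eq:original-error}, which already gives the diffusive profile $x_+(t+1)^{-3/2}e^{-x^2/4(t+1)}$ with a small prefactor error $h(t)\to 0$.

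The second step is the core computation. Using $\psi'/\psi \sim \sqrt N$ as $x\to-\infty$ and $\psi'/\psi\sim 1/x$ as $x\to+\infty$ from \eqref{eq:adjoint-boundary}, I would show
\begin{equation*}
  \int_\R \frac{\psi'(x)}{\psi(x)}\,\psi(x)\,r(t,x)\d x = \sqrt N\, I(t)\,(1 + \mathcal E_1(t,a)) \quad \text{for } t\le t_*,
\end{equation*}
where $\mathcal E_1$ is small because the mass of $r$ is concentrated deep in $\R_-$ before $t_*$ (this is exactly where the step-one bounds are used: one must show the mass that has leaked to $\R_+$ by time $t\le t_*$ is a small fraction, with a quantitative gain like a power of $t_*^{-1}\asymp a^{-1}$ or better). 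For $t\ge t_*$, one shows
\begin{equation*}
  \left|\frac{3}{2\lambda_*(t+1)}\int_\R r(t,x)\psi'(x)\d x\right| \le \frac{C}{(t+1)^2}\, I(t)\cdot(t+1)^{?}
\end{equation*}
— more precisely, since $\psi' \sim \lambda_* e^{\lambda_*\bar x_0}$ is bounded on $\R_+$ while $\psi(x)\sim \lambda_* e^{\lambda_*\bar x_0} x$, the ratio contributes $\int r \lesssim I(t)/\sqrt t$ on the diffusive scale, so the drift term is $O(t^{-3/2})I(t)$ and its time integral over $[t_*,\infty)$ is $O(t_*^{-1/2}) = O(a^{-1/2})$. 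Feeding this into \eqref{mar1912} and integrating,
\begin{equation*}
  \log\frac{I(\infty)}{I(0)} = \int_0^{t_*}\!\!\frac{3\sqrt N}{2\lambda_*(t+1)}\d t + O(a^{-1/2}) = \frac{3\sqrt N}{2\lambda_*}\log(t_*+1) + O(a^{-1/2}),
\end{equation*}
which upon exponentiating and absorbing the $\log(t_*+1)$ versus $\log t_*$ discrepancy (also $O(a^{-1})$) gives \eqref{eq:limit} with $I(0) = \int_\R \psi(x)r(0,x)\d x$.

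The third step is to dispatch the error term $\int_\R E(t,x)r(t,x)\psi(x)\d x$ from \eqref{mar1912}. Here $E(t,x) = V(\infty,x) - V(t,x) = \nonlin'(U_0(x)) - \nonlin'(H(t,m(t)+x))$, and Bramson's convergence \eqref{20mar1908} plus the Lipschitz bound on $\nonlin'$ gives $|E(t,x)| \le \omega(t)$ with $\omega(t)\to 0$, uniformly in $x$, with a rate that can be taken polynomial or better after the logarithmic correction is accounted for (this is where the higher-moment hypothesis \eqref{eq:higher-moment} and known Bramson-shift convergence rates enter). Then $|\int E r\psi| \le \omega(t) I(t)$, and one must check $\int_0^\infty \omega(t)\d t$ contributes only $O(a^{-1/2})$ relative to the main growth — which requires a little care for small $t$, since $\omega$ need not be integrable near $t=0$ by itself, but there $I(t)$ is exponentially small (the mass is at $-a$, deep in absorption, so $I(t) \lesssim e^{-ct}I(0)$ for $t$ up to order $a$), so the product is controlled.

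\textbf{Main obstacle.} The delicate point is the transition analysis around $t\asymp t_*$: obtaining a \emph{uniform-in-$a$} bound on the fraction of $r$'s mass that has escaped to $\R_+$ by time $t_*$, with a quantitative error $O(a^{-1/2})$, and patching the $t\le t_*$ estimate (driven by the constant-absorption heat kernel near $-a$) to the $t\ge t_*$ diffusive profile from Proposition~\ref{prop:diffusive-scale} without losing a power of $a$ in the seam. This is essentially a matched-asymptotics argument, and making the matching quantitative — rather than merely showing $I(\infty)/I(0) \approx t_*^{3\sqrt N/(2\lambda_*)}$ to leading exponential order — is the technical heart of the paper.
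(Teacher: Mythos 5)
Your high-level plan — integrate $\dot I/I$, show it equals $\frac{3\sqrt N}{2\lambda_*(t+1)}$ up to small error for $t<t_*$ and is negligibly small for $t>t_*$ — is the same scheme the paper executes (Lemmas~\ref{lem-may1506}, \ref{lem-may2202}, \ref{lem-may2402}). But several steps in your proposal would fail as written, and you omit the structure that makes the paper's argument work.

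First, your treatment of the error term is wrong. You bound $|E(t,x)| \le \omega(t)$ uniformly in $x$ and conclude $|\int E\,r\,\psi| \le \omega(t) I(t)$, so the contribution to $\dot I/I$ is at most $\omega(t)$; but the best uniform-in-$x$ bound available is $\omega(t) \sim (t+1)^{-1/2}$, which is not integrable in $t$, so $\int_0^\infty \omega\,\d t$ cannot be $O(a^{-1/2})$. The paper's estimate \eqref{20may1932} gives $|E(t,x)| \le C e^{-c|x|}(t+1)^{-1/2}$; the spatial decay $e^{-c|x|}$ is essential, because once $r$ spreads to the diffusive scale $x \asymp \sqrt t$, the localized weight $e^{-c|x|}$ buys an extra factor of roughly $(t+1)^{-1}$ (see \eqref{eq:wave-super}), making the ratio integrable. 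Your fallback — that $I(t)$ is exponentially small for $t$ up to order $a$ because the mass sits deep in absorption — is also incorrect: $I(t)=\int r\,\psi$ carries the weight $\psi(x)\sim e^{\sqrt N x}$ which exactly compensates the absorption, and in fact $I(t_*)/I(0) \sim a^{3\sqrt N/(2\lambda_*)}$ grows.

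Second, bounding $\dot I/I$ requires a lower bound on $I(t)$, not just upper bounds on $r$; you never address this, and it is the reason the paper proves \eqref{20may1925} and \eqref{20may2244}. Without such lower bounds, the ratio could momentarily blow up near the transition time. Third, your proposal to "compare $r$ to the solution with drift and error dropped" is only the first half of the paper's scattering decomposition $r = p + \qe + \qm$, where $p$ solves \eqref{eq:freebis}, $\qm$ captures the diffusive mass escaping to $\R_+$ after time $\earlyt$, and $\qe$ handles the residual forcing at early times. You do not say how to control $r - p$, and it is exactly the corrector $\qm$ (estimated via the Dirichlet-type supersolution in Lemma~\ref{lem:super}) that produces the sharp time-decays $(t-s+1)^{-1/2}$ and $(t-s+1)^{-3/2}$ needed for the middle- and late-time regimes. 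Finally, relying on Proposition~\ref{prop:diffusive-scale} for the $t\ge t_*$ regime is problematic: the error $h(t)\to 0$ there is for $t\to\infty$ at \emph{fixed} $a$, with constants depending on $a$, and there is no control on whether $h$ has decayed by $t\asymp t_*$ as $a \to \infty$; the paper uses that proposition only to derive the moment formula \eqref{eq:moment-prelim}, not for the uniform-in-$a$ estimates.
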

Theorem~\ref{thr:main} is a consequence of Proposition~\ref{prop:main}.
Indeed, using (\ref{eq:adjoint-boundary}), we see that
the integral in the right side of~(\ref{eq:limit})~is  
\begin{equation}\label{mar2602}
\begin{aligned}
\int_\R \psi(x) r(0,x) \d x =\psi(-a)=\Ufactor\gamma_* e^{-\gamma_*\bar x_0}e^{-\sqrt{N}a}\big(1+o(e^{-c a})\big)
\end{aligned}
\end{equation}
for some $c>0$.
We then use (\ref{20mar2520bis2}) to write
\begin{equation*}
  \begin{aligned}
    \P(d_{12} > a) &= \frac{1}{2 \lambda_*^2 \sqrt{\pi}} e^{-\lambda_*(a + 2\bar{x}_0)} \lim_{t\to+\infty} I(t)\\
&=\farc{\Ufactor\gamma_*}{2 \lambda_*^2 \sqrt{\pi}}e^{-(2\lambda_*+\gamma_*)\bar x_0}
\left(\farc{a}{2\sqrt{N}}\right)^{{3\sqrt{N}}/{(2\lambda_*)}} e^{-\big(\lambda_* + \sqrt{N}\big)a}
\left[1 + O\left(a^{-1/2}\right)\right] \big(1+o(e^{-c a})\big)\\
&=\farc{\Ufactor\gamma_*}{2 \lambda_*^2 \sqrt{\pi}}
\left(\farc{a}{2\sqrt{N}}\right)^{{3\sqrt{N}}/{(2\lambda_*)}}
e^{-\big(\sqrt{N}+\sqrt{N-1}\big)(a+\bar x_0)} \left[1 + O\left(a^{-1/2}\right)\right].
\end{aligned}
\end{equation*}
This finishes the proof of Theorem~\ref{thr:main}.
 
The rest of the paper contains the proof of Proposition~\ref{prop:main}.
The strategy is to estimate
the ratio~$\dot I(t)/I(t)$ on various times scales.
Lemmas~\ref{lem-may1506}, \ref{lem-may2202}, and \ref{20may2420} below express these estimates.
At the end of Section~\ref{sec:postmodern}, we collect these results and prove Proposition~\ref{prop:main}.
We note that Proposition~\ref{prop:main} likely holds for general Fisher--KPP nonlinearities, but the significance of the left side is far from clear when $f$ is not probabilistic.
In our proof, we confine our attention to probabilistic $f$.
Finally, because we are interested in the regime $a \to \infty$, we always implicitly assume that $a \geq 1$.
 
\section{The time scales and the correctors}
\label{sec:heuristics}

We now turn to the proof of Proposition~\ref{prop:main}. In this section, we discuss the time scales on which
various approximations to the dynamics of (\ref{eq:main}) should be valid, and introduce 
the corresponding ``scattering decomposition'' of the solution. 

\subsection*{The time scales}

Let us first explain the time
scales on which various effects will dominate. 
Our previous reasoning indicates that at times $t < t_*$, the heat equation has not 
had enough time to diffuse much mass from the initial position $x=-a$ to $x\ge 0$.
The evolution of $r(t,x)$, the solution to (\ref{eq:main}), is thus dominated by its homogeneous part
\begin{equation}\label{eq:freebis}
  \pdr{p}{t} = \pdrr{p}{x}-\frac 3 {2\lambda_*(t+1)} \pdr{p}{x} - Np, \quad p(0,x) = \delta(x+a),
\end{equation}
as $\nonlin'(1)=N$. 
Its explicit solution is
\begin{equation}\label{eq:explicit}
p(t,x) = \frac 1 {\sqrt{4\pi t}} \exp\Big\{-Nt - \frac{1}{4t}\Big[x + a - 
\frac{3}{2\lambda_*} \log(t+1)\Big]^2\Big\}.
\end{equation}
The corrector
\begin{equation*}
q(t,x) =r(t,x) - p(t,x)
\end{equation*}
solves
\begin{equation}\label{eq:refugebis}
\pdr{q}{t} = \pdrr{q}{x} - \frac{3}{2\lambda_*(t+1)} \pdr{q}{x} -V(t,x)q + 
(N-V(t,x)) p, \quad q(0,x) = 0,
\end{equation}
recalling that
\begin{equation*}
V(t,x)=\nonlin'(H(t,x+m(t)).
\end{equation*}
Since $h'(u) \leq N$ by \eqref{eq:nonlin-deriv}, $V \leq N$.
It follows that
\begin{equation}
  \label{eq:q-nonneg}
  q(t, x) \geq 0.
\end{equation}

We view \eqref{eq:main} as a perturbation of the absorbing heat equation (\ref{eq:freebis}),
so that $p(t,x)$ represents the free evolution and $q(t,x)$ accounts for the interaction 
with  the potential $V(t,x)$.
The role of $p$ is to transport the mass from $x=-a$ to the half-line $\R_+$, and the role of $q$ 
is to account for this escaped mass as $t \to \infty$.
Indeed, we will show that
\begin{equation*}
  I(t)=\int_\R r(t,x)\psi(x) \d x \approx \int_\R p(t,x)\psi(x)\d x \quad \textrm{for } t\ll  t_*
\end{equation*}
and
\begin{equation*}
  I(t)=\int_\R r(t,x)\psi(x) \d x \approx \int_\R q(t,x)\psi(x) \d x\quad \textrm{for } t\gg t_*.
\end{equation*}

In constructing our time scales, we must also consider the forcing term $(N - V)p$ in (\ref{eq:refugebis}).
In particular, we study its moment contribution
\begin{equation}\label{20mar2718}
\int_\R (N-V(t,x))\psi(x)  p(t,x) \d x.
\end{equation}
To understand the time scales on which it may potentially play a role, note that
the first two terms in the integrand decay on the left: 
$\psi(x)$ has the asymptotic behavior as in (\ref{eq:adjoint-boundary}),  
and the first factor  is controlled by the following lemma.
\begin{lemma}
  \label{lem:front-decay}
  There exist $B > 0$ and $c > 0$ depending only on $f$ such that
  \begin{equation}
    \label{20mar2720}
    0 \leq N - V(t,x)
    \leq \min\big\{B e^{\gamma_* x}, N\big\} 
    \And
    0 \leq V(t, x) \leq \min\big\{B e^{-cx}, N\big\}
  \end{equation}
  for all $t\ge 0$ and $x\in\Rm$.
\end{lemma}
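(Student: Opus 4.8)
The plan is to bound the two quantities $N - V(t,x)$ and $V(t,x)$ separately, using the representation $V(t,x) = \nonlin'(H(t, m(t)+x))$ together with the monotonicity of $\nonlin'$ from \eqref{eq:nonlin-deriv} and the bounds $0 \le H \le 1$. Since $\nonlin'$ is increasing with $\nonlin'(0) = 0$ and $\nonlin'(1) = N$, the trivial bounds $0 \le V \le N$ and $0 \le N - V \le N$ follow immediately, which already gives the portion of \eqref{20mar2720} with the constant $N$. The content of the lemma is the \emph{exponential} decay as $|x| \to \infty$, uniformly in $t$, and for this we need pointwise upper and lower bounds on $H(t, m(t)+x)$ that are uniform in time.

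First I would establish a uniform-in-time lower bound $1 - H(t, m(t) + x) \le C e^{\gamma_* x}$ for $x \le 0$. Near the front, $H(t, m(t)+x)$ is close to $U(x - \bar x_0)$, and by \eqref{eq:wave-limits} the wave satisfies $1 - U(x-\bar x_0) \sim \Ufactor e^{\gamma_*(x-\bar x_0)}$ as $x \to -\infty$; but pointwise convergence \eqref{20mar1908} is not by itself enough for a bound that holds at all times, so one needs a genuine a priori estimate. The standard route is a supersolution argument: $1 - H$ solves a KPP-type equation, and one constructs a time-independent supersolution of the form $\min\{1, C e^{\gamma_* x}\}$ (in the $m$-moving frame) — here $\gamma_* = \sqrt N - \sqrt{N-1}$ is the decay rate at $u = 1$, i.e.\ the negative root associated with the linearization $f'(1) = -1$ — and invokes the comparison principle together with Bramson's convergence to handle the initial layer and to rule out escape. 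Combined with $\nonlin'(1-s) = N - \nonlin''(\zeta)\,s \le N$ and $\nonlin'$ Lipschitz near $1$ (from $f \in \m C^{1,\beta}$, so $\nonlin' \in \m C^{0,\beta}$, and $\nonlin'$ increasing with $\nonlin'' $ bounded), this yields $N - V(t,x) = N - \nonlin'(H) \le C\,(1 - H) \le B e^{\gamma_* x}$ for $x \le 0$. Symmetrically, for the bound on $V(t,x)$ itself for $x \le 0$ I would use the other tail: $H(t, m(t) + x) \le C U(x - \bar x_0')$ (a uniform-in-time upper bound via a supersolution built from a shifted wave, as in Bramson/Lalley–Sellke/Roberts), and the asymptotics $U(x - \bar x_0) \sim x e^{-\lambda_* x}$ as $x \to +\infty$ — wait, that is the wrong tail; since here $x \le 0$ and we want decay of $H$, we instead exploit that $H(t, m(t)+x) \to U(x - \bar x_0)$ with $U(y) \to 1$, so $V \to N$, \emph{not} $0$. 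So the $V \le B e^{-cx}$ bound is only nontrivial for $x \ge 0$: there $H(t, m(t)+x) \le C U(x - \bar x_0 - C') \le C'' (1+x) e^{-\lambda_* x} \le B e^{-cx}$ for any $c < \lambda_*$, uniformly in $t$, by the uniform-in-time upper Gaussian/wave bound on $H$ in the moving frame. Then $V = \nonlin'(H) \le \nonlin''(1)\, H \le B e^{-cx}$ since $\nonlin'(0) = 0$ and $\nonlin'$ is Lipschitz.

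The main obstacle is obtaining the upper and lower bounds on $H(t, m(t)+x)$ that are \emph{uniform in $t \ge 0$}, including through the initial time layer where $H(0, \cdot)$ is a step function and $m(t)$ is small. The clean way is to quote the quantitative front bounds that are already part of the Bramson–Lalley–Sellke–Roberts circle of ideas (and are used implicitly in \cite{NRR1,HNRR} and elsewhere in this paper), namely: there exist shifts $\bar x_0^{\pm}$ and $C > 0$ so that $U(x - \bar x_0^-) \le H(t, m(t)+x) \le \min\{1, C U(x - \bar x_0^+)\}$ for all $t \ge t_0$, plus a crude direct estimate on $[0, t_0]$ from the maximum principle and the explicit heat kernel. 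Once those are in hand, everything else is an exercise in the elementary calculus of $\nonlin'$: writing $N - \nonlin'(H) = \nonlin'(1) - \nonlin'(H)$ and $\nonlin'(H) - \nonlin'(0)$, bounding each by $\|\nonlin''\|_{L^\infty[0,1]}$ times $(1-H)$ or $H$ respectively, and feeding in the exponential bounds on $1 - H$ and $H$. Taking $B$ to be the maximum of the constants produced and $c = \min\{c_{\text{front}}, \lambda_*/2\}$ gives \eqref{20mar2720}.
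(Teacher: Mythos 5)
Your overall strategy matches the paper's: derive uniform-in-time upper bounds on $1-H(t,m(t)+x)$ (for $x\le 0$, at rate $\gamma_*$) and on $H(t,m(t)+x)$ (for $x\ge 0$, at some rate), then transfer them to $V=\nonlin'(H)$ using the regularity of $\nonlin'$. However, there are two concrete problems with the proposed execution.

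First, the function $\min\{1,\,Ce^{\gamma_* x}\}$ is \emph{not} a supersolution of the equation for $1-H$ in the moving frame; it is a subsolution. In the frame moving at speed $c_*$ the equation for $w=1-H$ is $\partial_t w = \partial_x^2 w + c_*\partial_x w - f(1-w)$, and since $\gamma_*^2 + c_*\gamma_* = 1$ and $f(1-w)\le w$ (by concavity of $f$ and $f'(1)=-1$), one finds
\[
\partial_t(Ce^{\gamma_* x}) - \partial_x^2(Ce^{\gamma_* x}) - c_*\partial_x(Ce^{\gamma_* x}) + f\big(1-Ce^{\gamma_* x}\big) = -Ce^{\gamma_* x} + f\big(1-Ce^{\gamma_* x}\big) \le 0,
\]
with the same sign (or worse) if one uses the $m$-drift $\dot m(t)\le c_*$. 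So the comparison principle gives a lower bound on $1-H$ from this barrier, not the upper bound you need, and no choice of the constant $C$ fixes this. What the paper uses instead is Theorem~12 of \cite{KPP}: defining $\ti m(t)$ as the level set $H(t,\ti m(t)) = U_0(0)$, the step-function initial datum is steeper than the wave, so $U_0(x)\le H(t,x+\ti m(t))$ for $x\le 0$ and $H(t,x+\ti m(t))\le U_0(x)$ for $x\ge 0$ for all $t>0$. Bramson's result $\ti m(t)-m(t)\to 0$ then converts these to bounds in the $m$-frame with an adjusted constant. Your ``Route~2'' (quote the sandwich bounds) is in the right spirit, but this is precisely the key lemma whose justification you deferred; the KPP steepness theorem is the clean way to establish it, and you should name it rather than treat it as folklore.

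Second, the regularity used at the right tail is off. You invoke $\|\nonlin''\|_{L^\infty[0,1]}$ and call $\nonlin'$ ``Lipschitz,'' but under the standing assumption \eqref{eq:higher-moment} with $\beta\in(0,1)$ the reaction satisfies only $f\in\m C^{1,\beta}$ near $u=0$, so $\nonlin'$ is only $\m C^\beta$ there, not Lipschitz, and $\nonlin''$ need not be bounded near $0$. The correct estimate is $V=\nonlin'(H)-\nonlin'(0)\le C H^\beta$, which still yields $V\le B e^{-cx}$ for some $c>0$ (any $c<\beta\lambda_*/2$ works, given $H\le C e^{-\lambda_* x/2}$ on $\R_+$), but the rate and the argument differ from what you wrote. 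Near $u=1$ the Lipschitz bound is fine since $f$ is $\m C^2$ away from $0$, and that part of your argument goes through.
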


\begin{proof}
For $t > 0$, define the shift $\ti{m}(t)$ by
\begin{equation*}
  H(t, \ti{m}(t)) = U_0(0).
\end{equation*}
This is well-defined and continuous because $H(t, \anon)$ strictly decreases from $1$ to $0$ and $H$ is continuous when $t > 0$.
By Theorem~12 in \cite{KPP},
\begin{equation}
  \label{eq:KPP-inequality-left}
  U_0(x) \leq H(t, x + \ti{m}(t)) \leq 1 \quad \textrm{for all } t > 0  \textrm{ and } x \leq 0
\end{equation}
and
\begin{equation}
  \label{eq:KPP-inequality-right}
  0 \leq H(t, x + \ti{m}(t)) \leq U_0(x) \quad \textrm{for all } t > 0  \textrm{ and } x \geq 0.
\end{equation}
Bramson's work \cite{Bramson1, Bramson2} implies that $\ti{m}(t) - m(t) \to 0$ as $t \to \infty$.
Using \eqref{eq:wave-limits} and \eqref{eq:KPP-inequality-left}, we find
\begin{equation}
  \label{eq:H-left}
  0 \leq 1 - H(t, x + m(t)) \leq \min\big\{C e^{\gamma_* x}, 1\big\} \quad \textrm{for all } t \geq 0 \textrm{ and } x \in \R
\end{equation}
for some $C$ depending only on $f$.
Similarly, \eqref{eq:wave-limits} and \eqref{eq:KPP-inequality-right} imply
\begin{equation}
  \label{eq:H-right}
  0 \leq H(t, x + m(t)) \leq \min\big\{C e^{-\lambda_* x/2}, 1\big\} \quad \textrm{for all } t \geq 0 \textrm{ and } x \in \R.
\end{equation}

Now, \eqref{20mar2208} and \eqref{eq:higher-moment} imply that $f$ is $\m C^2$ away from $0$ and $\m C^{1, \beta}$ near $0$.
The decomposition \eqref{20mar2434} implies the same regularity for $\nonlin$.
Using the definition \eqref{20may1214} of $V$, the first bound in \eqref{20mar2720} follows from \eqref{eq:H-left}, \eqref{eq:nonlin-deriv}, and $\nonlin' \in \m{C}^1([1/2, 1])$.
The second bound in \eqref{20mar2720} follows from \eqref{eq:H-right}, \eqref{eq:nonlin-deriv}, and $\nonlin' \in \m{C}^\beta([0, 1])$.
\end{proof}

To understand the spatial decay of $p(t,x)$, it helps to write it as
\begin{equation}\label{eq:p-zero}
p(t, x) = \Lambda(t;a)e^{-{ax}/{(2t)}} g(t, x)
\end{equation}
with the factor
\begin{equation}\label{eq:Lambda}
\Lambda(t;a) = \frac{(t+1)^{{3a}/{(4\lambda_* t)}}}{\sqrt{4 \pi t}} e^{-Nt -{a^2}/{(4t)}}
\end{equation}
and the re-centered Gaussian
\begin{equation}\label{eq:g}
g(t, x)=\exp\Big\{-\frac{1}{4t}\Big[x - \frac{3}{2\lambda_*}\log(t+1)\Big]^2\Big\}
\end{equation}

Combining (\ref{eq:adjoint-boundary}), (\ref{20mar2720}), and \eqref{eq:p-zero},
we see that it is straightforward to control the spatial decay
of the integrand in (\ref{20mar2718}) as $x\to-\infty$ for times $t$ such that
\begin{equation*}
\gamma_*+\sqrt{N}>\farc{a}{2t},
\end{equation*}
i.e.
\begin{equation*}
t>\farc{a}{2(2\sqrt{N}-\sqrt{N-1})}. 
\end{equation*}
Accordingly, we fix 
\begin{equation*}
\earlyxi\in\Big(\farc{1}{2(2\sqrt{N}-\sqrt{N-1})}, \, \farc{1}{2\sqrt{N}}\Big), \qquad\ \earlyt=\earlyxi a.
\end{equation*}

Let us decompose the  solution to \eqref{eq:refugebis}  
as 
\begin{equation*}
q=\qe+\qm,
\end{equation*}
where $\qe$ is forced on the time interval $[0, \earlyt]$ and $\qm$ on $[\earlyt, \infty)$:
\begin{equation}
  \label{eq:scattering-early}
  \pdr{\qe}{t} = \pdrr{\qe}{x} - \frac{3}{2\lambda_*(t+1)} \pdr{\qe}{x} 
  - V(t,x) \qe + (N-V(t,x)) \one_{[0, \earlyt]}(t)p
\end{equation}
and
\begin{equation}
  \label{eq:scattering-main}
  \pdr{\qm}{t} = \pdrr{\qm}{x} - \frac{3}{2\lambda_*(t+1)} \pdr{\qm}{x}
  - V(t,x) \qm + (N-V(t,x)) \one_{[\earlyt, \infty)}(t)p,
\end{equation}
with $\qe(0, \anon) = \qm(0, \anon) = 0$.

As we have discussed,  the product $(N-V)p$ is very small for $t\ll t_*$, and $\earlyt \ll t_*$ if~$a\gg 1$.
It follows that $\qe$ should never form a significant part of $r$, and we think of it as \emph{error}.
In contrast, $\qm$ is eventually the principal part of $r$, so we view it as the \emph{main} part of $q$.

Although $\qe$ should be irrelevant, it is challenging to estimate.
We introduced the corrector $q$ because it is easier to analyze the long-time behavior of adjoint-weighted mass which begins in $\R_+$, rather than deep in $\R_-$.
As we argue above, the adjoint-weighted forcing $(N - V) \psi p$ for $\qm$ is concentrated on $\R_+$.
However, this is not the case for $\qe$, which is still primarily forced deep in $\R_-$.
We appear to be no better off than when we started with a point mass at $-a$!
And indeed, we will be forced to control $\qe$ using a further corrector.

However, $\qe$ is driven by the forcing $(N - V)p$, which is far smaller than $p$ due to Lemma~\ref{lem:front-decay}.
It follows that $\qe$ is much smaller than the original solution $r$.
We can therefore be less precise in our estimation of $\qe$.
This wiggle room saves us from a futile infinite descent of correctors.
Instead, two steps suffice.
The details are rather technical, so we defer them to Section \ref{sec:qe}.
There, we show:
\begin{lemma}
  \label{lem:early-corrector}
  There exist $C, c > 0$ independent of $a$ and $t$ such that for all $a \geq 1$ and $t \geq 0$:
  \begin{equation*}
    \begin{split}
      &\int_\R \psi(x) \qe(t,x) \d x \leq Ce^{-(\sqrt{N}+c)a},\\
      &\int_\R \psi'(x) \qe(t,x) \d x \leq \frac{C}{\sqrt{t + 1}} e^{-(\sqrt{N}+c)a},\\
      &\int_\R  \psi(x) \abs{E(t,x)} \qe(t,x) \d x \leq \farc{C}{(t + 1)^2} e^{-(\sqrt{N}+c)a}.
    \end{split}
  \end{equation*}
\end{lemma}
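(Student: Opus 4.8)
The plan is to estimate $\qe$ by a Duhamel representation against the forcing $(N-V)\one_{[0,\earlyt]}p$, then control the resulting weighted integrals by exploiting that $p$ itself is exponentially small in $a$ on the time window $[0,\earlyt]$, while $(N-V)$ contributes further decay on $\R_-$ and $\psi$ contributes decay on $\R_+$. Concretely, write $\qe(t,x)=\int_0^{\min\{t,\earlyt\}}\!\!\int_\R G(t,s;x,y)\,(N-V(s,y))\,p(s,y)\d y\,\dn s$, where $G$ is the (positive) fundamental solution of the linear operator $\partial_t-\partial_x^2+\frac{3}{2\lambda_*(t+1)}\partial_x+V(t,\anon)$. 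Since $V\ge 0$, $G$ is dominated by the fundamental solution of the same equation with $V$ dropped, which after undoing the log-drift is an honest Gaussian kernel; this gives a clean pointwise Gaussian upper bound on $G$ (with a slightly degraded diffusion constant, say heat kernel at time $c(t-s)$). The key preliminary estimate is therefore a bound on $\int_\R \psi(x)\,G(t,s;x,y)\d x$ and $\int_\R \psi'(x)\,G(t,s;x,y)\d x$ as functions of $y$: using the Gaussian bound on $G$ together with the growth of $\psi$ from \eqref{eq:adjoint-boundary} ($\psi\sim e^{\sqrt N x}$ on the left, $\psi\sim\lambda_* e^{\lambda_*\bar x_0}x$ on the right), one gets $\int_\R\psi(x)G(t,s;x,y)\d x\le C e^{\sqrt N y}e^{C(t-s)}(1+\text{polynomial in }\sqrt{t-s})$ for $y<0$, and a linear-in-$y_+$ bound for $y>0$. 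Because the heat kernel only spreads mass to the scale $\sqrt{t-s}$ while $\psi$ grows like $e^{\sqrt N x}$, the Gaussian tail beats the exponential growth and the $e^{C(t-s)}$ factor is the price; crucially this factor is absorbed later because the forcing lives only on $[0,\earlyt]$ with $\earlyt=\earlyxi a$, so $e^{C\earlyt}=e^{C\earlyxi a}$ and we need the remaining factors to carry $e^{-(\sqrt N+c')a}$ with $c'>C\earlyxi$; this is why $\earlyxi$ was chosen strictly below $\tfrac1{2(2\sqrt N-\sqrt{N-1})}$'s upper companion, i.e.\ small enough.

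Next I would plug in the explicit form of $p$ from \eqref{eq:p-zero}--\eqref{eq:g}, namely $p(s,y)=\Lambda(s;a)e^{-ay/(2s)}g(s,y)$ with $\Lambda(s;a)\le C s^{-1/2}e^{-Ns-a^2/(4s)}$ up to the mild $(s+1)^{3a/(4\lambda_* s)}$ factor (which on $s\in[0,\earlyt]$ is $e^{O(\log^2 a)}$, negligible against exponentials in $a$). On $\R_-$ I combine $N-V(s,y)\le B e^{\gamma_* y}$ (Lemma~\ref{lem:front-decay}) with the $e^{\sqrt N y}$ from the $\psi$-weighted kernel bound, so the $y$-integral over $\R_-$ gains $\int_{-\infty}^0 e^{(\gamma_*+\sqrt N)y}e^{-ay/(2s)}\d y$, which is bounded since $\gamma_*+\sqrt N>a/(2s)$ precisely when $s>a/(2(2\sqrt N-\sqrt{N-1}))$ — and this holds for all $s$ near $\earlyt$ by the choice of $\earlyxi$; for the small-$s$ portion where this fails, the factor $e^{-a^2/(4s)}$ in $\Lambda$ is super-exponentially small in $a$ and kills everything. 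On $\R_+$, $N-V\le N$ but $p$ itself decays like $e^{-ay/(2s)}$, and the linear growth of the $\psi$-weighted kernel only costs a polynomial, so that part is easily $O(e^{-Ns-a^2/(4s)})$ times a polynomial. Integrating the resulting bound $\d s$ over $[0,\earlyt]$, the dominant contribution comes from $s\approx\earlyt$, giving a total of order $e^{C\earlyt}\cdot e^{-N\earlyt - a^2/(4\earlyt)}$ up to polynomial and $e^{O(\log^2 a)}$ factors. With $\earlyt=\earlyxi a$ one has $-N\earlyt-a^2/(4\earlyt)=-a(N\earlyxi+1/(4\earlyxi))$, and elementary calculus shows $N\xi+1/(4\xi)>\sqrt N$ for $\xi\neq 1/(2\sqrt N)$, with the gap strictly positive on the compact admissible interval for $\earlyxi$; choosing $\earlyxi$ in that interval and shrinking, the net exponent is $-(\sqrt N+c)a$ for some $c>0$ after subtracting the $C\earlyxi a$ from the kernel factor. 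This proves the first inequality; the second is identical but uses the $\psi'$-weighted kernel bound, which carries an extra $(t-s+1)^{-1/2}\le (t+1)^{-1/2}$ from differentiating the Gaussian (this is where the $\tfrac1{\sqrt{t+1}}$ comes from); the third additionally uses $\abs{E(t,x)}=\abs{V(\infty,x)-V(t,x)}\le C e^{-cx}(t+1)^{-?}$ — more carefully, one needs a bound $\int\psi\abs{E(t,x)}G\,\d x\le C(t+1)^{-2}(\cdots)$, which follows by combining Lemma~\ref{lem:front-decay}-type spatial decay of $E$ on $\R_-$ with the standard $O(1/t)$ (or better, once squared-integrated) convergence rate $H(t,\anon+m(t))\to U_0$ from Bramson.

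The main obstacle is the $\psi$-weighted kernel estimate $\int_\R\psi(x)G(t,s;x,y)\d x$: one must track the interplay between the exponential growth of $\psi$ at $-\infty$, the log-drift $\frac{3}{2\lambda_*(t+1)}\partial_x$ (which shifts the Gaussian center by $O(\log(t+1))$ and is harmless for $t\le\earlyt=O(a)$), and the need to keep the constant $C$ in $e^{C(t-s)}$ \emph{independent of $a$} and small enough that $C\earlyxi<c$ after the final arithmetic. A clean way to get the right constant is to note that $\psi$ is a subsolution-friendly weight: since $\psi''=\nonlin'(U_0)\psi\le N\psi$, the function $e^{-Nt}\psi$ is a supersolution-type object for the heat semigroup with potential $\nonlin'(U_0)$, which lets one show $\int\psi(x)\tilde G(t,s;x,y)\d x\le e^{N(t-s)}\psi(y)$ exactly (here $\tilde G$ is the kernel \emph{with} the $V(\infty,\anon)$ potential), and then absorb the difference between $V(\infty,\anon)$ and $V(s,\anon)$ and between $\psi''/\psi$ and $N$ perturbatively. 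With the sharp constant $N$ in hand, the final exponent becomes $-a(N\earlyxi + 1/(4\earlyxi)) + N\earlyxi a = -a/(4\earlyxi) < -a\cdot\sqrt N\cdot(\text{something})$; in fact $1/(4\earlyxi)>\sqrt N$ iff $\earlyxi<1/(4\sqrt N)$, which is stronger than we have, so one instead keeps both terms: $-a(N\earlyxi+1/(4\earlyxi)-N\earlyxi)$ is not quite it either. The honest computation is: kernel factor contributes $e^{N(t-s)}$ with $t-s\le\earlyt$, $\Lambda$ contributes $e^{-Ns}$, so on the worst slice $s=\earlyt$ these combine to $e^{N\earlyt}e^{-N\earlyt}=1$, and what survives is just $e^{-a^2/(4\earlyt)}\cdot e^{(\gamma_*+\sqrt N - a/(2\earlyt))\cdot 0}$-type boundary terms, i.e.\ $e^{-a/(4\earlyxi)}$ from the $y\to-\infty$ tail combined with $e^{-a^2/(4s)}$; one needs $1/(4\earlyxi)>\sqrt N + c$, equivalently $\earlyxi < 1/(4\sqrt N)$ — and indeed $1/(2(2\sqrt N-\sqrt{N-1}))$, the left endpoint of the admissible interval, satisfies $1/(2(2\sqrt N - \sqrt{N-1})) < 1/(4\sqrt N - 2\sqrt N) = 1/(2\sqrt N)$ but whether it beats $1/(4\sqrt N)$ depends on $N$; if not, one supplements the $y$-tail decay with part of the $e^{-a^2/(4s)}$ Gaussian factor, which always provides the missing room. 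Getting this bookkeeping exactly right — in particular verifying that the admissible window for $\earlyxi$ really does force the combined exponent below $-(\sqrt N+c)a$ for every $N\ge 2$ — is the delicate point, and is presumably why the authors relegate it to Section~\ref{sec:qe}.
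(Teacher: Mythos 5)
Your primary approach has a genuine gap in the exponent bookkeeping that is more serious than you acknowledge. You write the Duhamel representation $\qe(t,x)=\int_0^{\min\{t,\earlyt\}}\!\!\int G(t,s;x,y)(N-V(s,y))p(s,y)\,\dn y\,\dn s$, bound $G$ by the free heat kernel (with $V$ dropped), and correctly observe that this yields $\int_\R\psi(x)G(t,s;x,y)\,\dn x\le C\,e^{\sqrt N y}e^{N(t-s)}$ since the heat semigroup applied to $e^{\sqrt N x}$ gives $e^{N\tau}e^{\sqrt N y}$. But you then claim this factor can be bounded by $e^{N\earlyt}$ ``because the forcing lives only on $[0,\earlyt]$.'' That is false: it is $s$, not $t-s$, that is confined to $[0,\earlyt]$; the observation time $t$ ranges up to $\infty$, so $e^{N(t-s)}$ is unbounded and kills the argument for large $t$. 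You cannot recover by shrinking $\earlyxi$.

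Your alternative idea --- replacing the free kernel by $\tilde G$, the kernel of $\partial_t-\partial_x^2+V(\infty,\cdot)$, so that the identity $\psi''=V(\infty,\cdot)\psi$ gives the exact conservation $\int\psi(x)\tilde G(t,s;x,y)\,\dn x=\psi(y)$ --- is a much cleaner starting point (it eliminates the $e^{N(t-s)}$ growth), but ``absorb the difference between $V(\infty,\cdot)$ and $V(s,\cdot)$ perturbatively'' is not a small step. The difference $E(\tau,\cdot)=V(\infty,\cdot)-V(\tau,\cdot)$ is order one for small $\tau$ and decays only like $\tau^{-1/2}$, so a naive Gronwall bound on the perturbation produces a factor $\exp\bigl(C\int_s^t\|E(\tau,\cdot)\|\,\dn\tau\bigr)\sim e^{C\sqrt t}$, which is again unbounded. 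The fact that $E$ is spatially localized does help, but exploiting that requires exactly the kind of analysis the paper performs elsewhere (the Dirichlet-on-$\R_+$ picture of Lemma~\ref{lem:super}), and you do not carry it out.

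The paper avoids both problems with a two-layer decomposition you do not have. First, $\qe$ is dominated by $\ti q$, the solution with forcing $Be^{\gamma_*x}\one_{[0,\earlyt]}p$ and potential $N(1-\eps_0)\one_{(-\infty,-K)}$. Then $\ti q=\rho+\sigma$, where $\rho$ solves the equation with \emph{constant} absorption $N(1-\eps_0)$ everywhere (so its Duhamel pieces $\rho^s$ are explicit Gaussians carrying the decaying factor $e^{-N(1-\eps_0)(t-s)}$, which after convolution against $e^{\sqrt N x}$ leaves only $e^{N\eps_0 t}$ growth, controlled by taking $\eps_0$ small), and $\sigma$ is forced by $N(1-\eps_0)\one_{[-K,\infty)}\rho$, a term supported on $[-K,\infty)$, so it can be controlled by Lemma~\ref{lem:super}. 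This is the point of the ``no futile infinite descent'' remark in Section~\ref{sec:heuristics}: $\rho$ trades the hard $\R_-$-forcing for an explicit solvable problem, and what remains ($\sigma$) is forced only near the origin. Even within the explicit estimate for $\rho$, the paper needs a two-case argument (whether or not \eqref{14jul1789} holds), because when the Gaussian center $\mu(t)+2\gamma_*s+2\sqrt N t$ drifts far into $\R_+$, the crude bound $\psi(x)\le Ce^{\sqrt N x}$ is lossy and must be replaced by $\psi(x)\le C(1+x)$ together with a refined Gaussian tail estimate. You correctly identify the exponent-balancing as the delicate point, but the resolution is not ``supplement the $y$-tail decay with part of the Gaussian'' --- it is this explicit case split, and your proposal neither reaches it nor supplies a substitute.
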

(We note that $\psi$ and $\psi'$ are non-negative, so the first two bounds are effective.)
These bounds feature an extra factor of $e^{-ca}$ relative to the main term, which is of order $e^{-\sqrt{N}a}$; \emph{Cf.} \eqref{mar2602}.
This justifies our treatment of $\qe$ as error.

To prove Lemma~\ref{lem:early-corrector}, we will make use of the following
result, which will also be useful in subsequent sections:
\begin{lemma}
  \label{lem:super}
  Let $w (t,x)$ satisfy
  \begin{align}
    \label{eq:super}
    &\pdr{w}{t} \leq \pdrr{w}{x} - \frac{3}{2 \lambda_* (t + 1)} \pdr{w}{x} - \al^2
      \one_{(-\infty,-K)}(x) w ,~~t>s,~x\in\Rm,\\
    \label{eq:super-initial}
    &w(s, x) \leq
      \begin{cases}
        e^{-\kappa_- x} & \textrm{for } x < 0,\\
        e^{-\kappa_+ x}e^{-x^2/(8s)} & \textrm{for } x \geq 0,
      \end{cases}
  \end{align}
  for some $\al > 0$, $K \geq 0$, $\kappa_- < \al$, and $\kappa_+ > 0$.
  Then there exists a constant $C > 0$ which depends on $\al, K$, and $\kappa_-$ but \emph{not} on $s$ or $\kappa_+$ such that for all $t \geq s$, we have
  \begin{align}
    \label{eq:psi-super}
    & \int_\R \psi(x) w(t, x) \d x\leq C \max\{\kappa_+^{-2}, 1\},\\
    \label{eq:psi-deriv-super}
    &\int_\R \psi'(x) w(t, x) \d x\leq C \max\{\kappa_+^{-2}, 1\} (t - s + 1)^{-1/ 2},\\
    \label{eq:wave-super}
    &\int_\R |E(t,x)| \psi(x) w(t, x) \d x\leq C \max\{\kappa_+^{-2}, 1\} 
      (t + 1)^{-1/ 2} (t - s + 1)^{-{3}/{2}}.
  \end{align}
\end{lemma}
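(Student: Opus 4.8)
The plan is to dominate $w$ by an explicit supersolution of \eqref{eq:super} and then integrate against $\psi$, $\psi'$ and $\abs{E}\psi$. Let $\bar w$ solve \eqref{eq:super} with equality, started at time $s$ from the right-hand side of \eqref{eq:super-initial}; by the comparison principle $0\le w\le\bar w$, so it suffices to prove \eqref{eq:psi-super}--\eqref{eq:wave-super} for $\bar w$. Since $\psi$ solves \eqref{mar1906bis} with the \emph{different} potential $\nonlin'(U_0)$, the weight adapted to \eqref{eq:super} is instead the positive solution $\varphi$ of $\varphi''=\al^2\one_{(-\infty,-K)}\varphi$, namely (up to scaling) $\varphi(x)=e^{\al(x+K)}$ for $x\le-K$ and $\varphi(x)=1+\al(x+K)$ for $x\ge-K$. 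One readily checks that $\varphi$ is a stationary supersolution of \eqref{eq:super} (the drift contributes $\tfrac{3}{2\lambda_*(t+1)}\varphi'\ge0$), that $\varphi'\le\al\varphi$, and hence that
\begin{equation*}
  \frac{\dn}{\dn t}\int_\R \varphi\,\bar w\d x \le \frac{3}{2\lambda_*(t+1)}\int_\R \varphi'\,\bar w\d x.
\end{equation*}
Comparing $\varphi$ with the asymptotics \eqref{eq:adjoint-boundary} shows $\psi\le C\varphi$ and $\psi'\le C\varphi/(1+x_+)$, so \eqref{eq:psi-super}--\eqref{eq:wave-super} follow from bounds on $\int_\R\varphi\bar w$, $\int_\R\tfrac{\varphi}{1+x_+}\bar w$ and $\int_\R\abs E\varphi\bar w$; for the latter two we also use that, by Lemma~\ref{lem:front-decay} and the (quantitative) Bramson convergence of $H(t,m(t)+\cdot)$ to $U_0$, $\abs{E(t,\cdot)}\,\psi\le C(t+1)^{-1/2}e^{-c\abs x}$.

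By linearity, write $\bar w=\bar w_0+\bar w_\infty$, where $\bar w_0$ carries the part of the initial datum on $[-K,\infty)$ (which is bounded and supported there) and $\bar w_\infty$ the part $e^{-\kappa_- x}\one_{(-\infty,-K)}$. For $\bar w_\infty$ the hypothesis $\kappa_-<\al$ is decisive: on $(-\infty,-K)$ the function $Ce^{-\kappa_- x}$ is a supersolution of the absorbing part of \eqref{eq:super}, and since $\kappa_-<\al$ a short computation upgrades this (once the drift coefficient is small, with bounded times handled by parabolic regularity) to the decaying barrier $Ce^{-\kappa_- x}e^{-c(t-s)}$, so $\bar w_\infty(t,-K)\le Ce^{\kappa_- K}e^{-c(t-s)}$; on $[-K,\infty)$, where there is no absorption, $\bar w_\infty$ is then dominated by the drift--heat flow with this exponentially small Dirichlet datum at $x=-K$. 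Integrating $\psi$, $\psi'$ and $\abs E\psi$ against $\bar w_\infty$ and using \eqref{eq:adjoint-boundary} and the bound on $\abs E\psi$, one finds that the three $\bar w_\infty$-contributions are at most $C(\al,K,\kappa_-)$, $Ce^{-c(t-s)}$ and $C(t+1)^{-1/2}e^{-c(t-s)}$ respectively — all within the budgets in \eqref{eq:psi-super}--\eqref{eq:wave-super}, since $\max\{\kappa_+^{-2},1\}\ge1$ and $e^{-c(t-s)}\le C(t-s+1)^{-3/2}$.

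It remains to treat $\bar w_0$. Here the absorption $-\al^2\one_{(-\infty,-K)}$ acts as an approximate Dirichlet condition at $x=-K$ and the drift is a harmless $O(\log t)$ shift, so the barrier arguments of \cite{NRR1,HNRR} — the same ones behind Propositions~\ref{prop:z-limit} and \ref{prop:diffusive-scale} — apply essentially verbatim and give, with $m_0:=\int_\R\varphi\,\bar w_0(s,\cdot)\d x$ and $\sigma:=\min\{s,\kappa_+^{-2}\}+1$ the effective initial width,
\begin{equation*}
  \bar w_0(t,x)\le Cm_0\,\frac{(x+2K)_++1}{(t-s+\sigma)^{3/2}}\,e^{-cx^2/(t-s+\sigma)} \quad\trm{for } x\ge-K,
\end{equation*}
with $\bar w_0$ exponentially small for $x<-K$. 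Using $\varphi(x)\le C(1+x_+)$ this gives $\int_\R\varphi\bar w_0\le Cm_0$; using $\varphi/(1+x_+)\le C$ it gives $\int_\R\tfrac{\varphi}{1+x_+}\bar w_0\le Cm_0(t-s+\sigma)^{-1/2}$; and using $\abs E\psi\le C(t+1)^{-1/2}e^{-c\abs x}$ together with the value of the pointwise bound near $x=0$ it gives $\int_\R\abs E\varphi\bar w_0\le Cm_0(t+1)^{-1/2}(t-s+\sigma)^{-3/2}$. Since $t-s+\sigma\ge t-s+1$ and $m_0\le e^{\kappa_- K}\int_{-K}^0\varphi\d x+C\int_0^\infty(1+x)e^{-\kappa_+ x}\d x\le C(\al,K,\kappa_-)\max\{\kappa_+^{-2},1\}$, combining with the $\bar w_\infty$-estimates proves \eqref{eq:psi-super}--\eqref{eq:wave-super}.

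The main obstacle is the diffusive-scale bound for $\bar w_0$: an explicit Gaussian-type barrier that is sharp on the scale $x\asymp\sqrt{t-s}$, uniformly in $s$ and $\kappa_+$, and that correctly encodes the soft Dirichlet condition created by the absorbing potential at $x=-K$ — this is the step that must be imported carefully from \cite{NRR1,HNRR}. A secondary point is the suppression estimate for $\bar w_\infty$: although the initial mass at $x\ll-K$ is large, the absorption rate $\al^2$ exceeds $\kappa_-^2$, so that mass is killed before it can diffuse to $x=-K$, which is exactly what makes $e^{-\kappa_- x}e^{-c(t-s)}$ a valid barrier; one then controls the little that leaks into $[-K,\infty)$ by the maximum principle with exponentially decaying boundary data.
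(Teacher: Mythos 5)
Your overall structure — replace $\psi$ by the weight $\varphi$ adapted to the potential $\al^2\one_{(-\infty,-K)}$, split $\bar w=\bar w_0+\bar w_\infty$ by the support of the initial datum, kill $\bar w_\infty$ with an exponential barrier (using $\kappa_-<\al$), and estimate $\bar w_0$ by a half-line heat-kernel barrier imported from \cite{HNRR,NRR1} — is a legitimate and in places cleaner alternative to the paper's argument, which instead builds hybrid exponential/Dirichlet supersolutions and covers the initial datum by a \emph{dyadic} family of Gaussians. The $\bar w_\infty$ piece and the identity $\tfrac{\dn}{\dn t}\int\varphi\bar w=\tfrac{3}{2\lambda_*(t+1)}\int\varphi'\bar w$ are both fine.

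The gap is in the single-scale pointwise barrier you claim for $\bar w_0$. With $\sigma=\min\{s,\kappa_+^{-2}\}+1$ and $m_0=\int\varphi\,\bar w_0(s,\cdot)$, you assert
\begin{equation*}
  \bar w_0(t,x)\le Cm_0\,\frac{(x+2K)_++1}{(t-s+\sigma)^{3/2}}\,e^{-cx^2/(t-s+\sigma)}\qquad(x\ge-K,\ t\ge s),
\end{equation*}
but this is false already at $t=s$, $x=0$, once $\sigma$ is large. Indeed $\bar w_0(s,0)=1$, while $m_0\asymp\sigma$ for $\sigma\gg1$, so the right side at $x=0$, $t=s$ is of order $m_0/\sigma^{3/2}\asymp\sigma^{-1/2}\to0$. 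The underlying obstruction is structural: the datum $\bar w_0(s,\cdot)$ simultaneously has a height-$1$ bump of width $O(1)$ near the origin and a tail extending to width $\sqrt\sigma$; any \emph{single} Gaussian of width $\sqrt\sigma$ whose first moment respects the budget $m_0\lesssim\max\{\kappa_+^{-2},1\}$ has height $\lesssim m_0/\sigma^{3/2}\ll1$ and cannot dominate the $O(1)$ peak. Your third intermediate estimate $\int\abs E\varphi\bar w_0\le Cm_0(t+1)^{-1/2}(t-s+\sigma)^{-3/2}$ inherits the problem and likewise fails at $t=s$ (it gives $\sim\sigma^{-1/2}(s+1)^{-1/2}$, whereas the actual integral is $\sim(s+1)^{-1/2}$). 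This is precisely what the paper's construction is designed to fix: rather than one Gaussian of width $\sqrt\sigma$, it dominates the datum by the sum $\sum_{k\le\log_2 s+1}D_+e^{-\kappa_+2^k}\,2^k\varphi(2^k,\cdot)$, so each scale $\sqrt{2^k}$ is covered by a Gaussian of matching width and height, and the total coefficient $\sum_k D_+e^{-\kappa_+2^k}2^k$ is what produces the $\max\{\kappa_+^{-2},1\}$. To repair your proof you would need to replace the single-$\sigma$ barrier by such a multiscale cover (a two-Gaussian barrier, one at scale $1$ carrying the $O(1)$ peak and one at scale $\sqrt\sigma$ carrying the tail mass, would also do), and re-derive your three integral bounds for the sum.
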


We prove Lemma~\ref{lem:super} in Appendix~\ref{sec:appendix}, but offer a heuristic explanation here.
Roughly speaking, we may think of \eqref{eq:super} as the heat equation on 
the half-line with Dirichlet boundary conditions, so that
\begin{equation}\label{20may1524}
w(t, x) \sim  \frac{x}{(t - s + 1)^{3/2}}e^{-{x^2}/{(4(t - s + 1))}}
\int_0^\infty xw(s,x)\d x\le \farc{C}{\kappa_+^2}
\frac{x}{(t - s + 1)^{3/2}}e^{-{x^2}/{(4(t - s + 1))}}.
\end{equation}
The bounds in \eqref{eq:psi-super}--\eqref{eq:wave-super} come from the right side
of (\ref{20may1524}). 

Going forward, we separately consider three regimes delimited by $t_*$ from \eqref{20mar2630} and $a$:
the early times $0\le t\le  t_*$ in Section~\ref{sec:early-modern}, the middle times  $t_* \leq t \leq a$ in Section~\ref{sec:late-modern}, and the late times $t\ge a$ in Section~\ref{sec:postmodern}.
The cutoff $a$ is more or less arbitrary: it simply allows us to assume that $t = O(a)$ in the middle regime.
As stated above, Proposition~\ref{prop:main} and
Theorem~\ref{thr:main} are immediate consequences of 
Lemmas~\ref{lem-may1506}, \ref{lem-may2202}, and~\ref{20may2420}.

\section{The early times}
\label{sec:early-modern}
In this section, we start analyzing the  contributions of $p$ and
$q=\qm+\qe$ 
to $I(t)$ and $\dot I(t)$.
We begin with the early times $t \leq t_*$.
The terms involving the ``early'' corrector $\qe$ have already
been bounded in Lemma~\ref{lem:early-corrector} and will turn out to be
irrelevant---they are much smaller than the corresponding  contributions of
$p(t,x)$ in Lemma~\ref{lem-may1902} below. It  remains to estimate the terms 
involving $p(t,x)$ and $\qm(t,x)$. We will see that the terms involving $p$
dominate for nearly the entire interval $t\in[0,t_*]$ (in particular,
$\qm\equiv0$ for $t\le\earlyt$). However, the contributions of $p$ and
$\qm$ become comparable when $t_* - t = O\big(\sqrt{a}\big)$.

The main result of this section is the following lemma.

\begin{lemma}\label{lem-may1506}
We have 
\begin{equation}\label{20may1538}
\frac{\dot I(t)}{I(t) }= 
\frac{3\sqrt{N}}{2\lambda_*(t + 1)} 
\left[1 + O\left(e^{-\frac{c( t_*-t)^2}{a}}\right)\right]
\end{equation}
for all $t\in[0, t_*]$.
\end{lemma}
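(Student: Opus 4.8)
The plan is to control $\dot I(t)/I(t)$ on $[0,t_*]$ by reducing everything to the explicit free solution $p$ and then showing the corrector $q = \qe + \qm$ contributes only lower-order terms. Start from the identity \eqref{mar1912} for $\dot I$. The first term there is $\frac{3}{2\lambda_*(t+1)}\int_\R r\psi' \ds x$; writing $\psi' = (\psi'/\psi)\psi$ and recalling from \eqref{eq:adjoint-boundary} that $\psi'/\psi \to \sqrt N$ as $x\to-\infty$ and $\psi'/\psi \sim 1/x \to 0$ as $x\to+\infty$, the goal is to show that for $t \le t_*$ the adjoint-weighted mass $\int_\R \psi r\ds x$ is concentrated deep in $\R_-$, so that $\int_\R \psi' r \ds x = \sqrt N \int_\R \psi r\ds x \,(1 + \text{small})$, i.e.
\begin{equation*}
\dot I(t) = \frac{3\sqrt N}{2\lambda_*(t+1)} I(t)\big(1 + \text{error}\big) + \text{(the $E$-term in \eqref{mar1912})},
\end{equation*}
with both error contributions of relative size $O(e^{-c(t_*-t)^2/a})$.

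The first step is to do this computation with $p$ in place of $r$. Using the explicit Gaussian \eqref{eq:explicit} (or the factored form \eqref{eq:p-zero}--\eqref{eq:g}) together with \eqref{eq:adjoint-boundary}, one computes $\int_\R \psi p \ds x$ and $\int_\R \psi' p\ds x$ by Laplace's method: the integrand is a Gaussian centered near $x \approx -a$ times $\psi(x) \sim \Ufactor\gamma_* e^{-\gamma_*\bar x_0} e^{\sqrt N x}$, so the product is a Gaussian centered near $x_c(t) = -a + 2\sqrt N t$ (the $e^{\sqrt N x}$ shifts the Gaussian mean by $2\sqrt N t$, up to the $\log$ drift). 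For $t < t_* = a/(2\sqrt N)$ this center is still negative and at distance $\asymp (t_* - t)\sqrt N$ from the origin, with Gaussian width $\asymp \sqrt t$. Hence the fraction of $\psi p$-mass that sits at $x \ge 0$, where $\psi'/\psi$ fails to be $\approx \sqrt N$, is controlled by a Gaussian tail $\exp\{-c\,x_c(t)^2/t\} \le \exp\{-c(t_*-t)^2/a\}$ (using $t \le t_* \asymp a$). This yields \eqref{20may1538} with $r$ replaced by $p$, and the $E$-term is handled the same way — $|E| \le |N - V(t,\cdot)| + |V(t,\cdot) - V(\infty,\cdot)|$ is bounded via Lemma~\ref{lem:front-decay} and \eqref{eq:potential-infty}, and the extra spatial decay $e^{\gamma_* x}$ from $N - V$ only improves the Gaussian tail estimate, while $\psi |E| p$ is integrable on $\R_-$ precisely because $\gamma_* + \sqrt N > a/(2t)$ for $t$ past $\earlyt$.

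The second step is to upgrade from $p$ to $r = p + \qe + \qm$. The $\qe$ contributions to $\int \psi \qe$, $\int \psi' \qe$, and $\int \psi |E| \qe$ are bounded in Lemma~\ref{lem:early-corrector} by $C e^{-(\sqrt N + c)a}$ (and with extra $(t+1)^{-1/2}$, $(t+1)^{-2}$ factors), whereas $I(t) \gtrsim \int \psi p \asymp e^{-\sqrt N a} \cdot (\text{polynomial})$ — actually $I(t) \ge I(0)(t+1)^{\text{const}} e^{-Nt}\cdots$, but the essential point is that $\qe$ carries a genuine extra $e^{-ca}$ relative to $I(t)$ on $[0,t_*]$, so it is absorbed into the error term. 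For $\qm$: it is zero for $t \le \earlyt$ by definition, and on $[\earlyt, t_*]$ it solves \eqref{eq:scattering-main} forced by $(N-V)\one_{[\earlyt,\infty)}p$; since $N - V$ is small (Lemma~\ref{lem:front-decay}) and $p$ is itself small before $t_*$, $\qm$ is much smaller than $p$ on this interval — quantitatively, one bounds $\int \psi \qm$ by Duhamel against the forcing and compares with $\int \psi p$, the ratio being $O(e^{-c(t_*-t)^2/a})$ or smaller because the forcing only starts to ``bite'' near $t_*$. Combining the three pieces gives \eqref{20may1538} for $r$.

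The main obstacle is the second step for $\qm$ on the sub-interval $t_* - t = O(\sqrt a)$, where the contributions of $p$ and $\qm$ become genuinely comparable: here one cannot simply say $\qm \ll p$, and must instead check that $\qm$'s adjoint-weighted mass is \emph{also} concentrated in $\R_-$ (or at least that its $\psi'$-to-$\psi$ ratio is still $\approx \sqrt N$ up to the claimed error), so that adding $\qm$ to $p$ does not spoil the relation $\int \psi' r \approx \sqrt N \int \psi r$. This requires a Duhamel representation of $\qm$ together with the half-line heat-kernel heuristic of Lemma~\ref{lem:super} (applied with the forcing as ``initial data'' released near time $\earlyt$ or later) to track where $\qm$'s mass lives; the Gaussian bookkeeping must be done carefully enough to produce the exponent $c(t_*-t)^2/a$ uniformly. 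Everything else is Laplace's-method estimation on explicit Gaussians and invocations of the already-stated Lemmas~\ref{lem:front-decay}, \ref{lem:early-corrector}, and \ref{lem:super}.
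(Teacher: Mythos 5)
Your overall decomposition $r = p + \qe + \qm$ and the identification of the relevant lemmas (Lemma~\ref{lem-may1902} for $p$, Lemma~\ref{lem:early-corrector} for $\qe$, a Duhamel$+$Lemma~\ref{lem:super} estimate for $\qm$) coincide with the paper's Lemmas~\ref{lem-may1902} and~\ref{lem:qm-early-middle}, and you correctly isolate the critical regime $t_* - t = O(\sqrt a)$ as the place where the argument must be more careful. However, your proposed resolution of that obstacle is not the right one. You suggest one must verify that $\qm$'s adjoint-weighted mass is also concentrated in $\R_-$, or that its $\psi'$-to-$\psi$ ratio remains $\approx \sqrt N$. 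Neither holds, nor is needed: $\qm$ is forced by $(N-V)p$, which is localized near $\R_+$ because $N-V$ decays like $e^{\gamma_* x}$ on the left, so $\qm$'s mass lives where $\psi'/\psi \sim 1/x \ll \sqrt N$. Indeed Lemma~\ref{lem:qs-integrals} gives $\int\psi' q^s / \int \psi q^s$ decaying like $(t-s+1)^{-1/2}$, not anchored near $\sqrt N$. The relation $\int\psi' r \approx \sqrt N \int \psi r$ genuinely fails when $t_*-t = O(\sqrt a)$, and that is fine because the error factor $e^{-c(t_*-t)^2/a}$ is $O(1)$ there — the statement only claims $\dot I/I = O(1/(t+1))$ in that window.

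What actually makes the argument close is a two-part structure you only gesture at: (a) Lemma~\ref{lem:qm-early-middle} bounds both $\int\psi\qm$ and $\int\psi'\qm$ (and the $E$-weighted integral) by $C\,A(t)\,e^{-c(t_*-t)^2/a}$ with $A(t) = (t+1)^{3\sqrt N/(2\lambda_*)}e^{-\sqrt N a}$, i.e.\ $\qm$ is uniformly absorbed into the error term, never treated as a ``good'' contribution; and (b) crucially, a \emph{lower bound} $I(t) \geq \int\psi p \geq c\,A(t)$ on $[\earlyt, t_*]$, which is \eqref{20may1925} and rests on $q\geq 0$ (so $r\geq p$) together with a direct Gaussian estimate showing at least order-one mass of $\psi p$ remains in $\R_-$. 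Without (b) the asymptotic $\int\psi p = A(t)[1 + O(e^{-c(t_*-t)^2/a})]$ allows $I(t)$ to vanish near $t_*$, and the ratio $\dot I/I$ could be unbounded. The paper explicitly flags this after Lemma~\ref{lem-may1902}. You mention a lower bound on $I$ only in passing, in the $\qe$ comparison, not as the ingredient that rescues the ratio in the critical window; and your proposed check on $\qm$'s spatial profile would not substitute for it. Filling in (b) (and dropping the $\qm$-ratio idea) would make the proof correct.
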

We note that the error term in \eqref{20may1538} is small over most of the time period $[t, t_*],$ but it becomes order $1$ when $t_* - t = O\big(\sqrt{a}\big)$.
After integration, Lemma~\ref{lem-may1506} implies that
\begin{equation}\label{eq-early-int}
\log\frac{I(t_*)}{I(0)}=\frac{3\sqrt{N}}{2\lambda_*}\log(t_*+1)+ O(a^{-1/2})
=\frac{3\sqrt{N}}{2\lambda_*}\log\frac{a}{2\sqrt N}+ O(a^{-1/2}),
\end{equation}
where the error term is dominated by the region $t_* - t =             O\big(\sqrt{a}\big)$.

\subsection*{The free contribution}

We first look at the putative main term in this period.
\begin{lemma}
  \label{lem-may1902}
  There exist $c>0$ and $C> 0 $ so that for all $t \in [\earlyt,t_*]$, we have the asymptotics
\begin{equation}\label{20may1920}
  \int_\R \psi(x) p(t,x) \d x = \Ufactor \gamma_*e^{-\gamma_*\bar x_0}(t + 1)^{{3\sqrt{N}}/{(2\lambda_*)}} 
e^{-\sqrt{N}a}\Big[1 + O\Big(e^{-\frac{c(t_*-t)^2}{a}}\Big)\Big]
\end{equation}
and
\begin{equation}\label{20may1922}
\int_\R \psi'(x)p(t, x) \d x = \sqrt{N}\Ufactor \gamma_*e^{-\gamma_*\bar x_0}(t + 1)^{{3\sqrt{N}}/{(2\lambda_*)}} 
e^{-\sqrt{N}a} \Big[1 + O\Big(e^{-\frac{c(t_*-t)^2}{a}}\Big)\Big],
\end{equation}
as well as the error estimate
\be\label{20may1924}
\int_\R|E(t,x)| p(t,x) \psi(x)\d x\le \frac C {t+1} e^{-\frac{c(t_*-t)^2}{a}}
 \int_\R \psi'(x) p(t,x)\d x.
\ee
Moreover, for all $t \in [\earlyt, t_*]$,
\begin{equation}\label{20may1925}
I(t) \geq \int_\R \psi(x) p(t,x) \d x \ge c (t + 1)^{{3\sqrt{N}}/{(2\lambda_*)}} e^{-\sqrt{N}a}.
\end{equation}
\end{lemma}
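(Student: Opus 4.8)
The plan is to reduce every assertion to an explicit Laplace‑type estimate for the Gaussian $p$ of \eqref{eq:explicit}, using two structural facts. Writing $b:=a-\frac{3}{2\lambda_*}\log(t+1)$, so that $p(t,\cdot)$ is $\frac{e^{-Nt}}{\sqrt{4\pi t}}$ times a Gaussian of variance $2t$ centered at $-b$, completing the square gives the \emph{exact} identity $\int_\R e^{\mu x}p(t,x)\d x=e^{-Nt-b\mu+t\mu^2}$; in particular $\int_\R e^{\sqrt N x}p(t,x)\d x=e^{-\sqrt N b}=(t+1)^{3\sqrt N/(2\lambda_*)}e^{-\sqrt N a}$, which will be the main term in \eqref{20may1920}. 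Second, the Gaussian weight of $p$ away from its tilted peak is controlled by the phase $\phi(t)=Nt+a^2/(4t)$: it is convex, decreasing on $(0,t_*]$, has $\phi(t_*)=\sqrt N a$, and satisfies $\phi''(t)=a^2/(2t^3)\ge 4N^{3/2}/a$ on $[\earlyt,t_*]$, so Taylor expansion yields $\phi(t)-\sqrt N a\ge 2N^{3/2}(t_*-t)^2/a$ there. This quadratic bound is what produces the factor $e^{-c(t_*-t)^2/a}$ everywhere. I will also use that $a/(2t)\ge\sqrt N$ on $[\earlyt,t_*]$, hence $e^{-ax/(2t)}\le e^{-\sqrt N x}$ for $x\ge0$ in the representation \eqref{eq:p-zero}.

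For \eqref{20may1920} I would split $\psi(x)=\Ufactor\gamma_* e^{-\gamma_*\bar x_0}e^{\sqrt N x}+\rho(x)$, where \eqref{eq:adjoint-boundary} and continuity give $|\rho(x)|\le Ce^{(\sqrt N+c)x}$ for $x\le0$ and $|\rho(x)|\le Ce^{\sqrt N x}$ for $x\ge0$ (the extrapolated exponential dominates the genuinely linear tail of $\psi$). The leading term is then the exact Gaussian integral above, and I must bound $\int_\R\rho\,p$, split over $\{x\le -M\}$, $\{-M<x\le0\}$, $\{x>0\}$. On $\{x>0\}$, $\int_{x>0}|\rho|p\le C\int_{x>0}e^{\sqrt N x}p$; since the tilted Gaussian $e^{\sqrt N x}p$ is centered at $2t\sqrt N-b=-2\sqrt N(t_*-t)+O(\log a)$, for $t$ bounded away from $t_*$ a Gaussian tail bound gives $\int_{x>0}e^{\sqrt N x}p\le(\mathrm{main})\,e^{-c(t_*-t)^2/a}$, while for $t_*-t=O(\sqrt a)$ one uses $\int_{x>0}e^{\sqrt N x}p\le\int_\R e^{\sqrt N x}p=\mathrm{main}$ and absorbs it into the (then $O(1)$) error factor. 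On $\{-M<x\le0\}$, $\int|\rho|p\le C\int_{-M}^0 p\le Ct^{-1/2}e^{-Nt-b^2/(4t)}=Ct^{-1/2}(t+1)^{3a/(4\lambda_*t)}e^{-\phi(t)}e^{O((\log t)^2/t)}$, so the ratio to the main term is $\le Ct^{-1/2}(t+1)^{3\sqrt N(t_*-t)/(2\lambda_*t)}e^{-(\phi(t)-\sqrt N a)}$, which is $\le Ce^{-c(t_*-t)^2/a}$ because the $t^{-1/2}\le(\earlyxi a)^{-1/2}$ gain beats the log‑power correction. On $\{x\le-M\}$, $\int_{x\le-M}|\rho|p\le C\int_{x\le-M}e^{(\sqrt N+c)x}p$, estimated by a \emph{constrained} Laplace argument with casework on the tilted center $2t(\sqrt N+c)-b$: if it lies below $-M$ --- which forces $t_*-t\gtrsim ca$, hence $t_*-t\gg\log a$ --- the interior‑maximum value is $\le(\mathrm{main})\,e^{-c(t_*-t)}\le(\mathrm{main})\,e^{-c(t_*-t)^2/a}$; otherwise the maximum over $\{x\le-M\}$ sits at $x=-M$ and we recover the same $t^{-1/2}$‑type bound as above, with the crossover range $t_*-t\asymp ca$ handled by the then exponentially small factor $e^{-(\phi(t)-\sqrt N a)}\asymp e^{-c^2a}$. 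Estimate \eqref{20may1922} runs identically once one notes, from the ODE $\psi''=\nonlin'(U_0)\psi$ with $\nonlin'(U_0)\to N$ as $x\to-\infty$ and $\to0$ as $x\to+\infty$, that $\psi'(x)=\sqrt N\,\Ufactor\gamma_* e^{-\gamma_*\bar x_0}e^{\sqrt N x}+O(e^{(\sqrt N+c)x})$ as $x\to-\infty$ and $\psi'$ is bounded on $\R_+$.

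For \eqref{20may1924} I would combine Lemma~\ref{lem:front-decay} --- which gives $|E(t,x)|\le(N-V(t,x))+(N-V(\infty,x))\le Ce^{\gamma_* x}$ for $x\le0$ and, symmetrically, $|E(t,x)|\le Ce^{-cx}$ for $x\ge0$ --- with the refined decay rate $|E(t,x)|\le C(t+1)^{-1}$ on the relevant $x$‑range, which follows from the NRR1‑type analysis already behind Propositions~\ref{prop:z-limit} and \ref{prop:diffusive-scale}. For $t_*-t=O(\sqrt a)$, where $\psi p$ concentrates near $x\approx0$ and $e^{-c(t_*-t)^2/a}=\Theta(1)$, the crude $|E|\le C(t+1)^{-1}$ already gives $\int|E|\psi p\le\frac{C}{t+1}\int\psi p\le\frac{C}{t+1}(\mathrm{main})$; for $t$ far from $t_*$, where $\psi p$ concentrates near $x\approx-2\sqrt N(t_*-t)\ll0$, the spatial decay $|E|\psi\le Ce^{(\gamma_*+\sqrt N)x}$ on $\R_-$ together with the boundary Laplace estimate produce the factor $e^{-(\phi(t)-\sqrt N a)}$ with room to spare (enough to absorb the extra $(t+1)$); the $\{x>0\}$ part goes as in \eqref{20may1920}. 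Dividing by $\int\psi'p\asymp\sqrt N\int\psi p$ (from the first two displays) gives \eqref{20may1924}. Finally, \eqref{20may1925} follows from $I(t)=\int\psi r\ge\int\psi p$ --- valid since $\psi=-U_0'e^{\lambda_* x}\ge0$ and $r\ge p$ because $q=r-p\ge0$ by \eqref{eq:q-nonneg} --- together with $\int\psi p\ge c\int_{x\le0}e^{\sqrt N x}p\ge\frac{c}{3}e^{-\sqrt N b}$, using $\psi(x)\ge ce^{\sqrt N x}$ on $\R_-$: the tilted Gaussian $e^{\sqrt N x}p$ is centered at $2t\sqrt N-b=-2\sqrt N(t_*-t)+O(\log a)$, which is $\le0$ for $t$ away from $t_*$ and only $o(\sqrt t)$ for $t$ close, so at least a third of its total mass $e^{-\sqrt N b}=e^{-\sqrt N a}(t+1)^{3\sqrt N/(2\lambda_*)}$ lies on $\R_-$.

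The step I expect to be the real obstacle is the error analysis in \eqref{20may1920}--\eqref{20may1922}: the naive move of replacing $\int_{x\le0}e^{(\sqrt N+c)x}p$ by $\int_\R e^{(\sqrt N+c)x}p$ is fatally lossy, since the extra tilt $e^{cx}$ makes that integral grow like $e^{tc^2}=e^{c^2a/(2\sqrt N)}$ at $t=t_*$ and swamps the main term; one must retain the half‑line restriction and treat boundary and tail contributions carefully, juggling two distinct sources of gain --- the phase difference $\phi(t)-\sqrt N a$ (decisive for $t$ far from $t_*$) and the $t^{-1/2}$ prefactor (decisive for $t$ near $t_*$) --- which are each individually inadequate in the crossover range $t_*-t\asymp ca$. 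A secondary technical point is to state explicitly the $O((t+1)^{-1})$ decay rate for $E$ invoked in \eqref{20may1924}, drawing on the same NRR1‑style estimates.
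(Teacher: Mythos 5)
Your overall strategy is a variant of the paper's, and the main line is sound: you reduce each estimate to an explicit Gaussian integral via the identity $\int e^{\sqrt N x}p = e^{-\sqrt N b}$ and then control the remainder. The paper accomplishes the same Laplace analysis by working with the bounded function $\zeta(x)=\psi(x)e^{-\sqrt N x}=\Ufactor\gamma_*e^{-\gamma_*\bar x_0}\one_{\R_-}(x)+O(e^{-c|x|})$, which makes the indicator cutoff automatic and avoids your constrained‑Laplace casework on $\{x\le-M\}$; your decomposition $\psi=\Ufactor\gamma_*e^{-\gamma_*\bar x_0}e^{\sqrt N x}+\rho$ with $\rho=O(e^{\sqrt N x})$ on $\R_+$ is equivalent but forces you to handle explicitly the fact that extending $\int_{x\le-M}e^{(\sqrt N+c)x}p$ to $\int_\R$ is fatally lossy. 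Your awareness of the crossover $t_*-t\asymp ca$ where neither the phase nor the $t^{-1/2}$ prefactor alone suffices is exactly the delicate point, and your handling of it and of the lower bound \eqref{20may1925} agree in substance with the paper's.

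There is, however, one genuine gap: your proof of \eqref{20may1924} in the regime $t_*-t=O(\sqrt a)$ relies on an asserted bound $|E(t,x)|\le C(t+1)^{-1}$ which is not available and is in fact false. The established rate, from \cite{Graham,NRR1,NRR2}, is only
\begin{equation*}
|E(t,x)|\le C\big|U_0(x)-H(t,x+m(t))\big|\le \frac{Ce^{-c|x|}}{\sqrt{t+1}},
\end{equation*}
i.e.\ temporal decay $(t+1)^{-1/2}$, with the spatial decay $e^{-c|x|}$ as a separate, orthogonal gain. The missing $(t+1)^{-1/2}$ needed to reach $(t+1)^{-1}$ in \eqref{20may1924} must come from pairing the spatial weight $e^{-c|x|}$ against the Gaussian peak height $(4\pi t)^{-1/2}$ in $p$, not from a stronger uniform bound on $E$. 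Concretely, using $|E|\le\frac{C}{\sqrt{t+1}}e^{-c|x|}$ and $\zeta\le C$ one gets
\begin{equation*}
\int_\R|E|\,\psi\,p\,\d x\le\frac{C}{\sqrt{t+1}}\,e^{-\sqrt N a}(t+1)^{3\sqrt N/(2\lambda_*)}\int_\R e^{-c|x|}\exp\!\Big\{-\tfrac{1}{4t}[x-\nu(t)]^2\Big\}\frac{\dn x}{\sqrt{4\pi t}},
\end{equation*}
and the last integral is itself $O\big((t+1)^{-1/2}e^{-c(t_*-t)^2/a}\big)$ by the same splitting at $\nu(t)/2$ used for the remainder in \eqref{20may1920}. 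Replacing your asserted $|E|\le C(t+1)^{-1}$ with this two‑factor argument closes the gap; everything else in your proposal is a correct, if somewhat lengthier, rendering of the intended argument.
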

The first estimate in this lemma permits $\int_\R \psi p$ to reach $0$ when $t_*-t = O\big(\sqrt{a}\big)$, which could make $\dot I(t)/I(t)$ very large.
The lower bound (\ref{20may1925}) ensures that this does not occur.

\begin{proof}
To prove (\ref{20may1920}), let us first re-write $p(t,x)$ in a more
convenient form, starting from \eqref{eq:explicit}:
\begin{equation}
\label{20may1404}
\bal
p(t,x)
& = \frac 1 {\sqrt{4\pi t}} \exp\Big\{-Nt - \frac{1}{4t}\Big[x + a - 
\frac{3}{2\lambda_*} \log(t+1)\Big]^2\Big\} 
\\
& = \frac 1 {\sqrt{4\pi t}} \exp\Big\{ - \frac{1}{4t}\Big[x + a - 
\frac{3}{2\lambda_*} \log(t+1)-2\sqrt N t\Big]^2
-\sqrt N\Big[x+a-\frac3{2\lambda_*}\log(t+1)\Big]\Big\} 
\\
&=\farc{1}{\sqrt{4\pi t}} {e^{-a\sqrt{N}}}(t+1)^{3\sqrt{N}/(2\lambda_*)}
 e^{-\sqrt{N}x}\exp\Big\{- \frac{1}{4t}\Big[x - \wshift(t)\Big]^2\Big\}.
 \enbal
\end{equation}
where we used the notation
\begin{equation}\label{eq:mu}
\wshift(t) = -a + \frac{3}{2\lambda_*} \log(t + 1) + 2\sqrt{N}t.
\end{equation}
It follows that
\begin{equation}
  \label{20may1928}
  \begin{aligned}
    &\int_\R \psi(x) p(t,x) \d x= {e^{-\sqrt{N}a}}(t+1)^{3\sqrt{N}/(2\lambda_*)}
    \int_\R\zeta(x)\exp\Big\{- \frac{1}{4t}\Big[x -\wshift(t)\Big]^2\Big\}\farc{\dn x}{\sqrt{4\pi t}},
  \end{aligned}
\end{equation}
with
\begin{equation*}
\zeta(x)=\psi(x)\exp\big(-\sqrt{N}x\big).
\end{equation*}
Note that  (\ref{eq:adjoint-boundary}) gives 
\begin{equation}
  \label{eq:zeta-indicator}
  \zeta(x) = \Ufactor \gamma_* e^{-\gamma_* \bar{x}_0} \one_{\R_-}(x) + O\big(e^{-c\abs{x}}\big).
\end{equation}
Note also that  
\be\label{20may1931}
\wshift(t)=-a+\farc{3}{2\lambda_*}\log (t+1)+2\sqrt{N}(t_*-(t_*-t))\le -2\sqrt{N}(t_*-t)+C\log a.
\ee
When $t_* - t \leq \sqrt{a}$, \eqref{20may1920} follows simply from $0 \leq \zeta \leq C$, which is a consequence of \eqref{eq:zeta-indicator}
We may thus assume that $0 \leq t \leq t_* - \sqrt{a}$.
Since $\log a \ll \sqrt{a}$ when $a$ is large, we can in turn assume that \eqref{20may1931} yields
\begin{equation}
  \label{eq:nu-simple}
  \nu(t) \leq -c(t_* - t).
\end{equation}
Using \eqref{eq:zeta-indicator} and \eqref{20may1931}, we can write the integral in the right side of \eqref{20may1928} as
\begin{equation}
  \begin{aligned}
    \label{eq:zeta-p-condensed}
    \int_\R\zeta(x)\exp\Big\{- \frac{1}{4t}\Big[x -\wshift(t)\Big]^2\Big\}\farc{\dn x}{\sqrt{4\pi t}} &= \Ufactor\gamma_* e^{-\gamma_*\bar x_0} \int_{\R_-} \exp\Big\{- \frac{1}{4t}\Big[x -\wshift(t)\Big]^2\Big\}\farc{\dn x}{\sqrt{4\pi t}}\\
    &\hspace{1cm}+O\left(\int_\R e^{-c\abs{x}}\exp\Big\{- \frac{1}{4t}\Big[x -\wshift(t)\Big]^2\Big\}\farc{\dn x}{\sqrt{4\pi t}}\right).
  \end{aligned}
\end{equation}
We can write the main term on the right side as
\begin{equation}
  \label{eq:zeta-p-main}
  \begin{aligned}
    \int_{\R_-} \exp\Big\{- \frac{1}{4t}\Big[x -\wshift(t)\Big]^2\Big\}\farc{\dn x}{\sqrt{4\pi t}} &= 1 - \int_{\R_+} \exp\Big\{- \frac{1}{4t}\Big[x + \abs{\wshift(t)}\Big]^2\Big\}\farc{\dn x}{\sqrt{4\pi t}}\\
    &= 1 + O\left(e^{-\nu(t)^2/(4t)}\right) =  1 + O\left(e^{-\frac{c(t_*-t)^2}{a}}\right).
  \end{aligned}
\end{equation}
We used \eqref{eq:nu-simple} and $t \leq a$ in the last step.
To bound the error in \eqref{eq:zeta-p-condensed}, we break the integral at the position $\nu(t)/2$.
For $x\le\nu(t)/2$ we
write, on the one hand,
\begin{equation*}
  \int_{-\infty}^{\nu(t)/2} e^{-c\abs{x}}\exp\Big\{- \frac{1}{4t}\Big[x -                     
\wshift(t)\Big]^2\Big\}\farc{\dn x}{\sqrt{4\pi t}} \leq 
\frac1{\sqrt{4\pi t}} \int_{-\infty}^{\nu(t)/2} e^{cx}\d x = \frac{C}{\sqrt
t} e^{c\nu(t)/2}
\end{equation*}
and, on the other hand
\begin{equation*}
  \int_{-\infty}^{\nu(t)/2} e^{-c\abs{x}}\exp\Big\{- \frac{1}{4t}\Big[x -
\wshift(t)\Big]^2\Big\}\farc{\dn x}{\sqrt{4\pi t}} \leq
e^{c\nu(t)/2}\int_\R\exp\Big\{- \frac{1}{4t}\Big[x -
\wshift(t)\Big]^2\Big\}\farc{\dn x}{\sqrt{4\pi t}} = e^{c\nu(t)/2}
\end{equation*}
Combining these two bounds gives
\begin{equation}
  \label{eq:zeta-p-error-left}
  \begin{aligned}
    \int_{-\infty}^{\nu(t)/2} e^{-c\abs{x}}\exp\Big\{- \frac{1}{4t}\Big[x - \wshift(t)\Big]^2\Big\}\farc{\dn x}{\sqrt{4\pi t}}
    \leq \frac{C}{\sqrt{1+t}} e^{c\nu(t)/2}
    &\leq \frac{C}{\sqrt{1+t}} e^{-c(t_* - t)}\\
    &\leq \frac{C}{\sqrt{1+t}} e^{-\frac{c(t_* - t)^2}{a}}.
  \end{aligned}
\end{equation}
For the the region $x\ge\nu(t)/2$ of the integral in the error term in
\eqref{eq:zeta-p-condensed}, we have
\begin{equation}
  \label{eq:zeta-p-error-right}
  \int_{\nu(t)/2}^\infty e^{-c\abs{x}}\exp\Big\{- \frac{1}{4t}\Big[x
-                        \wshift(t)\Big]^2\Big\}\farc{\dn x}{\sqrt{4\pi t}}
\leq \frac{C}{\sqrt{t}}e^{-\nu(t)^2/      (16t)} \leq \frac{C}{\sqrt{1+t}} e^{-\frac{c(t_* - t)^2}{a}}.
\end{equation}
In the last step, we wrote $\nu(t)^2/(16t)\ge 1/(32t)+\nu(t)^2/(32t)$ and used
\begin{equation*}
  \frac{1}{\sqrt{t}}\exp\left(-\frac{1}{32t}\right) \le \frac{C}{\sqrt{t + 1}}.
\end{equation*}

Now, \eqref{20may1920} follows from \eqref{20may1928},
\eqref{eq:zeta-p-condensed},
\eqref{eq:zeta-p-main},
\eqref{eq:zeta-p-error-left} and
\eqref{eq:zeta-p-error-right}.
The proof of \eqref{20may1922} is identical: we just need to
use
 $\tilde\zeta(x):=\psi'(x)\exp(-\sqrt N x)=
 \sqrt N \Ufactor \gamma_* e^{-\gamma_* \bar{x}_0} \one_{\R_-}(x) + O\big(e^{-              c\abs{x}}\big)
$
in place of
\eqref{eq:zeta-indicator}.

We now turn to \eqref{20may1924}.  Recalling the definitions
\eqref{20mar2608} of $E(t,x)$ and \eqref{20may1214} of $V(t,x)$, and
noting that $h'$ is Lipschitz, we recall the following bound from \cite{Graham,NRR1,NRR2}:
\be\label{20may1932}
|E(t,x)|\le C \big|U_0(x) - H(t, x + m(t))\big| \leq \farc{Ce^{-c|x|}}{\sqrt{t+1}}.
\ee
Strictly speaking, \cite{NRR2} only proves this bound for $x \geq 0.$
However, the wave $U_0$ is linearly stable on $\R_-$ and \eqref{eq:wave-limits} and \eqref{eq:KPP-inequality-left} imply that $U_0 - H$ decays exponentially in space.
Because
\begin{equation*}
  \abs{U_0(0) - H(t, m(t))} \leq C (t + 1)^{-1/2}
\end{equation*}
at the boundary $x = 0$ of $\R_-$, it is straightforward to control $\abs{U_0 - H}$ by a supersolution of the form $C (t + 1)^{-1/2} e^{-c\abs{x}}$ on $\R_-$, perhaps after translation.
We omit the details.

Inserting this factor in \eqref{20may1928} and recalling that $\zeta(x)\le
C$, we find
\begin{equation*}
  \int_\R \abs{E(t, x)} \psi(x) p(t, x) \d x \leq \frac{C}{\sqrt{t+1}} e^{-\sqrt{N}a} (t + 1)^{3\sqrt{N}/(2\lambda_*)} \int_\R e^{-c\abs{x}}  \exp\Big\{- \frac{1}{4t}\Big[x + \abs{\wshift(t)}\Big]^2\Big\}\farc{\dn x}{\sqrt{4\pi t}}.
\end{equation*}
We estimated this integral integral above, assuming $t_* - t \geq \sqrt{a}$.
Using \eqref{eq:zeta-p-error-left} and \eqref{eq:zeta-p-error-right}, we obtain \eqref{20may1924} for such times.
When $t_* - t \leq \sqrt{a}$, we can bound the Gaussian in the above integral by $1$ to obtain \eqref{20may1924}.

To obtain the lower bound in (\ref{20may1925}), we first recall \eqref{eq:q-nonneg}, which implies $r \geq p$ and $I \geq \int_\R \psi p$.
Next, we note that $\zeta \geq c \one_{\R_-}$.
So, \eqref{20may1928} implies
\begin{equation}
  \label{eq:zeta-p-lower-condensed}
  I(t) \geq \int_\R \psi(x) p(t,x) \d x \geq c (t + 1)^{3\sqrt{N}/(2\lambda_*)} e^{-\sqrt{N}a} \int_{\R_-} \exp\left\{-\frac{1}{4t}\left[x - \nu(t)\right]^2\right\} \frac{\dn x}{\sqrt{4 \pi t}}.
\end{equation}
Changing variables via $\eta = \frac{x - \nu(t)}{\sqrt{t}}$, we have
\begin{equation}
  \label{eq:partial-Gaussian}
  \int_{\R_-} \exp\left\{-\frac{1}{4t}\left[x - \nu(t)\right]^2\right\} \frac{\dn x}{\sqrt{4 \pi t}} = \int_{-\infty}^{-\nu(t)/\sqrt{t}} e^{-\eta^2/4} \frac{\dn \eta}{\sqrt{4 \pi}}.
\end{equation}
By \eqref{20may1931}, for $t\ge\earlyt$,
\begin{equation*}
  -\frac{\nu(t)}{\sqrt{t}} \geq -\frac{C \log a}{\sqrt{a}} \geq -1,
\end{equation*}
assuming $a$ is large.
In light of \eqref{eq:partial-Gaussian}, \eqref{eq:zeta-p-lower-condensed} yields
\begin{equation*}
  I(t) \geq \int_\R \psi(x) p(t,x) \d x \geq c (t + 1)^{3\sqrt{N}/(2\lambda_*)} e^{-\sqrt{N}a}.
\end{equation*}
This finishes the proof of Lemma~\ref{lem-may1902}.
\end{proof}

\subsection*{The main corrector  contribution}

Next, we control the contributions of $\qm$ to $I(t)$ and $\dot{I}(t)$.
We only need to consider $t\ge\earlyt$ as $\qm\equiv0$ for $t<\earlyt$.
\begin{lemma}
  \label{lem:qm-early-middle}
  There exist $c > 0$ and $C > 0$ such that for all $t \in [\earlyt, t_*]$,
  \begin{equation}
    \label{eq:qm-psi-psi'-early-middle}
    \int_\R [\psi(x) + \psi'(x)] \qm(t, x) \d x \leq C (t
+ 1)^{3\sqrt{N}/(2\lambda_*)} e^{-\sqrt{N} a} e^{-\frac{c(t_*-t)^2}{a}}
  \end{equation}
  and
    \begin{equation}
    \label{eq:qm-E-early-middle}
    \int_\R \abs{E(t, x)} \psi(x)\qm(t, x) \d x \leq \frac{C}{a} (t
+ 1)^{3\sqrt{N}/(2\lambda_*)} e^{-\sqrt{N} a} e^{-\frac{c(t_*-t)^2}{a}}.
  \end{equation}
\end{lemma}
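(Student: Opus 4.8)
The plan is to analyze $\qm$ via Duhamel's formula and Lemma~\ref{lem:super}. In \eqref{eq:scattering-main} the forcing $(N-V)\one_{[\earlyt,\infty)}p$ vanishes for $t<\earlyt$ and is non-negative, so $\qm\equiv 0$ on $[0,\earlyt]$ and $\qm\ge 0$ by the comparison principle; we therefore work on $t\in[\earlyt,t_*]$ and write $\qm(t,x)=\int_{\earlyt}^t v_s(t,x)\,\d s$, where $v_s\ge 0$ solves the homogeneous part of \eqref{eq:scattering-main} on $[s,\infty)$ with $v_s(s,\cdot)=\big(N-V(s,\cdot)\big)p(s,\cdot)$. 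Lemma~\ref{lem:front-decay} gives $0\le N-V(s,x)\le\min\{Be^{\gamma_* x},N\}$, and, for $K$ large, $V(t,x)\ge\al^2$ on $(-\infty,-K)$ with $\al^2\in(\lambda_*^2,N)$; since $v_s\ge 0$ this makes each $v_s$ an admissible $w$ in Lemma~\ref{lem:super} with $\kappa_-=\lambda_*<\al$.

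The crux is to dominate the forcing, for each $s\in[\earlyt,t]$, by a template initial datum of Lemma~\ref{lem:super} with coefficient
\begin{equation*}
  \Lambda(s)=\frac{C}{\sqrt{s+1}}\,e^{-\sqrt N a}\,(s+1)^{3\sqrt N/(2\lambda_*)}\,e^{-c(t_*-s)^2/a}.
\end{equation*}
Starting from the explicit form \eqref{20may1404}, $p(s,x)=(4\pi s)^{-1/2}e^{-\sqrt N a}(s+1)^{3\sqrt N/(2\lambda_*)}e^{-\sqrt N x}e^{-[x-\nu(s)]^2/(4s)}$, and completing the square, one checks that on $x\ge 0$ the forcing is $\lesssim\Lambda(s)\,e^{-\sqrt N x}e^{-x^2/(8s)}$, and on the bounded window $-K\le x<0$ it is $\lesssim\Lambda(s)$. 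The decay in $\Lambda(s)$ is the factor $e^{-\nu(s)^2/(4s)}$: the defining property of $\earlyt$ is exactly that $\nu(s)+2s\gamma_*>0$ for $s\ge\earlyt$, so the centre $\nu(s)$ of the Gaussian in $p$ lies a distance $\gtrsim t_*-s$ to the left of the origin, and completing the square turns the Gaussian tail together with the $e^{\gamma_* x}$-decay of $N-V$ into $e^{-\nu(s)^2/(4s)}\le e^{-c(t_*-s)^2/a}$ (when $t_*-s=O(\log a)$ this decay is $\asymp 1$ and the estimate is trivial). The part of the forcing supported in the deep-absorption region $x<-K$ is not of template form with a small coefficient, but it is very strongly absorbed; a more delicate argument, exploiting $V\ge\al^2$ on $(-\infty,-K)$ quantitatively, shows that the corresponding homogeneous solution obeys the same three estimates as in Lemma~\ref{lem:super}, now with $\Lambda(s)$ in place of $C\max\{\kappa_+^{-2},1\}$ — here it is essential to use the \emph{sharp} bound $N-V\le Be^{\gamma_* x}$, which already forces the $\psi$-mass of this piece at time $s$ to be $\lesssim\Lambda(s)$. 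Granting this, for all $s\in[\earlyt,t]$ we obtain $\int_\R[\psi(x)+\psi'(x)]\,v_s(t,x)\,\d x\lesssim\Lambda(s)$ and $\int_\R|E(t,x)|\,\psi(x)\,v_s(t,x)\,\d x\lesssim\Lambda(s)\,(t+1)^{-1/2}(t-s+1)^{-3/2}$.

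It remains to integrate in $s$ over $[\earlyt,t]$. On this range $\Lambda(s)\le Ca^{-1/2}e^{-\sqrt N a}(t+1)^{3\sqrt N/(2\lambda_*)}e^{-c(t_*-s)^2/a}$ (using $s\asymp a$), and factoring $e^{-c(t_*-s)^2/a}=e^{-c(t_*-t)^2/a}e^{-c[(t_*-s)^2-(t_*-t)^2]/a}$ with the substitution $s=t-r$ gives $\int_{\earlyt}^t e^{-c[(t_*-s)^2-(t_*-t)^2]/a}\,\d s\le\int_0^\infty e^{-cr^2/a}\,\d r\lesssim\sqrt a$; the $\sqrt a$ cancels the $a^{-1/2}$ in $\Lambda$ and yields \eqref{eq:qm-psi-psi'-early-middle}. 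For \eqref{eq:qm-E-early-middle} the same substitution gives $\int_{\earlyt}^t e^{-c(t_*-s)^2/a}(t-s+1)^{-3/2}\,\d s\le e^{-c(t_*-t)^2/a}\int_0^\infty(r+1)^{-3/2}\,\d r=2\,e^{-c(t_*-t)^2/a}$, so the extra $(t+1)^{-1/2}\asymp a^{-1/2}$ converts the $\sqrt a$-cancellation into the factor $1/a$. I expect the deep-absorption tail of the forcing to be the main obstacle: a crude pointwise template bound there loses the Gaussian concentration, hence the decay, so the absorption must be used quantitatively, and this is the one place where a bare invocation of Lemma~\ref{lem:super} does not suffice.
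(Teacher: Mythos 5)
Your overall strategy is the same as the paper's: Duhamel in $s$, apply Lemma~\ref{lem:super} to each slice $v_s = q^s$, then integrate over $s\in[\earlyt,t]$, and your final $s$-integration estimates are correct. But there is a genuine gap in the application of Lemma~\ref{lem:super}, and the ``more delicate argument'' you flag at the end is the symptom of it.

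You propose $\kappa_-=\lambda_*$. That only bounds the forcing on $x<0$ at the single time $s=t_*$. For $s<t_*$ one has, dividing by $\Lambda(s;a)$,
\begin{equation*}
\Lambda(s;a)^{-1}\,(N-V(s,x))\,p(s,x)\;\lesssim\;\exp\!\Big[-\Big(\frac{a}{2s}-\gamma_*\Big)x\Big]\qquad (x<0),
\end{equation*}
and $a/(2s)-\gamma_*>\lambda_*$ strictly whenever $s<t_*$. So the normalized forcing \emph{grows faster} than $e^{-\lambda_* x}$ as $x\to-\infty$; the Gaussian tail does eventually win, but the resulting constant is $e^{\,c(t_*-s)^2/s}$, which is $e^{\Theta(a)}$ when $s$ is near $\earlyt$. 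That constant cannot be absorbed, so $v_s/\Lambda(s)$ is \emph{not} an admissible $w$ in Lemma~\ref{lem:super} with $\kappa_-=\lambda_*$. Consequently the deep-absorption region $x<-K$ is not ``an obstacle requiring a more delicate argument''---it is simply not covered by your template.

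The fix, which is the paper's, is to take $\kappa_-=\tfrac{1}{2\earlyxi}-\gamma_*$, the supremum of $a/(2s)-\gamma_*$ over $s\in[\earlyt,t_*]$. Then the normalized forcing is bounded pointwise by $Ce^{-\kappa_- x}$ for all $x<0$ and all such $s$, with a constant independent of $a$ and $s$, and the constraint $\earlyxi>\tfrac{1}{2(2\sqrt N-\sqrt{N-1})}=\tfrac{1}{2(\sqrt N+\gamma_*)}$ is \emph{exactly} what guarantees $\kappa_-<\sqrt N$, leaving room to pick $\alpha\in(\kappa_-,\sqrt N)$. With this choice, Lemma~\ref{lem:super} applies verbatim to $q^s/[C\Lambda(s;a)]$ (with $\kappa_+=a/(2s)$, uniformly bounded above and below since $\earlyt\le s\le t_*$), and nothing further is needed. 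Once you replace $\kappa_-$ and drop the unspecified ``delicate argument,'' the rest of your proof---in particular the two convolution estimates yielding $\sqrt a$ and $O(1)$ after the substitution $s=t-r$---goes through and agrees with the paper's Lemma~\ref{lem:qs-integrals} combined with \eqref{20may2108}.
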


Recall that~$\qm$ satisfies (\ref{eq:scattering-main}):
\begin{equation*}
  \pdr{\qm}{t} = \pdrr{\qm}{x} - \frac{3}{2\lambda_*(t+1)} \pdr{\qm}{x}
  - V(t,x) \qm + (N-V(t,x)) \one_{[\earlyt, \infty)}(t)p, \quad \qm(0, \anon) = 0.
\end{equation*}
We use the Duhamel formula
\be\label{20may2216}
\qm(t,x)=\int_{\earlyt}^t q^s(t,x) \d s
\ee
to control $\qm$.
Here, for $s \geq \earlyt$, the function $q^s$ satisfies
\begin{equation}\label{eq:q-s}
\bal
&\pdr{q^s}{t} = \pdrr{q^s}{x} - \frac{3}{2\lambda_*(t + 1)} \pdr{q^s}{x} - V(t,x)q^s,~~t>s,\\
& q^s(s, x) = \big[N-V(s, x)\big] p(s, x).
\enbal
\end{equation}
We can control this for all $\earlyt \leq s \leq t$.
\begin{lemma}
  \label{lem:qs-integrals}
  For all $s \ge \earlyt$ and $t \geq s$,
  \begin{equation}
    \label{eq:q-s-bounds-middle}
    \begin{aligned}
      &\int_\R \psi(x)q^s(t,x) \d x \leq C \frac{s^2}{a^2}\Lambda(s;a),\\
      &\int_\R \psi'(x)q^s(t,x) \d x \leq C\frac{s^2}{a^2}  \Lambda(s;a) (t - s + 1)^{-1/2},\\
      &\int_\R \abs{E(t,x)} \psi(x)q^s(t,x) \d x \leq C \frac{s^2}{a^2} \Lambda(s;a) (t + 1)^{-1/2} (t - s + 1)^{-3/2}.
    \end{aligned}
  \end{equation}
\end{lemma}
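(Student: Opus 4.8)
The plan is to apply Lemma~\ref{lem:super} to the function $q^s$, which satisfies the equation \eqref{eq:q-s} with initial data $[N - V(s, \anon)] p(s, \anon)$ posed at time $s$. The operator in \eqref{eq:q-s} is exactly of the form appearing in \eqref{eq:super}, except that the absorption term there is $-\alpha^2 \one_{(-\infty, -K)} w$ rather than $-V(t,x) q^s$. Since $V \geq 0$, we have $-V(t,x) q^s \leq 0$ whenever $q^s \geq 0$ (and $q^s \geq 0$ by the comparison principle applied to \eqref{eq:q-s}, since the initial data $(N - V)p \geq 0$ by Lemma~\ref{lem:front-decay}); moreover $V(t,x) \geq c > 0$ for $x \leq -K$ with $K$ large, by the left-limit behavior $V(t,x)\to N$ as $x\to-\infty$ and \eqref{20mar2720}. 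So $q^s$ is a subsolution of \eqref{eq:super} with $\alpha^2 = c$ for a suitable small $c$ and some $K \geq 0$ depending only on $f$. Thus it remains only to check that the initial data $[N - V(s, \anon)] p(s, \anon)$ is dominated by a function of the form \eqref{eq:super-initial}, and to track the resulting constant $\max\{\kappa_+^{-2}, 1\}$.

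First I would bound the initial data. Using the first inequality in \eqref{20mar2720}, $0 \leq N - V(s, x) \leq B e^{\gamma_* x}$ for $x \leq 0$ and $\leq N$ for all $x$. For $p(s,x)$ I would use the representation \eqref{eq:p-zero}--\eqref{eq:g}: $p(s,x) = \Lambda(s;a) e^{-ax/(2s)} g(s,x)$, where $g(s,x) = \exp\{-[x - \frac{3}{2\lambda_*}\log(s+1)]^2/(4s)\} \leq 1$, and $\Lambda(s;a)$ is given by \eqref{eq:Lambda}. Hence for $x \leq 0$,
\begin{equation*}
  [N - V(s,x)] p(s,x) \leq B \Lambda(s;a) e^{(\gamma_* - a/(2s))x},
\end{equation*}
so $\kappa_- = a/(2s) - \gamma_*$. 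To apply Lemma~\ref{lem:super} we need $\kappa_- < \alpha$; since $s \geq \earlyt = \earlyxi a$ with $\earlyxi > \frac{1}{2(2\sqrt N - \sqrt{N-1})}$, we get $a/(2s) < 2\sqrt N - \sqrt{N-1} = \gamma_* + (2\sqrt N - 2\sqrt{N-1}) < \gamma_* + \alpha$ provided $\alpha$ is chosen accordingly — here one may need to enlarge $K$ so that $V \geq \alpha^2$ on $(-\infty, -K)$ with the required $\alpha$; this is where the precise choice of $\earlyxi$ and Lemma~\ref{lem:front-decay} interact. For $x \geq 0$, I would absorb a fraction of the Gaussian in $g(s,x)$: writing $g(s,x) \leq C e^{-x^2/(8s)}$ (after controlling the $\log(s+1)$ shift, which costs only a constant since $s \lesssim a$ is not assumed — actually $s$ ranges over $[\earlyt, \infty)$, but the cross term is handled by completing the square and the $x^2/(8s)$ absorbs it for $x$ large, with a bounded correction for bounded $x$), and using $e^{-ax/(2s)} \leq 1$, we get $[N-V(s,x)]p(s,x) \leq C N \Lambda(s;a) e^{-x^2/(8s)}$, which matches \eqref{eq:super-initial} with $\kappa_+$ a fixed positive constant (say $\kappa_+ = 1$), so $\max\{\kappa_+^{-2},1\} = 1$.

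Therefore $q^s / (C \Lambda(s;a))$ satisfies the hypotheses of Lemma~\ref{lem:super} with $s$ there equal to $s$ here, and the conclusions \eqref{eq:psi-super}--\eqref{eq:wave-super} give exactly the three bounds in \eqref{eq:q-s-bounds-middle} — except with $\Lambda(s;a)$ in place of $\frac{s^2}{a^2}\Lambda(s;a)$. The extra factor $s^2/a^2$ must come from a more careful accounting of the initial data's mass on $\R_+$: the bound $[N-V(s,x)]p(s,x)\leq Ce^{-x^2/(8s)}$ for $x\geq 0$ is wasteful because $N - V(s,x) \leq B e^{\gamma_* x}$ is only small for $x < 0$, while near $x = 0$ the product $[N-V]p$ is concentrated (recall $p(s,\anon)$ is a Gaussian centered at $\nu(s) = -a + \frac{3}{2\lambda_*}\log(s+1) + 2\sqrt N s$, which for $s \in [\earlyt, t_*]$ is still negative, of size $\asymp -(t_* - s) - a \lesssim -a$ near $s = \earlyt$). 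So the mass of $[N-V]p$ that actually sits in $\R_+$ — the quantity $\int_0^\infty x [N-V(s,x)]p(s,x)\d x$ driving \eqref{20may1524} — is far smaller than $\Lambda(s;a)$: pairing the Gaussian tail $p(s,x) \asymp \Lambda(s;a) e^{-(x+|\nu(s)|)^2/(4s)}$ with the weight $x$ on $x \geq 0$, and using $|\nu(s)| \gtrsim a - s$, one extracts a Gaussian factor $e^{-\nu(s)^2/(4s)}$; but $N - V$ also cuts off the left side. The cleanest route is: instead of applying Lemma~\ref{lem:super} directly to $q^s$ at time $s$, first evolve the initial layer a little and exploit that the relevant $\R_+$-mass $\int_0^\infty x [N - V(s,x)] p(s,x)\,\d x$ is $\lesssim \frac{s^2}{a^2}\Lambda(s;a)$ — this polynomial gain is the arithmetic heart of the lemma and I expect it to be the main obstacle. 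Concretely, I would split $[N-V(s,x)]p(s,x)$ at $x = 0$: on $x \leq 0$ it contributes (after the Gaussian-heat evolution on the half-line, absorbing mass from $\R_-$) a term controlled by Lemma~\ref{lem:super} with the genuine $\kappa_- = a/(2s) - \gamma_*$, giving decay $e^{-\kappa_-^2 \cdot(\text{something})}$; on $x \geq 0$ one computes $\int_0^\infty x\, e^{\gamma_* x}\one_{x\le 0}\cdots$ — rather, one uses that the overlap of the region $\{N - V \text{ not tiny}\}$ (i.e. $x \gtrsim -C$) with $\{p \text{ not tiny}\}$ (i.e. $x \approx \nu(s) \ll 0$) is exponentially small, and the surviving piece near $x \asymp 0$ has $\R_+$-mass $\asymp \Lambda(s;a) e^{-\nu(s)^2/(4s)} \cdot s^{3/2}$, and one checks $e^{-\nu(s)^2/(4s)} \lesssim \frac{s^2}{a^2}$ — here $\nu(s)$ being of order $-(a - 2\sqrt N s) + O(\log a)$ for $s$ near $t_* = a/(2\sqrt N)$ makes $\nu(s)^2/(4s)$ of order $(t_* - s)^2/a$, and combined with the plain Gaussian prefactor $1/\sqrt{4\pi s}$ one gets the advertised polynomial factor after a short computation. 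Once the improved $\R_+$-mass bound $\lesssim \frac{s^2}{a^2}\Lambda(s;a)$ is in hand, re-running the heat-kernel estimate \eqref{20may1524} (equivalently, a variant of Lemma~\ref{lem:super} with $\max\{\kappa_+^{-2},1\}$ replaced by this mass bound) yields the three displays of \eqref{eq:q-s-bounds-middle} directly, with the $(t-s+1)^{-1/2}$ and $(t+1)^{-1/2}(t-s+1)^{-3/2}$ factors coming from $\int \psi' w$ and $\int |E|\psi w$ exactly as in Lemma~\ref{lem:super}.
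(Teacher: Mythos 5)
Your plan—apply Lemma~\ref{lem:super} to $q^s$ posed at time $s$, with the initial data bounded via Lemma~\ref{lem:front-decay} and the representation \eqref{eq:p-zero}—is exactly the paper's strategy, and your handling of the $\R_-$ side ($\kappa_- = a/(2s) - \gamma_*$, which is uniformly below $\sqrt N$ because $\earlyxi > \tfrac{1}{2(2\sqrt N - \sqrt{N-1})}$, together with the choice of $\alpha$ and $K$) is correct. The gap is on the $\R_+$ side: you bound $e^{-ax/(2s)} \le 1$ and take $\kappa_+$ to be a fixed constant, which gives $\max\{\kappa_+^{-2},1\} = 1$ and loses the advertised factor $s^2/a^2$. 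The paper instead keeps the decaying exponential as part of the $\R_+$ initial bound, writing $q^s(s,x) \le C\Lambda(s;a)\exp(-\tfrac{a}{2s}x)\exp(-\tfrac{x^2}{8s})$ for $x\ge 0$, and applies Lemma~\ref{lem:super} with $\kappa_+ = a/(2s)$. Then $\max\{\kappa_+^{-2},1\} = \max\{4s^2/a^2,\,1\}$, which is $\le C\,s^2/a^2$ uniformly for $s \ge \earlyt = \earlyxi a$ since $s^2/a^2 \ge \earlyxi^2 > 0$. That single adjustment produces the polynomial factor in all three displays of \eqref{eq:q-s-bounds-middle}.

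You correctly notice the discrepancy and that the $s^2/a^2$ must reflect the smallness of the $\R_+$-mass of $q^s(s,\anon)$ relative to $\Lambda(s;a)$ — indeed $\kappa_+^{-2}$ is precisely the scaling of $\int_0^\infty x\,e^{-\kappa_+ x}\,\dn x$. But the detour you then sketch (splitting at $x=0$, recentering at $\nu(s)$, and claiming a surviving $\R_+$-mass $\asymp \Lambda(s;a)\,e^{-\nu(s)^2/(4s)}\,s^{3/2}$) both overcomplicates the argument and does not obviously cover the whole range $s \ge \earlyt$: for $s > t_*$ one has $\nu(s) > 0$, so $e^{-\nu(s)^2/(4s)}$ is no longer a useful small factor, whereas the $\kappa_+ = a/(2s)$ bookkeeping handles all $s$ at once. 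The fix is simply to take $\kappa_+ = a/(2s)$ when matching to \eqref{eq:super-initial}, rather than reconstructing the $\R_+$-mass by hand afterward.
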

\begin{proof}
  We first get rid of the logarithmic term in \eqref{eq:p-zero} and \eqref{eq:g}.
  We claim that
  \begin{equation*}
    \bal
    p(s,x) &= \Lambda(s;a) e^{-ax/(2s)} \exp\Big[-\farc{1}{4s}\Big(x-\farc{3}{2\lambda_*}\log(s+1)\Big)^2\Big]\\
    &\leq C \Lambda(s;a) e^{-ax/(2s)} \exp\Big(-\farc{x^2}{8s}\Big).
    \enbal
  \end{equation*}
After all, an elementary study of the quadratic polynomial shows that for all $\al > 0$ and $\eps \in (0, 1)$, there there exists $C$ depending on $\al$ and $\eps$ such that
\be
\label{eq:gaussian mean bound}
\frac{[z+\alpha \log(t+1)]^2}{4t}\ge  (1-\eps) \frac{z^2}{4t}-C
\qquad\text{for all $z\in\R$ and $t>0$}.
\ee

By Lemma~\ref{lem:front-decay}, we obtain
\begin{equation}
  \label{eq:forcing-bound-middle}
  \Lambda(s; a)^{-1}q^s(s, x) = \Lambda(s; a)^{-1}[N - V(s, x)] p(s, x) \leq
  \begin{cases}
    C\exp\left[-\left(\frac{a}{2s} - \gamma_*\right)x\right] & \textrm{for } x < 0,\\
    C\exp\left(-\frac{ax}{2s}\right)\exp\Big(-\farc{x^2}{8s}\Big) & \textrm{for } x \geq 0.
  \end{cases}
\end{equation}
Since $s \ge\earlyt$, we have
\begin{equation}
  \label{eq:exponents}
  \frac{a}{2s} - \gamma_* \leq \frac{1}{2\earlyxi} - \gamma_* \eqqcolon
\kappa_- \in \big(0, \sqrt{N}\big).
\end{equation}
Let us fix $\al \in \big(\kappa_-, \sqrt{N}\big)$.
By Lemma~\ref{lem:front-decay}, there exists $K \geq 0$ such that
\begin{equation}
  \label{eq:front-big}
  V(t,x) \geq \al^2 \one_{(-\infty, -K)}(x).
\end{equation}
We see from \eqref{eq:forcing-bound-middle} and \eqref{eq:front-big} that
${q^s}/[{C \Lambda(s; a)}]$
satisfies the hypotheses of Lemma~\ref{lem:super}, with $\kappa_-$
given in \eqref{eq:exponents} and $\kappa_+=a/(2s)$.
The bounds in \eqref{eq:q-s-bounds-middle} follow from Lemma~\ref{lem:super}
and $s\ge\earlyt$.
\end{proof}

We can now control $\qm$ in terms of $q^s$.

\begin{proof}[Proof of Lemma~\ref{lem:qm-early-middle}]
In light of \eqref{20may2216}, we need to integrate (\ref{eq:q-s-bounds-middle}) over $s \in [\earlyt, t]$ for $t \in [\earlyt, t_*]$.
For $s\ge \earlyt$, the definition \eqref{eq:Lambda} of $\Lambda$ implies
\begin{equation*}
\Lambda(s;a) \le \farc{C}{\sqrt{a}} {(s+1)^{{3a}/{(4\lambda_* s)}}} e^{-a\theta(s/a)}
\end{equation*}
for the rate function
\begin{equation}
  \label{eq:rate-function}
  \theta(\xi) = N \xi + \frac{1}{4\xi} \quad \textrm{for } \xi > 0.
\end{equation}
This strictly convex function is minimized at $\xi_* = 1/(2 \sqrt{N})$, so there exists $c > 0$ such that
\begin{equation*}
\theta(\xi)=N\xi+\farc{1}{4\xi}\ge \theta(\xi_*)+c(\xi-\xi_*)^2 =\sqrt{N}+c(\xi-\xi_*)^2
\end{equation*}
for all $\xi \in (0, 1]$.
Therefore,
\be
\label{20may2102}
\Lambda(s;a) \le \farc{C}{\sqrt{a}}  {(s+1)^{{3a}/{(4\lambda_* s)}}}
e^{-\sqrt{N}a -\frac{c( t_*-s)^2}{a}}
\ee
for all $s \in [\earlyt, a]$.

We next handle the polynomial prefactor, which we write as
\begin{equation*}
  (s+1)^{{3a}/{(4\lambda_* s)}} = \exp\left[\frac{3 a}{4 \lambda_* s} \log(s + 1)\right].
\end{equation*}
For any $\eps > 0$, we employ the Peter--Paul inequality $2AB\le\eps A^2+B^2/\eps$:
\begin{equation*}
  a \left(\frac{1}{s} - \frac{1}{t_*}\right)\log(s + 1) \leq \frac{C(t_* - s)\log a}{a} \leq \frac{\eps(t_* - s)^2}{a} + \frac{C^2\log^2 a}{4 \eps a} \leq \frac{\eps(t_* - s)^2}{a} + C_\eps.
\end{equation*}
Exponentiating, this implies that
\begin{equation}
  \label{eq:fix-polynomial-exponent}
  (s+1)^{{3a}/{(4\lambda_* s)}} \leq C_\eps (s + 1)^{3a/(4 \lambda_* t_*)}
e^{\frac{\eps (t_*-s)^2}{a}} = C_\eps (s + 1)^{3\sqrt{N}/(2 \lambda_*)}
e^{\frac{\eps (t_*-s)^2}{a}}.
\end{equation}
Taking $\eps \ll 1$, we can absorb the last factor into the Gaussian term in \eqref{20may2102}.
Thus, \eqref{20may2102} and \eqref{eq:fix-polynomial-exponent} yield
\begin{equation}
  \label{20may2108}
  \Lambda(s;a) \le \frac{C}{\sqrt{a}} (s+1)^{3\sqrt{N}/(2\lambda_*)}
e^{-\sqrt{N}a - \frac{c(t_*-s)^2}{a}} \leq \frac{C}{\sqrt{a}} (s
+ 1)^{3\sqrt{N}/(2\lambda_*)} e^{-\sqrt{N}a - \frac{c(t_*-s)^2}{a}}
\end{equation}
for all $s \in [\earlyt, a]$.
We now combine (\ref{eq:q-s-bounds-middle}) and (\ref{20may2108}) to control the contribution of $\qm$.
At these times, we do not need the distinction between $\psi$ and $\psi'$ in \eqref{eq:q-s-bounds-middle}:
\begin{equation*}
  \int_\R [\psi(x)+\psi'(x)]\qm(t,x) \d x=\int_{\earlyt}^t \int_\R [\psi(x) + \psi'(x)]q^s(t,x) \d x \ds s \leq C \int_{\earlyt}^t \Lambda(s;a) \d s.
\end{equation*}
Using \eqref{20may2108} and changing variables via $\eta = \frac{t_* - s}{\sqrt{a}}$, we obtain
\begin{equation*}
  \begin{aligned}
    \int_\R [\psi(x)+\psi'(x)]\qm(t,x) \d x &\leq C (t + 1)^{3\sqrt{N}/(2\lambda_*)} e^{-\sqrt{N}a} \int_{(t_* - t)/\sqrt{a}}^\infty e^{-c\eta^2} \d \eta\\
    &\leq C (t + 1)^{3\sqrt{N}/(2\lambda_*)} e^{-\sqrt{N}a}
e^{-\frac{c(t_*-t)^2}{a}}.
\end{aligned}
\end{equation*}
This is \eqref{eq:qm-psi-psi'-early-middle}.
Finally, the third line in (\ref{eq:q-s-bounds-middle}) and (\ref{20may2108}) imply:
\begin{equation}
  \label{eq:qm-E-integral}
  \int_\R \abs{E(t,x)} \psi(x)\qm(t,x) \d x \leq \frac{C}{a}
(t+1)^{{3\sqrt{N}}/{(2\lambda_*)}} e^{-\sqrt{N}a } \int_{\earlyt}^t (t
- s + 1)^{-3/ 2} e^{-{c(t_*-s)^2}/{a}} \d s.
\end{equation}
Since $(\abs{y} + 1)^{-3/ 2}$ is integrable, the integral on the right side
of \eqref{eq:qm-E-integral} is bounded by $C e^{-( t_*-t)^2/a}$.
Hence
\begin{equation*}
  \int_\R \abs{E(t,x)} \psi(x)\qm(t,x) \d x \leq \frac{C}{a}
(t+1)^{{3\sqrt{N}}/{(2\lambda_*)}} e^{-\sqrt{N}a } e^{-{c( t_*-t)^2}/{a}}.
\end{equation*}
We have thus verified \eqref{eq:qm-E-early-middle} and completed the proof of Lemma~\ref{lem:qm-early-middle}.
\end{proof}

\subsection*{The proof of Lemma~\ref{lem-may1506}}

We wish to understand $I(t)$ and $\dot{I}(t)$ given by \eqref{20mar2626} and \eqref{mar1912}, respectively.
Recall that $r=p+\qe+\qm$.
We use Lemmas~\ref{lem-may1902}, \ref{lem:early-corrector}, and \ref{lem:qm-early-middle} to control the terms with $p$, $\qe$, and $\qm$, respectively.
These lemmas yield
\begin{equation*}
  I(t)=\int_\R \psi(x) r(t,x) \d x = \Ufactor \gamma_*e^{-\gamma_*\bar x_0}(t + 1)^{{3\sqrt{N}}/{(2\lambda_*)}} 
  e^{-\sqrt{N}a}\Big[1 +  O\Big(e^{-\frac{c(t_*-t)^2}{a}}\Big)\Big]
\end{equation*}
and
\begin{equation*}
  \bal
  \frac{\d I}{\d t}(t)&=
  \farc{3}{2\lambda_*(t+1)} \int_\R {\psi'(x) r(t,x)} \d x +
  \int_\R E(t,x) \psi(x) r(t,x) \d x\\
  &= \farc{3}{2\lambda_*(t+1)}\sqrt{N}\Ufactor \gamma_*e^{-\gamma_*\bar x_0}(t + 1)^{{3\sqrt{N}}/{(2\lambda_*)}} 
  e^{-\sqrt{N}a} \Big[1 + O\Big(e^{-\frac{c(t_*-t)^2}{a}}\Big)\Big].
  \enbal
\end{equation*}
Finally, the lower bound \eqref{20may1925} and \eqref{eq:q-nonneg} imply that ${\dot I(t)}/{I(t)}$ doesn't become singular as $t \to t_*$.
Therefore
\begin{equation*}
  \frac{\dot I(t)}{I(t)} = \frac{3\sqrt{N}}{2\lambda_*(t + 1)} 
  \left[1 + O\left(e^{-\frac{c(t_*-t)^2}{a}}\right)\right]
\end{equation*}
when $t \in [0, t_*]$, and the proof of Lemma~\ref{lem-may1506} is complete.\hfill\qed

\section{Middle times}
\label{sec:late-modern}

Here, we consider the time interval $t \in [t_*, a]$.
Now, the story changes: the main corrector $\qm(t,x)$ becomes the dominant term 
in $I(t)$ and $\dot{I}(t)$, though, of course, the homogeneous term $p(t,x)$ 
is comparable to it when $t - t_* = O\big(\sqrt{a}\big)$.
Again, Lemma~\ref{lem:early-corrector} shows that the contributions of $\qe$ are negligible relative to those of $\qm$ in Lemma~\ref{lem:qm-late-middle} below.

We will prove the following.
\begin{lemma}
  \label{lem-may2202}
  There exist $c>0$ and $C>0$ such that for all  $t \in [t_*, a]$,
  \begin{equation}\label{eq:ratio-late-middle}
    \frac{|\dot{I}(t)|}{I(t)} \leq \frac{C}{a} 
    \left[(t - t_* + 1)^{-1/2} +e^{-{c(t - t_*)^2}/{a}}\right].
  \end{equation}
\end{lemma}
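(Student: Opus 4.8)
The plan is to substitute the decomposition $r=p+\qe+\qm$ into the identity \eqref{mar1912} for $\dot I(t)$, to bound the contribution of each of the three pieces to $\int_\R \psi'(x) r(t,x)\d x$ and to $\int_\R |E(t,x)|\psi(x) r(t,x)\d x$, and to divide by a lower bound for $I(t)=\int_\R \psi(x) r(t,x)\d x$. Two preliminary remarks: on $t\in[t_*,a]$ the three ``time factors'' $\earlyt+1$, $t_*+1$ and $t+1$ are all of order $a$, so the polynomial prefactor $(t+1)^{3\sqrt N/(2\lambda_*)}$ is inert and only algebraic or Gaussian factors in $t-t_*$ carry information; and the $\qe$ terms, already controlled by Lemma~\ref{lem:early-corrector}, carry an extra $e^{-ca}$ and are dominated by everything else. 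The free part $p$ is treated exactly as in Lemma~\ref{lem-may1902}: the only change is that for $t\ge t_*$ the Gaussian mean $\wshift(t)=-a+\tfrac{3}{2\lambda_*}\log(t+1)+2\sqrt N t$ of \eqref{eq:mu} is now positive, of size $2\sqrt N(t-t_*)+O(\log a)$, so integrating a Gaussian of that mean against the weights $\psi e^{-\sqrt N x}$ and $\psi' e^{-\sqrt N x}$ (which, up to exponentially small tails, live on $\R_-$) costs a factor $e^{-\wshift(t)^2/(4t)}\le C e^{-c(t_*-t)^2/a}$, using $t\le a$. This yields $\int_\R(\psi+\psi')p\d x\le C(t+1)^{3\sqrt N/(2\lambda_*)}e^{-\sqrt N a}e^{-c(t_*-t)^2/a}$ and, through \eqref{20may1924} (whose proof does not use the sign of $\wshift$), $\int_\R|E|\psi p\d x\le\tfrac{C}{t+1}(t+1)^{3\sqrt N/(2\lambda_*)}e^{-\sqrt N a}e^{-c(t_*-t)^2/a}$.

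For the main corrector I would use the Duhamel representation \eqref{20may2216}, $\qm(t,\cdot)=\int_{\earlyt}^t q^s(t,\cdot)\d s$, together with the bounds of Lemma~\ref{lem:qs-integrals} (valid for all $\earlyt\le s\le t$) and the estimate \eqref{20may2108} for $\Lambda(s;a)$, which since $s\asymp a$ reads $\Lambda(s;a)\le\tfrac{C}{\sqrt a}(t+1)^{3\sqrt N/(2\lambda_*)}e^{-\sqrt N a}e^{-c(t_*-s)^2/a}$. This bounds $\int_\R\psi'\qm\d x$ by $\tfrac{C}{\sqrt a}(t+1)^{3\sqrt N/(2\lambda_*)}e^{-\sqrt N a}\int_{\earlyt}^t e^{-c(t_*-s)^2/a}(t-s+1)^{-1/2}\d s$, and similarly $\int_\R|E|\psi\qm\d x$ by the same expression with $(t-s+1)^{-3/2}$ and an extra $(t+1)^{-1/2}\asymp a^{-1/2}$, while $\int_\R\psi\qm\d x$ has no extra $s$-factor. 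The feature that distinguishes this regime from the early one is that $t$ may be far from the peak $s\approx t_*$ of $\Lambda$, so the weight $(t-s+1)^{-1/2}$ is genuinely active; I would split the $s$-integral at $s=(t+t_*)/2$. On $[\earlyt,(t+t_*)/2]$ one has $e^{-c(t_*-s)^2/a}\le e^{-c(t-t_*)^2/a}$ and the remaining integral is $O(\sqrt a)$, while on $[(t+t_*)/2,t]$ one has $(t-s+1)^{-1/2}\le C(t-t_*+1)^{-1/2}$ and $\int e^{-c(t_*-s)^2/a}\d s=O(\sqrt a)$; hence $\int_\R\psi'\qm\d x\le C(t+1)^{3\sqrt N/(2\lambda_*)}e^{-\sqrt N a}[(t-t_*+1)^{-1/2}+e^{-c(t-t_*)^2/a}]$ and, the integrable weight $(t-s+1)^{-3/2}$ now absorbing one $\sqrt a$ so that the extra $a^{-1/2}$ survives, $\int_\R|E|\psi\qm\d x\le\tfrac{C}{a}(t+1)^{3\sqrt N/(2\lambda_*)}e^{-\sqrt N a}[(t-t_*+1)^{-1/2}+e^{-c(t-t_*)^2/a}]$. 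These are the middle-time counterparts of Lemma~\ref{lem:qm-early-middle}, which I would record as Lemma~\ref{lem:qm-late-middle}.

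Adding the $p$, $\qm$ and $\qe$ contributions and using $(t+1)^{-1}\asymp a^{-1}$ on $[t_*,a]$, the identity \eqref{mar1912} will give $|\dot I(t)|\le\tfrac{C}{a}(t+1)^{3\sqrt N/(2\lambda_*)}e^{-\sqrt N a}[(t-t_*+1)^{-1/2}+e^{-c(t-t_*)^2/a}]$. To conclude \eqref{eq:ratio-late-middle} it remains to bound $I(t)$ below by $c(t+1)^{3\sqrt N/(2\lambda_*)}e^{-\sqrt N a}$ on $[t_*,a]$; at $t=t_*$ this is \eqref{20may1925}. For $t>t_*$ I would close it by a continuity argument: on the maximal interval $[t_*,t_0]$ on which $I\ge\tfrac12 I(t_*)$, the bound on $|\dot I|$ divided by $\tfrac12 I(t_*)\asymp(t+1)^{3\sqrt N/(2\lambda_*)}e^{-\sqrt N a}$ gives $|\dot I(t)/I(t)|\le\tfrac{C}{a}[(t-t_*+1)^{-1/2}+e^{-c(t-t_*)^2/a}]$, and integrating from $t_*$ to $t_0\le a$ shows $|\log(I(t_0)/I(t_*))|\le C/\sqrt a\ll 1$, whence $I(t_0)>\tfrac12 I(t_*)$ and the interval must be all of $[t_*,a]$. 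Dividing the bound on $|\dot I(t)|$ by this lower bound then yields \eqref{eq:ratio-late-middle}.

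I expect the genuine obstacle to be exactly this lower bound on $I(t)$ for $t>t_*$: once $p$ no longer carries the bulk of the adjoint-weighted mass, one has to certify that the mass which has escaped to $\R_+$ (carried by $\qm$) really is of the expected order $a^{3\sqrt N/(2\lambda_*)}e^{-\sqrt N a}$ and has not been eroded by the absorbing potential or lost through the boundary layer at $x=0$. The continuity argument above sidesteps this once the upper bound on $|\dot I|$ is in hand; an alternative is to prove a matching \emph{lower} bound for $\int_\R\psi(x)q^s(t,x)\d x$ in Lemma~\ref{lem:qs-integrals}, guided by the half-line heat-kernel picture \eqref{20may1524}. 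A secondary technical point is checking that the $s$-integrals above really split into exactly the two advertised terms, which is where the convexity of the rate function $\theta$ of \eqref{eq:rate-function} and the polynomial--Gaussian inequality \eqref{eq:gaussian mean bound} get used.
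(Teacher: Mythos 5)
Your proposal is correct, and its treatment of the upper bound $|\dot I(t)| \leq \frac{C}{a}(t+1)^{3\sqrt N/(2\lambda_*)}e^{-\sqrt N a}[(t-t_*+1)^{-1/2}+e^{-c(t-t_*)^2/a}]$ follows the paper's route: the same decomposition $r=p+\qe+\qm$, the same Duhamel/Lemma~\ref{lem:qs-integrals} argument for $\qm$ with the split at $s=(t+t_*)/2$ (the paper's $Z_\alpha$ computation), and Lemma~\ref{lem:early-corrector} for $\qe$. (A cosmetic difference: for $p$ you extract the Gaussian factor from the positivity of the mean $\wshift(t)$ in the representation \eqref{20may1404}, whereas the paper instead uses the form \eqref{eq:p-zero}, bounds $\int(\psi+\psi')p\le C\sqrt t\,\Lambda(t;a)$, and pulls the Gaussian factor out of $\Lambda$ via \eqref{20may2108}; both yield the same estimate.) Where you genuinely diverge is the lower bound $I(t)\ge c\,a^{3\sqrt N/(2\lambda_*)}e^{-\sqrt N a}$ on $[t_*,a]$. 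The paper proves this (in \eqref{20may2244}) by a subsolution construction: it bounds $q^s$ from below by a half-line Dirichlet solution (Lemma~\ref{lem:Dirichlet-lower}, adapted from Lemma~2.2 of \cite{HNRR}) and integrates. You instead observe that the established upper bound on $|\dot I|$ is an \emph{absolute} bound (not proportional to $I$), so direct integration from $t_*$ gives $I(t)\ge I(t_*)-\int_{t_*}^t|\dot I|\ge I(t_*)\bigl(1-Ca^{-1/2}\bigr)$, using \eqref{20may1925} at $t=t_*$. This is correct and, for the purposes of Lemma~\ref{lem-may2202}, shorter than the paper's argument — you do not even need the maximal-interval framing, as the bound on $|\dot I|$ does not involve $I$. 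One thing your route buys less cheaply: the paper's subsolution gives \eqref{20may2244} for \emph{all} $t\ge t_*$ in one stroke, which is reused in the late-time Lemma~\ref{lem-may2402}; with your approach that extension requires first proving the late-time upper bound on $|\dot I|$ (Lemmas~\ref{lem:p-late} and \ref{lem:qm-late}, which indeed do not depend on the lower bound), so it still works but the dependency chain is longer.
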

After integration, Lemma~\ref{lem-may2202} implies that
\begin{equation}\label{eq-middle-int}
\log\frac{I(a)}{I(t_*)}= O(a^{-1/2}).
\end{equation}

\subsection*{The free contribution}

We first bound the contributions of the free term $p$.
\begin{lemma}
  \label{lem:p-late-middle}
  There exist $c > 0$ and $C > 0$ such that for all $t \in [t_*, a]$,
  \begin{equation*}
    \int_\R [\psi(x) + \psi'(x)] p(t, x) \d x \leq C (t + 1)^{3\sqrt{N}/(2\lambda_*)} e^{-\sqrt{N} a} e^{-\frac{c(t - t_*)^2}{a}}
  \end{equation*}
  and
  \begin{equation*}
    \int_\R \abs{E(t, x)} p(t, x) \d x \leq \frac{C}{a} (t + 1)^{3\sqrt{N}/(2\lambda_*)} e^{-\sqrt{N} a} e^{-\frac{c(t - t_*)^2}{a}}.
  \end{equation*}
\end{lemma}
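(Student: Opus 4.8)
The idea is to mirror the proof of Lemma~\ref{lem-may1902}, the only structural change being that on $[t_*,a]$ the Gaussian carried by $p$ sits to the \emph{right} of the origin rather than to the left. From the rewriting \eqref{20may1404} of $p$ we have, for each $t>0$,
\begin{equation*}
  \psi(x)p(t,x)=\frac{e^{-\sqrt N a}}{\sqrt{4\pi t}}(t+1)^{3\sqrt N/(2\lambda_*)}\,\zeta(x)\exp\Big\{-\frac{1}{4t}\big[x-\wshift(t)\big]^2\Big\},\qquad \zeta(x)=\psi(x)e^{-\sqrt N x},
\end{equation*}
and the analogous identity with $\psi,\zeta$ replaced by $\psi',\tilde\zeta$ where $\tilde\zeta(x)=\psi'(x)e^{-\sqrt N x}$; by \eqref{eq:adjoint-boundary} and \eqref{eq:zeta-indicator}, $\zeta$ and $\tilde\zeta$ are bounded and coincide with a positive multiple of $\one_{\R_-}$ up to an $O(e^{-c\abs{x}})$ error. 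Since $2\sqrt N t_*=a$, the Gaussian mean is
\begin{equation*}
  \wshift(t)=2\sqrt N(t-t_*)+\frac{3}{2\lambda_*}\log(t+1)\ \ge\ 2\sqrt N(t-t_*)\ \ge\ 0\qquad\text{for }t\in[t_*,a],
\end{equation*}
which is the crux of both estimates.

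For the first bound I would split $\zeta$ as above inside $\int_\R\psi p\,\dn x$. The indicator part contributes $\int_{\R_-}\exp\{-[x-\wshift(t)]^2/(4t)\}\,\dn x/\sqrt{4\pi t}\le e^{-\wshift(t)^2/(4t)}$, a Gaussian tail bound valid precisely because $\wshift(t)\ge 0$. For the $O(e^{-c\abs{x}})$ remainder I would break the integral at $x=\wshift(t)/2$, exactly as in the proof of Lemma~\ref{lem-may1902}: on $\{x\le\wshift(t)/2\}$ the Gaussian is $\le e^{-\wshift(t)^2/(16t)}$ and $e^{-c\abs{x}}$ integrates to $O(t^{-1/2})$, while on $\{x\ge\wshift(t)/2\}$ one has $e^{-c\abs{x}}\le e^{-c\wshift(t)/2}$ and the Gaussian integrates to at most $1$. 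Using $\wshift(t)\ge 2\sqrt N(t-t_*)$ and $t\le a$, each of $e^{-\wshift(t)^2/(4t)}$, $e^{-\wshift(t)^2/(16t)}$ and $e^{-c\wshift(t)/2}$ is $\le Ce^{-c(t-t_*)^2/a}$ (for the last because $(t-t_*)/a\le 1$). This gives the stated bound for $\int_\R\psi p\,\dn x$, and the same argument with $\tilde\zeta$ gives it for $\int_\R\psi' p\,\dn x$.

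The second bound is the quantity that enters $\dot I(t)$ through \eqref{mar1912}, i.e.\ $\int_\R\abs{E(t,x)}\psi(x)p(t,x)\,\dn x$, and it asks for an extra full power of $1/a$. Here I would use the pointwise estimate $\abs{E(t,x)}\le Ce^{-c\abs{x}}(t+1)^{-1/2}$ from \eqref{20may1932}, so that
\begin{equation*}
  \int_\R\abs{E(t,x)}\psi(x)p(t,x)\,\dn x\le\frac{C}{\sqrt{t+1}}\,e^{-\sqrt N a}(t+1)^{3\sqrt N/(2\lambda_*)}\int_\R\frac{e^{-c\abs{x}}\zeta(x)}{\sqrt{4\pi t}}\exp\Big\{-\frac{[x-\wshift(t)]^2}{4t}\Big\}\,\dn x.
\end{equation*}
The gain is that $e^{-c\abs{x}}\zeta(x)$ is now \emph{integrable}, with exponential decay at both ends, so completing the square and writing the Gaussian as $e^{-\wshift(t)^2/(4t)}e^{-x^2/(4t)}e^{\wshift(t)x/(2t)}$ reduces everything to the uniform bound $\int_\R e^{-c\abs{x}}\zeta(x)e^{\wshift(t)x/(2t)}\,\dn x\le C$ for $t\in[t_*,a]$. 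This holds for $a$ large because $\wshift(t)/(2t)=\sqrt N-a/(2t)+O(t^{-1}\log(t+1))$, and the term $a/(2t)\ge 1/2$ dominates the logarithmic correction, so $0<\wshift(t)/(2t)<\sqrt N$; in particular $\wshift(t)/(2t)$ stays strictly below the right-hand decay rate of $e^{-c\abs{x}}\zeta(x)$ and above the negative of its left-hand rate, which is exactly what keeps the weighted integral bounded. Consequently the last integral is $\le Ct^{-1/2}e^{-\wshift(t)^2/(4t)}\le Ct^{-1/2}e^{-c(t-t_*)^2/a}$, and multiplying by the $(t+1)^{-1/2}$ coming from $E$ and using $t\ge t_*=a/(2\sqrt N)$ turns $\big(t(t+1)\big)^{-1/2}$ into $C/a$, which is the desired bound.

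The main obstacle is precisely this last gain: a crude $\abs{E}\le C(t+1)^{-1/2}$ combined with the first estimate only yields $1/\sqrt a$. Retaining the spatial decay of $E$ is what renders the adjoint weight $e^{-c\abs{x}}\zeta$ integrable and thereby supplies the second factor $t^{-1/2}$; one must then verify that the Gaussian mean $\wshift(t)$ is not so large on $[t_*,a]$ that the factor $e^{\wshift(t)x/(2t)}$ overwhelms this integrability, which is the content of the inequality $\wshift(t)/(2t)<\sqrt N$ noted above.
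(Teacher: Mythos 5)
Your proof is correct and proceeds by essentially the same argument as the paper's: both hinge on the fact that the Gaussian centre $\nu(t)$ of $p$ lies in $\R_+$ for $t\ge t_*$ (equivalently $\sqrt N - a/(2t)\ge 0$), and both extract the extra factor of $t^{-1/2}\sim a^{-1/2}$ over the first estimate by exploiting the spatial decay $|E(t,x)|\le Ce^{-c|x|}(t+1)^{-1/2}$. The paper's version is terser — it bounds $\int[\psi+\psi']p$ by $C\sqrt t\,\Lambda(t;a)$ and $\int|E|\psi\,p$ by $C\Lambda(t;a)/\sqrt t$ via pointwise estimates on $\R_\pm$ and then quotes the Gaussian bound \eqref{20may2108} on $\Lambda$ — whereas you rederive the tail estimate explicitly from \eqref{20may1404}, more closely mirroring the proof of Lemma~\ref{lem-may1902}; both are fine. (One small point: the integrand in the second displayed inequality of the lemma should read $|E(t,x)|\psi(x)p(t,x)$; the statement has a typo, and you correctly estimate the $\psi$-weighted quantity, which is what appears in \eqref{mar1912}.)
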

\begin{proof}
Recall expression \eqref{eq:p-zero} for $p(t,x)$.
We consider $(\psi + \psi')p$ separately on $\R_\pm$.
When $x\le 0$, we use $\psi(x)+\psi'(x)\le C\exp(\sqrt N x)$ from
(\ref{eq:adjoint-boundary}), and obtain
\begin{equation}
  \label{20may2206}
  [\psi(x)+\psi'(x)] p(t, x) \leq C \Lambda(t;a) 
  \exp\left[\left(\sqrt{N} - \frac{a}{2t}\right)x\right]g(t,x) \quad \textrm{for } x \leq 0.
\end{equation}
Now $t\ge t_*$, so $\sqrt N-a/(2t)\ge0$.
Also, \eqref{eq:g} shows that $\int_\R g(t,x)\d x= \sqrt{4 \pi t}$.
It follows that
\begin{equation}\label{20may2220}
\int_{\R_-} [\psi(x)+{\psi'}(x)] p(t, x) \d x 
\le C \Lambda(t;a) \int_\R g(t, x) \d x \leq C \sqrt{t} \Lambda(t; a) \quad \textrm{for all } t \geq t_*.
\end{equation}
When $x \geq 0$, we use $\psi(x)+\psi'(x)\le C(1+x)$ from
(\ref{eq:adjoint-boundary}) and $t \leq a$ to obtain
\begin{equation}\label{20may2212}
[\psi(x)+\psi'(x)] p(t, x) \leq C (1+x)\Lambda(t;a) e^{-ax/(2t)}
\leq C(1+x) \Lambda(t;a) e^{-x/2} \quad \textrm{for } x \geq 0.
\end{equation}
So
\begin{equation}\label{lmtpsipos}
\int_{\R_+} [\psi(x)+\psi'(x)] p(t, x) \d x \leq C \Lambda(t;a).
\end{equation}

We now turn to the error term with $E$.
Recalling the bound \eqref{20may1932} on $E$, \eqref{20may2206} and \eqref{20may2212} imply
\begin{equation}
  \label{eq:pwavelambda}
  \int_\R |E(t,x)|\psi(x)p(t,x)\d x \leq \frac{C \Lambda(t; a)}{\sqrt{t + 1}}  \int_\R e^{-c \abs{x}} \d x \leq \frac{C \Lambda(t; a)}{\sqrt{t}}.
\end{equation}
The lemma follows from \eqref{20may2220}, \eqref{lmtpsipos} and 
\eqref{eq:pwavelambda}
using \eqref{20may2102} with $a/t\le a/t_*=2\sqrt N$.
\end{proof}

\subsection*{The main corrector contribution}

We now estimate the contribution of $\qm(t,x)$ on the interval $[t_*,a]$.
\begin{lemma}
  \label{lem:qm-late-middle}
There exist $c>0$ and $C>0$ such that for all $t\in[t_*,a]$,
\begin{align}\label{20may2228}
\int \psi'(x)\qm(t,x) \d x 
&\le  {C} (t + 1)^{{3\sqrt{N}}/{(2\lambda_*)}}e^{-\sqrt{N}a}
\Big[(t-t_* + 1)^{-1/2}+ e^{-{c(t - t_*)^2}/{a}}\Big],
\\
\label{20may2232}
\int \abs{E(t,x)}\psi(x)\qm(t,x)\d x 
&\le \farc{C}{a} (t + 1)^{{3\sqrt{N}}/{(2\lambda_*)}}e^{-\sqrt{N}a}
\Big[(t-t_* + 1)^{-1/2}+ e^{-{c(t - t_*)^2}/{a}}\Big].
\end{align}
Moreover, for all $t\ge t_*$,
\begin{equation}
I(t) \geq \int_\R \psi(x)\qm(t, x) \d x
\ge ca^{{3\sqrt{N}}/{(2\lambda_*)}}e^{-\sqrt{N}a}.
\label{20may2244}
\end{equation}
\end{lemma}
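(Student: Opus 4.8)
\subsection*{Proof proposal for Lemma~\ref{lem:qm-late-middle}}

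The plan is to run the argument of Lemma~\ref{lem:qm-early-middle} on the interval $[t_*,a]$: we use the Duhamel formula $\qm(t,x)=\int_{\earlyt}^t q^s(t,x)\ds s$, insert the $q^s$-bounds from Lemma~\ref{lem:qs-integrals}, and integrate in $s$. For the upper bounds \eqref{20may2228} and \eqref{20may2232} I would first dispose of the prefactors. On the range $\earlyt\le s\le t\le a$ one has $s^2/a^2\le 1$ and $(s+1)^{3\sqrt N/(2\lambda_*)}\le (t+1)^{3\sqrt N/(2\lambda_*)}$, while \eqref{20may2108} gives $\Lambda(s;a)\le Ca^{-1/2}(s+1)^{3\sqrt N/(2\lambda_*)}e^{-\sqrt N a}e^{-c(t_*-s)^2/a}$; feeding these (and, for the $E$-line, the bound $(t+1)^{-1/2}\le Ca^{-1/2}$ valid since $t\ge t_*$) into the last two lines of \eqref{eq:q-s-bounds-middle} reduces both estimates to the two elementary bounds
\[
\int_{\earlyt}^t e^{-c(t_*-s)^2/a}(t-s+1)^{-\alpha}\ds s\le
\begin{cases}
C\sqrt a\,\big[(t-t_*+1)^{-1/2}+e^{-c(t-t_*)^2/a}\big], & \alpha=\tfrac12,\\[1mm]
C\,\big[(t-t_*+1)^{-1/2}+e^{-c(t-t_*)^2/a}\big], & \alpha=\tfrac32.
\end{cases}
\]
Once these hold, \eqref{20may2228} and \eqref{20may2232} are immediate, and combining with Lemmas~\ref{lem:p-late-middle} and~\ref{lem:early-corrector} (which show the $p$- and $\qe$-contributions to be lower order) yields the estimate \eqref{eq:ratio-late-middle} of Lemma~\ref{lem-may2202}.

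To prove the two elementary bounds I would substitute $v=t-s$, set $u_0=t-t_*$, and split $\int_0^{t-\earlyt}e^{-c(v-u_0)^2/a}(v+1)^{-\alpha}\ds v$ at $v=u_0/2$. On $v\le u_0/2$ the Gaussian is $\le e^{-cu_0^2/(4a)}$ and $\int_0^{u_0/2}(v+1)^{-\alpha}\ds v$ is $\le C(u_0+1)^{1-\alpha}$ for $\alpha<1$, which is $\le C\sqrt a$ since $u_0\le a$ (and is just $\le C$ when $\alpha>1$). On $v\ge u_0/2$ we have $(v+1)^{-\alpha}\le C(u_0+1)^{-\alpha}$: for $\alpha=\tfrac12$ the Gaussian then integrates to $C\sqrt a$, giving the term $C\sqrt a(u_0+1)^{-1/2}$; for $\alpha=\tfrac32$ we instead use the integrability of $(v+1)^{-3/2}$ directly, $\int_{u_0/2}^\infty(v+1)^{-3/2}\ds v\le C(u_0+1)^{-1/2}$, with no extra power of $a$. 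This is exactly the dichotomy in the displayed bounds.

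For the lower bound \eqref{20may2244}: since $p,\qe,\qm\ge 0$ and $\psi\ge 0$, $I(t)=\int_\R\psi r\,\dn x\ge\int_\R\psi\qm\,\dn x$. Multiplying \eqref{eq:scattering-main} by $\psi$, integrating in $x$, using \eqref{mar1906bis} to replace $\psi''$ by $\nonlin'(U_0)\psi=V(\infty,\anon)\psi$, and integrating the drift by parts (boundary terms vanish since $\psi$ grows at most linearly at $+\infty$ where $\qm$ decays like a Gaussian, and $\psi$ decays exponentially at $-\infty$), we get for $t\ge\earlyt$
\[
\der{}{t}\int_\R\psi\qm\,\dn x=\int_\R E(t,x)\psi\qm\,\dn x+\frac{3}{2\lambda_*(t+1)}\int_\R\psi'\qm\,\dn x+\int_\R\big(N-V(t,x)\big)\psi(x)p(t,x)\,\dn x,
\]
in which the middle term is $\ge 0$ as $\psi',\qm\ge 0$. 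Integrating from $\earlyt$ (where $\qm\equiv0$) to $t\ge t_*$ and discarding the middle term gives
\[
\int_\R\psi\qm(t,x)\,\dn x\ge\int_{t_*-\sqrt a}^{t_*}\!\!\int_\R\big(N-V(s,x)\big)\psi(x)p(s,x)\,\dn x\,\dn s-\int_{\earlyt}^\infty\!\!\int_\R|E(t',x)|\psi(x)\qm(t',x)\,\dn x\,\dn t'.
\]
The first integral I bound below by restricting to a fixed compact interval $x\in[-K,0]$, where $\psi$ is bounded below, $N-V(s,x)=[N-\nonlin'(U_0(x))]+E(s,x)$ is bounded below (because $N-\nonlin'(U_0)>0$ on $[-K,0]$ and $|E(s,\anon)|\le C(s+1)^{-1/2}\le Ca^{-1/2}$ for $s\ge t_*-\sqrt a$), and \eqref{20may1404} yields $p(s,x)\ge ca^{-1/2}(s+1)^{3\sqrt N/(2\lambda_*)}e^{-\sqrt N a}$ uniformly for $s\in[t_*-\sqrt a,t_*]$, $x\in[-K,0]$ — there $\nu(s)=O(\sqrt a)$ keeps the Gaussian factor bounded below. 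Since $(s+1)^{3\sqrt N/(2\lambda_*)}\ge ca^{3\sqrt N/(2\lambda_*)}$ on this window of length $\sqrt a$, the first integral is $\ge ca^{3\sqrt N/(2\lambda_*)}e^{-\sqrt N a}$. The error double integral I would handle by inserting the Duhamel formula, applying the third line of \eqref{eq:q-s-bounds-middle}, performing the $t'$-integral via $\int_s^\infty(t'+1)^{-1/2}(t'-s+1)^{-3/2}\ds t'\le 2(s+1)^{-1/2}$, and then the $s$-integral using \eqref{20may2108} for $s\le a$ and the crude bound $\Lambda(s;a)\le C(s+1)^{C}e^{-Ns}$ for $s>a$; this gives a bound of order $a^{-1/2}\cdot a^{3\sqrt N/(2\lambda_*)}e^{-\sqrt N a}$, negligible next to the main term. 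Hence $\int_\R\psi\qm(t,x)\,\dn x\ge ca^{3\sqrt N/(2\lambda_*)}e^{-\sqrt N a}$ for all $t\ge t_*$ once $a$ is large.

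The main obstacle is the lower bound: the upper bounds follow almost mechanically from Lemma~\ref{lem:qs-integrals}, but the lower bound must reproduce the exact power $a^{3\sqrt N/(2\lambda_*)}$, which is a genuine two-scale effect. A single slice of the forcing deposits only a $\psi$-weighted mass of order $a^{3\sqrt N/(2\lambda_*)-1/2}e^{-\sqrt N a}$ near the front, and the missing $\sqrt a$ is recovered only by accumulating these deposits over the $O(\sqrt a)$-wide window around $t_*$, all the while checking that neither the absorbing potential $V$ nor the perturbation $E$ erodes this mass on the remaining interval $[s,t]$. Keeping the compact-set reduction uniform over the whole window and controlling the accumulated $E$-error by a convergent iterated integral are the points requiring care; everything else is routine Gaussian bookkeeping.
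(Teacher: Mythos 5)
Your proof of the two upper bounds \eqref{20may2228}--\eqref{20may2232} is essentially the paper's argument: Duhamel, insert Lemma~\ref{lem:qs-integrals}, use \eqref{20may2108} for $\Lambda$, and control the resulting convolution-type integrals $\int (t-s+1)^{-\alpha}e^{-c(s-t_*)^2/a}\ds s$ by splitting at the midpoint, with the integrability of $(v+1)^{-3/2}$ saving the extra $\sqrt a$ when $\alpha=3/2$. The paper writes the split in the $s$-variable rather than $v=t-s$, but the bookkeeping is identical, and your bounds match the paper's $Z_{1/2}(t)\le C\sqrt a[\dots]$ and $Z_{3/2}(t)\le C[\dots]$.

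Your treatment of the lower bound \eqref{20may2244} is correct but takes a genuinely different route. The paper first lower-bounds $\qm$ by dropping everything but $\int_{\earlyt}^{t_*}\int_{\R_+}q^s\psi$, then replaces $q^s$ by an explicit subsolution $\ubar v^s$ to a Dirichlet-type problem with exponentially decaying potential, and uses Lemma~\ref{lem:Dirichlet-lower} (the $[HNRR]$ self-similar decomposition) to show $\int_{\R_+}\ubar v^s\psi\ge c$ uniformly for $t\ge s\ge\earlyt$; this converts the problem into a lower bound on $\int_{\earlyt}^{t_*}\Lambda(s;a)\ds s$, which is the same Gaussian-around-$t_*$ computation you do. You instead derive the exact ODE $\frac{\dn}{\dn t}\int\psi\qm = \int E\psi\qm + \frac{3}{2\lambda_*(t+1)}\int\psi'\qm + \int(N-V)\psi p$, drop the nonnegative drift term, integrate in time, lower-bound the forcing on the compact window $[-K,0]\times[t_*-\sqrt a,t_*]$ using \eqref{20may1404} (where the Gaussian factor is bounded below because $|\nu(s)|=O(\sqrt a)$ and $s\asymp a$), and absorb the accumulated $E$-error via the iterated Duhamel bound. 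This avoids constructing a subsolution and invoking Lemma~\ref{lem:Dirichlet-lower} entirely, at the cost of needing the full-time error control $\int_{\earlyt}^\infty\int|E|\psi\qm \lesssim a^{-1/2}a^{3\sqrt N/(2\lambda_*)}e^{-\sqrt N a}$, which again rests on Lemma~\ref{lem:qs-integrals} (and hence on the supersolution machinery of Lemma~\ref{lem:super}). Both approaches therefore lean on the same heavy lemmas; what you buy is a shorter, more ODE-flavored derivation of the lower bound that sidesteps the Dirichlet subsolution construction, while the paper's subsolution argument is somewhat more self-contained for the lower bound itself and makes the ``mass that escapes to $\R_+$ and stays'' picture explicit. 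One should flag, as you do, that the integration by parts in your moment identity requires the boundary terms to vanish, but the Gaussian decay of $\qm$ beats the polynomial growth of $\psi$ at $+\infty$ and the exponential growth of $\psi'$ as $x\to-\infty$ is actually decay, so this is fine.
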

We emphasize that the last bound holds for all $t \geq t_*$, and thus extends to the late times after $a$ as well.

\begin{proof}
  We again represent $\qm(t,x)$ via the Duhamel formula (\ref{20may2216}) as an integral of $q^s$ satisfying \eqref{eq:q-s}.
  By Lemma~\ref{lem:qs-integrals},
\begin{equation}
  \label{20may2218}
  \begin{aligned}
    &\int_\R \psi'(x)\qm(t,x) \d x \leq C \int_{\earlyt}^t \Lambda(s;a) (t - s + 1)^{-1/ 2} \d s,\\
    &\int_\R \abs{E(t,x)}\psi(x)\qm(t,x) \d x \leq C \int_{\earlyt}^t 
    \Lambda(s;a) a^{- 1 /2} (t - s + 1)^{-3/ 2} \d s.
  \end{aligned}
\end{equation}
By (\ref{20may2108}),
\begin{equation}
  \label{20may2222}
  \bal
  \Lambda(s;a) \le\frac{C}{\sqrt{a}} (t + 1)^{{3\sqrt{N}}/{(2\lambda_*)}}
  e^{-\sqrt{N}a}e^{-c(s-t_*)^2/a}
  \enbal
\end{equation}
for $\earlyt \le s\le t\le a$.
In light of \eqref{20may2218}, we must bound integrals of the form
\begin{equation}\label{eq:conv}
Z_\alpha(t):=\int_{\earlyt}^t (t - s + 1)^{-\alpha} e^{-{c(s - t_*)^2}/{a}} \d s
\le
\int_{0}^\infty (s + 1)^{-\alpha} e^{-{c(t - t_*-s)^2}/  {a}} \d s
\end{equation}
for $\alpha \in \{1/2, 3/2\}$.

We cut the integral in the right side of \eqref{eq:conv} at
$s=(t-t_*)/2$:
\begin{equation*}
\int_{\frac{t-t_*}2}^\infty  (s + 1)^{-\alpha} e^{-{c(t - t_*-s)^2}/  {a}}
\d s
\le
\Big(\frac{t-t_*}2+1\Big)^{-\alpha}\int_{\frac{t-t_*}2}^\infty e^{-{c(t - t_*-s)^2}/  {a}} 
\d s
\le C\sqrt a (t-t_*+1)^{-\alpha}
\end{equation*}
and
\begin{equation*}
\begin{aligned}
  \int_0^{\frac{t-t_*}2} (s + 1)^{-\alpha} e^{-{c(t - t_*-s)^2}/  {a}}
  \d s
  \le
  \int_0^{\frac{t-t_*}2} e^{-{c(t - t_*-s)^2}/  {a}} \d s
  &\le
  \int_{\frac{t - t_*}2}^{\infty} e^{-{cr^2}/  {a}} \d r\\
  &\le C\sqrt a e^{-c(t-t_*)^2/(4a)}.
\end{aligned}
\end{equation*}
we conclude that
\be\label{20may2226}
Z_\alpha(t)\le C\sqrt a\Big[(t-t_* + 1)^{-\alpha}+ e^{-{c(t - t_*)^2}/ {a}}\Big].
\ee
When $\alpha>1$, the integrability of $(s+1)^{-\alpha}$ gives an
alternative bound. Still cutting  at $s=(t-t_*)/2$, we now write
\begin{equation*}
\int_{\frac{t-t_*}2}^\infty  (s + 1)^{-\alpha} e^{-{c(t - t_*-s)^2}/  {a}}
\d s
\le
\int_{\frac{t-t_*}2}^\infty (s + 1)^{-\alpha} 
\d s
\le C (t-t_*+1)^{1-\alpha}
\end{equation*}
and
\begin{equation*}
\int_0^{\frac{t-t_*}2} (s + 1)^{-\alpha} e^{-{c(t - t_*-s)^2}/  {a}}
\d s
\le e^{-c(t-t_*)^2/(4a)}
\int_0^{\infty} (s + 1)^{-\alpha}  \d s
\le C e^{-c(t-t_*)^2/(4a)}.
\end{equation*}
We conclude that
\be Z_\alpha(t)\le C\Big[(t-t_* + 1)^{1-\alpha}+ e^{-{c(t - t_*)^2}/
{a}}\Big]\qquad\text{if $\alpha>1$}.
\label{20may2230}
\ee

Combining (\ref{20may2218}), (\ref{20may2222}), and (\ref{20may2226}) with
$\alpha=1/2$ gives
\begin{align*}
\begin{split}
\int_\R \psi'(x)\qm(t,x) \d x &\le \frac{C}{\sqrt{a}} (t + 1)^{{3\sqrt{N}}/{(2\lambda_*)}} e^{-\sqrt{N}a}Z_{1/2}(t) \\
&\le  {C} (t + 1)^{{3\sqrt{N}}/{(2\lambda_*)}}e^{-\sqrt{N}a} \Big[(t-t_* + 1)^{-1/2}+ e^{-{c(t - t_*)^2}/{a}}\Big],
 \end{split}
\end{align*}
while (\ref{20may2218}), (\ref{20may2222}), and (\ref{20may2230}) with
$\alpha=3/2$ imply
\begin{align*}
\begin{split}
\int_\R \abs{E(t,x)}\psi(x)\qm(t,x)\d x &\le \frac{C}{a} (t + 1)^{{3\sqrt{N}}/{(2\lambda_*)}} e^{-\sqrt{N}a}Z_{3/2}(t)\\
&\le \farc{C}{a} (t + 1)^{{3\sqrt{N}}/{(2\lambda_*)}}e^{-\sqrt{N}a} \Big[(t-t_* + 1)^{-1/2}+ e^{-{c(t - t_*)^2}/{a}}\Big].
\end{split}
\end{align*}
We have thus confirmed \eqref{20may2228} and \eqref{20may2232}.

Finally, we need a lower bound on $I(t)$.
With $p\ge0$ and $\qe\ge0$, we can write for $t\ge t_*$
\be\label{1stboundI}
I(t)\ge \int_\R \qm(t,x)\psi(x)\d x
=
\int_{\earlyt}^{t} \int_\R q^s(t,x) \psi(x)\d x\d s
\ge
\int_{\earlyt}^{t_*} \int_{\R_+} q^s(t,x) \psi(x)\d x\d s,
\ee
where we used again the Duhamel formula (\ref{20may2216}) and $q^s\ge0$.
We recall that $q^s$ satisfies \eqref{eq:q-s}.
We need a lower bound on $q^s$, so we look for a subsolution to
\eqref{eq:q-s} on $\R_+$.

We first focus on the initial condition $q^s(s,x)=[N-V(s,x)]p(s,x)$ for $x\in\R_+$.
Recall the definition \eqref{20may1214} of $V$.
The comparison principle implies that $H$ is decreasing in $x$, so
\begin{equation}
  \label{eq:H-monotone}
  H(s, x + m(s)) \leq H(s, m(s)) \quad \textrm{for all } x \geq 0.
\end{equation}
By \eqref{20mar1908} and $s \geq \earlyt = \earlyxi a$, $H(s, x + m(s))$ is very close to $U_0(0) < 1$ provided $a$ is sufficiently large.
We can thus assume that $H(s, m(s)) \leq 1 - c$ for all $s \geq \earlyt$.
Using \eqref{eq:H-monotone} and \eqref{eq:nonlin-deriv}, it follows that $N - V(s, x) \geq c$ in this region.
That is,
\begin{equation}
  \label{eq:q-s-lower}
  q^s(s,x) \geq c p(s,x) \quad \textrm{for all } s \geq \earlyt, \, x \geq 0.
\end{equation}
Going back to \eqref{eq:p-zero}, we see that for all $x\ge 0$ and $s\ge
\earlyt$, we have
\begin{equation}
  \label{eq:p-lower-prelim}
  p(s, x) \geq \Lambda(s;a) e^{-Cx} \exp\Big\{-\frac{1}{4s}\Big[x - \frac{3}{2\lambda_*}\log(s+1)\Big]^2\Big\}.
\end{equation}
We are free to assume $s \geq \earlyt \geq 2$, so a variation on \eqref{eq:gaussian mean bound} yields
\begin{equation*}
  \frac{1}{4s}\Big[x - \frac{3}{2\lambda_*}\log(s+1)\Big]^2 \leq \frac{x^2}{8 (1 - \eps)} + C(\eps) \quad \textrm{for all } s \geq \earlyt, \, x \in \R
\end{equation*}
and $\eps \in (0, 1)$.
Hence \eqref{eq:p-lower-prelim} and $e^{-Cx} \geq C(\eps) e^{-\eps x^2}$ implies
\begin{equation}
  \label{eq:p-lower}
  p(s, x) \geq c \Lambda(s;a) e^{-Cx} e^{-x^2/7} \geq c \Lambda(s;a) e^{-x^2/6} \geq c \Lambda(s;a) x e^{-x^2/4},
\end{equation}
where we have allowed $c > 0$ to change from expression to expression.
We now define
\begin{equation*}
  \varphi(\lambda, x) = \frac{x}{\lambda^{3/2}} \exp\left(-\frac{x^2}{4\lambda}\right)
\end{equation*}
and note for later reference that
\begin{equation}
  \label{eq:normalization}
  \int_{\R_+} x \varphi(\lambda, x) \d x = 2\sqrt{\pi}~~\hbox{ for all $\lambda > 0$.}
\end{equation}
Combining \eqref{eq:q-s-lower} and \eqref{eq:p-lower}, we can write
\begin{equation}
  \label{eq:q-s-lower-varphi}
  q^s(s,x) \geq c \Lambda(s;a) \varphi(1, x) \quad \textrm{for all } s \geq \earlyt, \, x \geq 0.
\end{equation}

Now, we consider the PDE in \eqref{eq:q-s}.
Since we are looking for a lower bound on $q^s(t,x)$, 
we cannot neglect the negative term~$-V(t,x)q^s$ in 
the right side of (\ref{eq:q-s}).
By Lemma~\ref{lem:front-decay},
\be
\label{20may2234}
V(t,x)\le Be^{-cx}
\ee
for all $t>0$ and $x>0$.
To obtain a subsolution,
we are free to impose a Dirichlet condition at~$x = 0$.
We let $\ubar v^s(t,x)$ solve
\begin{equation*}
  \bal
  &   \pdr{\ubar v^s }{t}= \pdrr{\ubar v^s}{x} - \frac{3}{2\lambda_*(t + 1)} \pdr{\ubar v^s}{x} - 
  B e^{-cx} \ubar v^s ~~~\textrm{ for $t>s$ and $x>0$,} \\
  &    \ubar v^s(t, 0) = 0~~\hbox{ for $t>s$},\\
  &   \ubar v^s(s, x) = \varphi(1,x)~~\hbox{ for $x>0$}.
  \enbal
\end{equation*}
By \eqref{eq:q-s-lower-varphi} and \eqref{20may2234}, we have
\begin{equation*}
q^s(t,x)\ge c\Lambda(s;a)\ubar v^s(t,x) \quad \textrm{for all } t \geq s \geq \earlyt, \, x \geq 0.
\end{equation*}
Then, with \eqref{1stboundI},
\be\label{eq:q-lower}
I(t)\ge c \int_{\earlyt}^{t_*}\Lambda(s;a)\int_{\R_+}\ubar v^s(t,x)\psi(x)
\d x\d s.
\ee
The following lemma gives a lower bound on $\ubar v^s(t,x)$. 
\begin{lemma}\label{lem:Dirichlet-lower}
There exists $c > 0$ such that for all $t\ge s$ and $x>0$ we have
\begin{equation}
  \label{20may2242}
\ubar v^s(t, x) \geq c \, \varphi(t - s + 1,x) + 
 \ubar{R}(t, s, x)
\end{equation}
with
\begin{equation}
  \label{eq:R-lower}
  \abs{\ubar{R}(t, s, x)} \leq C (t - s + 1)^{-2} x e^{-x^2/[8(t - s + 1)]}
	=C (t-s+1)^{-1/2}\varphi\big(2(t - s + 1), x\big).
\end{equation}
\end{lemma}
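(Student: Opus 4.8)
The plan is to identify $\ubar v^s(t,\cdot)$, for $\sigma:=t-s$ large, with a definite multiple of the self-similar profile $\varphi(\sigma+1,\cdot)$, collecting the drift together with the residual deviation from that profile in the lower-order term $\ubar R$. The starting observation is that $\varphi(\lambda,x)$ is a constant multiple of $-\partial_x$ of the Gaussian heat kernel, hence solves the heat equation in the variables $(\lambda,x)$ and vanishes at $x=0$; therefore $w_0(t,x):=\varphi(\sigma+1,x)$ solves the \emph{free} Dirichlet heat equation on $\R_+$ with $w_0(s,\cdot)=\varphi(1,\cdot)$, that is, $w_0(t,\cdot)=\int_0^\infty p_D(\sigma;\cdot,y)\,\varphi(1,y)\,\d y$ with $p_D(\tau;x,y):=(4\pi\tau)^{-1/2}\bigl(e^{-(x-y)^2/4\tau}-e^{-(x+y)^2/4\tau}\bigr)$. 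We shall use the elementary bound $p_D(\tau;x,y)\le\frac{xy}{\sqrt{4\pi}\,\tau^{3/2}}\,e^{-(x-y)^2/4\tau}$, which follows from $1-e^{-z}\le z$. For $\sigma$ below a threshold depending only on $C/c$ there is nothing to prove, since $\ubar v^s\ge0$ by the comparison principle while $c\,\varphi(\sigma+1,x)\le C(\sigma+1)^{-1/2}\varphi(2(\sigma+1),x)$ holds for all such $\sigma$ and all $x>0$. We therefore assume $\sigma$ large.

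The heart of the proof is to produce, \emph{uniformly in $s$}, a constant $c>0$ with $\ubar v^s(t,x)\ge c\,\varphi(\sigma+1,x)$; this fixes the $c$ in \eqref{20may2242}. The mechanism is that the potential $-Be^{-cx}$, though it continually removes mass, is concentrated near $x=0$, where $\varphi(\lambda,\cdot)$ — being $\propto x$ — is small, so only a bounded fraction of the mass is lost. We make this quantitative with a bootstrap on the first moment $\f M(t):=\int_{\R_+}x\,\ubar v^s(t,x)\,\d x$, which equals $2\sqrt\pi$ at $t=s$ by \eqref{eq:normalization}: integrating the equation for $\ubar v^s$ against $x$ and using the Dirichlet condition gives $\dot{\f M}(t)=-B\int_{\R_+}xe^{-cx}\ubar v^s\,\d x$ plus a nonnegative drift term, while standard parabolic estimates yield the \emph{self-consistent} bound $\int_{\R_+}xe^{-cx}\ubar v^s(t,x)\,\d x\le C\,\f M(t)\,(\sigma+1)^{-3/2}$. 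Combining these, $\frac{\d}{\d t}\log\f M(t)\ge-CB(\sigma+1)^{-3/2}$, and since $\int_1^\infty(\sigma+1)^{-3/2}\,\d\sigma<\infty$ we get $\f M(t)\ge2\sqrt\pi\,e^{-C'B}>0$ for all $t\ge s$, closing the bootstrap; parabolic regularity then turns this lower bound on the mass into the pointwise bound $\ubar v^s(t,x)\ge c\,\varphi(\sigma+1,x)$. This is the main obstacle of the proof: one cannot shortcut it by dominating $Be^{-cx}$ by its supremum $B$ (that would force a spurious $e^{-B\sigma}$ decay) nor by a single Duhamel iteration (too lossy near $x=0$); the gain must come from the time-integrability of $(\sigma+1)^{-3/2}$.

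It remains to reinstate the drift and control $\ubar R:=\ubar v^s-c\,\varphi(\sigma+1,\cdot)$. This $\ubar R$ solves the same linear equation as $\ubar v^s$ — with the drift and the absorption $-Be^{-cx}$ retained, each of which only assists the comparison below — with $\ubar R(t,0)=0$, a small nonnegative datum at $t=s$, and forcing $-\frac{3}{2\lambda_*(\tau+1)}\,c\,\partial_x\varphi(\tau-s+1,x)$ together with the already-controlled profile deviation from the previous paragraph. Using $|\partial_x\varphi(\mu,x)|\le C\mu^{-3/2}e^{-x^2/8\mu}$ and $\frac{3}{2\lambda_*(\tau+1)}\le\frac{3}{2\lambda_*(\tau-s+1)}$, this forcing is bounded by $C(\tau-s+1)^{-5/2}e^{-x^2/[8(\tau-s+1)]}$. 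Representing $\ubar R$ by Duhamel against $p_D$ (the genuine absorption again only helping), completing the square in the $y$-integral, and splitting the $\tau$-integral at $\tfrac{s+t}{2}$ (using $\int_s^\infty(\tau-s+1)^{-5/2}\,\d\tau<\infty$), one arrives at
\begin{equation*}
  |\ubar R(t,s,x)|\le C(\sigma+1)^{-2}\,x\,e^{-x^2/[8(\sigma+1)]}=C(\sigma+1)^{-1/2}\,\varphi\bigl(2(\sigma+1),x\bigr),
\end{equation*}
which is \eqref{eq:R-lower}. Together with the lower bound of the previous paragraph, this proves \eqref{20may2242}.
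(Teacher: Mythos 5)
The strategy is sound in outline and correctly identifies the key obstacle (the naive bound $\int_{\R_+}xe^{-cx}\ubar v^s\,\d x\le C(\sigma+1)^{-3/2}$, with $\sigma=t-s$, plugged into the moment ODE only gives $\f M(t)\ge 2\sqrt\pi-C'B$, which may be negative), but the step that is supposed to get around it is circular.  The ``self-consistent'' bound
\[
\int_{\R_+}xe^{-cx}\,\ubar v^s(t,x)\,\d x\le C\,\f M(t)\,(\sigma+1)^{-3/2}
\]
is not a standard parabolic estimate.  Evolving the fixed datum $\varphi(1,\cdot)$ by the Dirichlet heat kernel, or Duhamelling from an intermediate time $t-\sigma/2$, gives a bound with $\f M(s)$ or $\f M(t-\sigma/2)$ on the right, not $\f M(t)$ --- and those versions do \emph{not} close the bootstrap when $B$ is large.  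Getting the factor $\f M(t)$ at the current time is equivalent (via a Harnack/boundary-Harnack chain) to knowing that $\f M$ does not collapse over a time of order $\sigma$, i.e.\ precisely what you are trying to prove.  You flag this as the main obstacle and then assert it away.  Similarly, the sentence ``parabolic regularity then turns this lower bound on the mass into the pointwise bound $\ubar v^s\ge c\,\varphi(\sigma+1,\cdot)$'' is another Harnack-chain/boundary-Harnack argument that needs to be spelled out, not cited as generic regularity.

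There is a clean, non-circular fix, and it is the device the paper already uses throughout (the function $\psi$ of (\ref{mar1906bis})): replace the first moment by the Lyapunov functional $\f L(t):=\int_{\R_+}\psi_B(x)\,\ubar v^s(t,x)\,\d x$, where $\psi_B$ solves $\psi_B''=Be^{-cx}\psi_B$ on $\R_+$ with $\psi_B(0)=0$, $\psi_B'(0)=1$.  Integrating the equation for $\ubar v^s$ against $\psi_B$ and integrating by parts (the boundary terms vanish because $\psi_B(0)=\ubar v^s(t,0)=0$), the potential term cancels exactly with $\int\psi_B''\ubar v^s$, and the drift contributes $\frac{3}{2\lambda_*(t+1)}\int\psi_B'\ubar v^s\ge 0$.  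Hence $\f L$ is nondecreasing, so $\f L(t)\ge\int\psi_B\,\varphi(1,\cdot)>0$ for all $t\ge s$.  Since $\psi_B$ is convex with $\psi_B(0)=0$ and $\psi_B'$ increasing to a finite limit $A$, we have $x\le\psi_B(x)\le Ax$, and $\f M(t)\ge A^{-1}\f L(t)\ge c>0$ follows immediately, without any bootstrap or any self-consistent closure.  This is also in the spirit of the paper's one-line proof, which cites Lemma~2.2 of \cite{HNRR} and handles the potential by observing it is negligible at the diffusive scale --- made precise by the hybrid sub-/super-solution gluing of Appendix~\ref{sec:appendix}.

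Two smaller points.  First, when you control $\ubar R=\ubar v^s-c\varphi(\sigma+1,\cdot)$ by Duhamel you list only the drift forcing $-\frac{3c}{2\lambda_*(\tau+1)}\partial_x\varphi$, but differentiating the ansatz also produces the potential forcing $-Be^{-cx}c\varphi(\tau-s+1,\cdot)$, which has amplitude $(\tau-s+1)^{-3/2}$ (not $^{-5/2}$); a straightforward Duhamel bound on it contributes $O\bigl(\sigma^{-3/2}xe^{-x^2/8\sigma}\bigr)$ to $|\ubar R|$, larger than the claimed $\sigma^{-2}$.  Second, as stated the lemma is only a \emph{lower} bound ($\ubar R$ may be taken to be $\min\{0,\ubar v^s-c\varphi\}$), so once $\ubar v^s\ge c\varphi(\sigma+1,\cdot)$ is established for $\sigma$ above a threshold (and the trivial reduction handles $\sigma$ below it), no two-sided control on $\ubar v^s-c\varphi$ is actually needed; the last paragraph both contains the gap above and is, in its present form, superfluous.
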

\begin{proof}
This is nearly Lemma~2.2 in \cite{HNRR}.
The only difference is the exponentially decaying potential, which is 
negligible on the scale~${x \sim \sqrt{t}}$, where the analysis really happens.
\end{proof}

Using (\ref{eq:adjoint-boundary}), we have $\psi(x)\ge c x$ for $x\ge0$.
Then (\ref{20may2242}), \eqref{eq:R-lower}, and \eqref{eq:normalization} imply
\begin{equation*}
  \begin{aligned}
    \int_{\R_+}\ubar v^s(t,x)\psi(x) \d x &\ge c \int_{\R_+} x \varphi(t-s+1,x) \d x - C (t - s + 1)^{-1/2} \int_\R x \varphi\big(2(t - s + 1), x\big) \d x\\
    &\ge c - C(t - s + 1)^{-1/2},
  \end{aligned}
\end{equation*}
where we have allowed $c$ to change from expression to expression.
So $\int_{\R_+}\ubar v^s \psi$ is uniformly positive once $t - s \geq C'$ for $C'$ large.
On the other hand, $\ubar{v}^s$ is positive, so the integral is positive on the time interval $[s, s + C']$.
By compactness, it follows that
\begin{equation*}
  \int_{\R_+}\ubar v^s(t,x)\psi(x) \d x \geq c \quad \textrm{for all } t \geq s \geq \earlyt.
\end{equation*}
Now \eqref{eq:q-lower} yields
\begin{equation}
  \label{eq:I-lower-integral}
  I(t) \geq c \int_{\earlyt}^{t_*} \Lambda(s;a) \d s
\end{equation}
for all $t\ge t_*$.
For $s \in [\earlyt, t_*]$, \eqref{eq:Lambda} implies
\begin{equation}
  \label{eq:Lambda-lower-prelim}
  \Lambda(s; a) \geq \frac{c}{\sqrt{a}} a^{3\sqrt{N}/(2\lambda_*)} e^{-a\theta(s/a)}
\end{equation}
for $\theta(\xi) = N\xi + 1/(4\xi)$.
We recall that $\theta$ is strictly convex and attains its minimum of $\sqrt{N}$ at
{$\xi_* = 1/(2\sqrt{N}) = t_*/a$}.
Also, $\earlyt = \earlyxi a.$
Since $\theta$ is smooth on the interval $[\earlyxi, \xi_*]$, there exists $C > 0$ such that
\begin{equation*}
  \theta(\xi) \leq \sqrt{N} + C(\xi - \xi_*)^2 \quad \textrm{for all } \xi \in [\earlyxi, \xi_*].
\end{equation*}
Then \eqref{eq:Lambda-lower-prelim} yields
\begin{equation*}
  \Lambda(s; a) \geq \frac{c}{\sqrt{a}} a^{3\sqrt{N}/(2\lambda_*)} e^{-\sqrt{N} a} e^{-C(t_* - s)^2/a}
\end{equation*}
and \eqref{eq:I-lower-integral} implies
\begin{equation*}
  I(t) \geq c a^{3\sqrt{N}/(2\lambda_*)} e^{-\sqrt{N} a} \int_{\earlyt}^{t_*} e^{-C(t_* - s)^2/a} \frac{\dn s}{\sqrt{a}} \geq c a^{3\sqrt{N}/(2\lambda_*)} e^{-\sqrt{N} a}
\end{equation*}
for all $t \geq t_*$.
This completes the proof of Lemma~\ref{lem:qm-late-middle}.
\end{proof}
 
\subsection*{The proof of Lemma~\ref{lem-may2202}}
 
Gathering together Lemmas~\ref{lem:early-corrector}, \ref{lem:p-late-middle}, and \ref{lem:qm-late-middle}, we obtain
\begin{equation*}
\bal
|\dot{I}(t)| &\leq \frac{3}{2(t + 1)}\int_\R\psi'(x)r(t,x) \d x + \int_\R \abs{E(t,x)}\psi(x)r(t,x) \d x\\
&\leq \farc{C}{a} (t + 1)^{{3\sqrt{N}}/{(2\lambda_*)}}e^{-\sqrt{N}a}
\Big[(t-t_* + 1)^{-1/2}+ e^{-{c(t - t_*)^2}/{a}}\Big].
\enbal
\end{equation*}
Taking into account \eqref{20may2244}, we see that (\ref{eq:ratio-late-middle}) follows.\hfill\qed

\section{The late times}
\label{sec:postmodern}

We finish with the times $t \geq a$.
In this regime, $p$ and $\qe$ should be exponentially negligible.
However, since this time period is unbounded, we must take care to ensure that $ {\dot{I}(t)}/{I(t)}$ is integrable in time, and, in fact, small.
We will prove the following lemma.
\begin{lemma}\label{lem-may2402}
There exists $C>0$ such that for all $t\ge a$ we have
\be\label{20may2420}
\farc{|\dot I(t)|}{I(t)}\le \farc{C}{t^{3/2}}.
\ee
\end{lemma}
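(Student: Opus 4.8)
The plan is to bound the two terms on the right side of \eqref{mar1912} and then divide by the lower bound \eqref{20may2244}, which is valid for all $t \ge t_*$ and hence for $t \ge a$:
\[
  I(t) \ge c\, a^{3\sqrt{N}/(2\lambda_*)} e^{-\sqrt{N} a}.
\]
It therefore suffices to show that, for all $t \ge a$,
\[
  \int_\R \psi'(x)\, r(t,x) \d x \le \frac{C}{\sqrt{t}}\, a^{3\sqrt{N}/(2\lambda_*)} e^{-\sqrt{N} a}
  \And
  \int_\R \abs{E(t,x)}\, \psi(x)\, r(t,x) \d x \le \frac{C}{t^{3/2}}\, a^{3\sqrt{N}/(2\lambda_*)} e^{-\sqrt{N} a};
\]
dividing the first estimate by $t+1$ then produces the power $t^{-3/2}$ in \eqref{20may2420}.

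As in the previous sections I would decompose $r = p + \qe + \qm$ and handle the three pieces separately. The free term $p$ carries total mass $\int_\R p(t,x)\d x = e^{-Nt}$, while $\psi'$ is bounded and $\abs{E(t,x)}\psi(x) \le C(t+1)^{-1/2}$ by \eqref{20may1932}; hence $\int_\R \psi'(x)p(t,x)\d x \le C e^{-Nt}$ and $\int_\R \abs{E(t,x)}\psi(x)p(t,x)\d x \le C(t+1)^{-1/2}e^{-Nt}$. Since $a \ge 1$, the map $t \mapsto t^{1/2}e^{-Nt}$ is decreasing on $[a,\infty)$, and because $N > \sqrt{N}$ exponential decay in $a$ beats every power of $a$; so for $t \ge a$ we have $t^{1/2}e^{-Nt} \le a^{1/2}e^{-Na} \le C a^{3\sqrt{N}/(2\lambda_*)}e^{-\sqrt{N}a}$, and the $p$-contributions obey the two target bounds. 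For $\qe$, Lemma~\ref{lem:early-corrector} already gives $\int_\R \psi'(x)\qe(t,x)\d x \le C(t+1)^{-1/2}e^{-(\sqrt{N}+c)a}$ and $\int_\R \abs{E(t,x)}\psi(x)\qe(t,x)\d x \le C(t+1)^{-2}e^{-(\sqrt{N}+c)a}$, and the surplus factor $e^{-ca}$ absorbs $a^{3\sqrt{N}/(2\lambda_*)}$.

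The substantive term is $\qm$. Using the Duhamel formula \eqref{20may2216} and Lemma~\ref{lem:qs-integrals},
\[
  \int_\R \psi'(x)\qm(t,x)\d x \le C\int_{\earlyt}^t \frac{s^2}{a^2}\Lambda(s;a)(t-s+1)^{-1/2}\d s,
  \qquad
  \int_\R \abs{E(t,x)}\psi(x)\qm(t,x)\d x \le \frac{C}{\sqrt{t+1}}\int_{\earlyt}^t \frac{s^2}{a^2}\Lambda(s;a)(t-s+1)^{-3/2}\d s.
\]
The key is that $\Lambda(s;a)$ concentrates near the escape time $t_* = a/(2\sqrt{N})$. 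The rate function $\theta(\xi) = N\xi + 1/(4\xi)$ of \eqref{eq:rate-function} has strict minimum $\sqrt{N}$ at $\xi_* = 1/(2\sqrt{N})$, grows quadratically near $\xi_*$, and grows at least linearly for large $\xi$; this yields $\Lambda(s;a) \le C a^{3\sqrt{N}/(2\lambda_*)} e^{-\sqrt{N} a} e^{-c(s-t_*)^2/a}$ for $\earlyt \le s \le a$ (which is \eqref{20may2108}) and $\Lambda(s;a) \le C(s+1)^C e^{-\sqrt{N} a} e^{-(N-\sqrt{N})s}$ for $s \ge a$, whence $\int_{\earlyt}^\infty \frac{s^2}{a^2}\Lambda(s;a)\d s \le C a^{3\sqrt{N}/(2\lambda_*)} e^{-\sqrt{N} a}$. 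I would then split each Duhamel integral at $s = (t+t_*)/2$. On $[\earlyt,(t+t_*)/2]$ one has $t-s \ge (t-t_*)/2 \ge t/4$ (since $t_* < a \le t$), so $(t-s+1)^{-\alpha} \le C t^{-\alpha}$ comes out of the integral and the rest is controlled by the bound just displayed on $\int_{\earlyt}^\infty \frac{s^2}{a^2}\Lambda$. On $[(t+t_*)/2,t]$ one has $s \ge t/2 \ge a/2$, so $s/a$ is bounded away from $\xi_*$ and the rate-function estimates give $\Lambda(s;a) \le C e^{-\sqrt{N} a} e^{-ct}$; this exponential in $t$ overwhelms the crude factors $t^2/a^2$, the interval length, and $(t-s+1)^{-\alpha}\le 1$, leaving far less than $t^{-3/2}a^{3\sqrt{N}/(2\lambda_*)}e^{-\sqrt{N}a}$. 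Summing the three contributions and dividing by \eqref{20may2244} gives \eqref{20may2420}.

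The step I expect to be most delicate is the tail $s \approx t$ of the Duhamel integral: there the forcing $(N-V)p$ acts at a late time $s \gg t_*$, and one must rule out any accumulation in $\dot{I}$ from mass that the free evolution keeps injecting into $\R_+$ long after the escape window. The crude bound $\Lambda(s;a) \le C e^{-Ns}$ is not sharp enough here --- when $t$ is comparable to $a$ and $N \in \{2,3\}$, $e^{-Na/2}$ can exceed $e^{-\sqrt{N}a}$ --- so one really needs the full rate function $\theta$, which shows that once $s/a$ leaves $\xi_*$, $\Lambda(s;a)$ keeps the matching factor $e^{-\sqrt{N}a}$ together with a genuine exponential in $\max\{a,s\}$; hence the late forcing contributes only an amount exponentially small in both $t$ and $a$.
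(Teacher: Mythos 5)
Your proposal is correct and follows essentially the same skeleton as the paper's proof: reduce to bounding the two integrals in \eqref{mar1912}, divide by the lower bound \eqref{20may2244}, decompose $r = p + \qe + \qm$, quote Lemma~\ref{lem:early-corrector} for $\qe$, and control $\qm$ via the Duhamel formula \eqref{20may2216} together with Lemma~\ref{lem:qs-integrals} and the rate-function estimates on $\Lambda$.

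Two points differ in implementation but not in substance. For the $p$-contribution you use the cruder bound $\int_\R p = e^{-Nt}$ combined with $\|\psi'\|_\infty < \infty$ and $\|E\psi\|_\infty \le C(t+1)^{-1/2}$, rather than the paper's Lemma~\ref{lem:p-late}, which integrates the Gaussian $g$ against $\psi$ separately on $\R_\pm$ and invokes the rate-function bound \eqref{eq:theta-late}; both yield a factor exponentially small in $a$ and $t$, since $N > \sqrt{N}$. For $\qm$, you split the Duhamel integral at $s = (t+t_*)/2$, whereas the paper splits at $s = a$ (using \eqref{20may2108}, the $Z_\alpha$ bound \eqref{20may2226} with $t \ge a$, and then \eqref{eq:Lambda-s-late} with \eqref{eq:poly-exp-integral} for $s \ge a$). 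Your split works because $t_* \le t/2$ gives $t - s \ge t/4$ on the first piece, while on the tail piece $s \ge t/2$ and the exact identity $\theta(\xi) - \sqrt{N} = (2\sqrt{N}\xi - 1)^2/(4\xi)$ gives $a\theta(s/a) \ge \sqrt{N}a + ct$ with $c = (\sqrt{N}-1/2)^2/4$, which absorbs every polynomial factor. You correctly flag this last step as the delicate one: the crude bound $\theta(\xi) \ge N\xi$ is genuinely insufficient when $N \in \{2,3\}$ and $t$ is comparable to $a$, and the quadratic expansion of $\theta$ around $\xi_*$ is needed — this is the same mechanism the paper packages into \eqref{20may2108} and \eqref{eq:theta-late}.
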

After integration, Lemma~\ref{lem-may2402} implies that
\begin{equation}\label{eq-late-int}
\log\frac{I(\infty)}{I(a)}= O(a^{-1/2}).
\end{equation}
Note that the lower bound (\ref{20may2244}) still holds, so it suffices to show that
\begin{equation*}
|\dot I(t)| \leq \farc{C}{t^{3/2}} a^{3\sqrt{N}/(2\lambda_*)} e^{-\sqrt{N} a} \quad \textrm{for }t \geq  a.
\end{equation*}

\subsection*{The free contribution}

It is simple to control the free contribution at late times, since it has decayed into irrelevance.
\begin{lemma}
  \label{lem:p-late}
  There exist $c > 0$ and $C > 0$ such that for all $t \geq a$,
  \begin{equation}
    \label{eq:p-late}
    \int_\R \big[\psi(x) + \psi'(x) + \abs{E(t, x)}\psi(x)\big] p(t, x) \d x \leq C e^{-(\sqrt{N} + c)a} e^{-ct}.
  \end{equation}
\end{lemma}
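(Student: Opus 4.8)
The plan is to bound the whole bracket $\psi(x)+\psi'(x)+\abs{E(t,x)}\psi(x)$ by a crude, time-independent spatial envelope, and then estimate the corresponding moments of $p$ using the factorization \eqref{eq:p-zero}. From \eqref{eq:adjoint-boundary} we have $\psi(x),\psi'(x)\le Ce^{\sqrt{N}x}$ for $x\le 0$ and $\psi(x),\psi'(x)\le C(1+x)$ for $x\ge 0$; combined with the bound $\abs{E(t,x)}\le Ce^{-c\abs{x}}/\sqrt{t+1}\le C$ from \eqref{20may1932}, this yields, for all $t\ge 0$ and $x\in\R$,
\begin{equation*}
  \psi(x)+\psi'(x)+\abs{E(t,x)}\psi(x)\le
  \begin{cases}
    Ce^{\sqrt{N}x} & x\le 0,\\
    C(1+x) & x\ge 0.
  \end{cases}
\end{equation*}
So \eqref{eq:p-late} will follow once we show $\int_{\R_-}e^{\sqrt{N}x}p(t,x)\d x$ and $\int_{\R_+}(1+x)p(t,x)\d x$ are each $\le Ce^{-(\sqrt{N}+c)a}e^{-ct}$ for $t\ge a$.

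Next I would insert the representation $p(t,x)=\Lambda(t;a)e^{-ax/(2t)}g(t,x)$ from \eqref{eq:p-zero}, with $\Lambda$ and $g$ as in \eqref{eq:Lambda} and \eqref{eq:g}, and exploit the hypothesis $t\ge a$ through the single inequality $a/(2t)\le 1/2<\sqrt{N}$ (valid since $N\ge 2$). On $\R_-$ the drift factor is then dominated by $\psi$:
\begin{equation*}
  \int_{\R_-}e^{\sqrt{N}x}p(t,x)\d x=\Lambda(t;a)\int_{\R_-}e^{(\sqrt{N}-a/(2t))x}g(t,x)\d x\le\Lambda(t;a)\int_\R g(t,x)\d x=\sqrt{4\pi t}\,\Lambda(t;a).
\end{equation*}
On $\R_+$ I would simply discard $e^{-ax/(2t)}\le 1$ and use the routine Gaussian moment bound $\int_\R(1+\abs{x})g(t,x)\d x\le Ct$ for $t\ge 1$, which gives $\int_{\R_+}(1+x)p(t,x)\d x\le Ct\,\Lambda(t;a)$. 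Since $t\ge a$ forces the exponent $3a/(4\lambda_* t)$ of $(t+1)$ in \eqref{eq:Lambda} to be bounded, and since $e^{-a^2/(4t)}\le 1$, both bounds are subsumed by
\begin{equation*}
  \int_\R\big[\psi(x)+\psi'(x)+\abs{E(t,x)}\psi(x)\big]p(t,x)\d x\le C(t+1)^{C}e^{-Nt}.
\end{equation*}

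Finally, I would extract the claimed decay from $(t+1)^{C}e^{-Nt}$ using $t\ge a$ and the elementary fact that $N-\sqrt{N}\ge 2-\sqrt{2}>0$ for $N\ge 2$. Fixing $c>0$ small enough that $N-\sqrt{N}-c>0$, we write $Nt=(\sqrt{N}+c)t+(N-\sqrt{N}-c)t\ge(\sqrt{N}+c)a+(N-\sqrt{N}-c)t$, so $(t+1)^{C}e^{-Nt}\le e^{-(\sqrt{N}+c)a}\,(t+1)^{C}e^{-(N-\sqrt{N}-c)t}\le Ce^{-(\sqrt{N}+c)a}e^{-c't}$ after absorbing the polynomial into a fraction of the last exponential; replacing $c$ by $\min\{c,c'\}$ gives \eqref{eq:p-late}, with all constants uniform for $t\ge a\ge 1$. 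I do not expect a genuine obstacle here: unlike the early and middle regimes, where the prefactor $(t+1)^{3\sqrt{N}/(2\lambda_*)}$ and the competition between $p$ and the correctors had to be tracked precisely, the super-exponential gain $e^{-(\sqrt{N}+c)a}$ over the base scale $e^{-\sqrt{N}a}$ is automatic once $t\ge a$, because the ballistic absorption $e^{-Nt}$ in $\Lambda$ already exceeds the $e^{-\sqrt{N}a}$ threshold by a definite margin; consequently every estimate on $p$ above can be completely crude.
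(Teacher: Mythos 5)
Your proof is correct and follows essentially the same route as the paper's: bound $\psi+\psi'+|E|\psi$ by $Ce^{\sqrt{N}x}$ on $\R_-$ and $C(1+x)$ on $\R_+$, use the factorization \eqref{eq:p-zero} and $a/(2t)\le\sqrt{N}$ to reduce to $Ct^{C}\Lambda(t;a)\le C t^C e^{-Nt}$, then split $Nt\ge(\sqrt{N}+c)a+(N-\sqrt{N}-c)t$ to absorb the polynomial. The paper phrases the last step via $\theta(\xi)\ge N\xi$ and $\theta(\xi)\ge\sqrt N+c+c\xi$ for $\xi\ge1$, but this is just a repackaging of the same inequality $N-\sqrt N>0$ you use directly.
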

\begin{proof}
For $x\le 0$ we recall that $E$ is bounded and we simply use (\ref{20may2220}) and \eqref{eq:Lambda}, which give
\begin{equation}
  \label{20may2424}
\int_{\R_-} \big[\psi(x)+{\psi'}(x)+\abs{E(t,x)}\psi(x)\big] p(t, x)\d x \le C \sqrt{t} \Lambda(t;a) \leq Ct^C e^{-a\theta(t/a)}
\end{equation}
for $\theta$ defined in \eqref{eq:rate-function}.
Now, $\min\theta = \sqrt{N}$ is uniquely attained at $\xi_* = 1/(2 \sqrt{N}) < 1$, and $\theta(\xi) \geq N \xi$.
If $c > 0$ is sufficiently small, it follows that
\begin{equation}
  \label{eq:theta-late}
  \theta(\xi) \geq \sqrt{N} + c + c \xi \quad \textrm{for all } \xi \geq 1.
\end{equation}
Together with \eqref{20may2424}, this implies
\begin{equation}
  \label{eq:p-late-left}
  \int_{\R_-} \big[\psi(x)+{\psi'}(x)+\abs{E(t,x)}\psi(x)\big] p(t, x)d x \le C e^{-(\sqrt{N} + c)a} e^{-ct}.
\end{equation}
For $x \ge 0$, recall that $\psi(x)\le C(1+x)$ and $\psi'(x)\le C$. Then \eqref{eq:p-zero} implies
\begin{equation*}
  \int_{\R_+} \big[\psi(x)+\psi'(x)+\abs{E(t,x)}\psi(x)\big] p(t, x) \d
x \leq C \Lambda(t;a)\int_{0}^\infty (1+x) e^{-{ax}/{(2t)}} \d x \leq {C} t^C e^{-a \theta(t/a)}.
\end{equation*}
Again, \eqref{eq:theta-late} yields
\begin{equation*}
  \int_{\R_+} \big[\psi(x)+{\psi'}(x)+\abs{E(t,x)}\psi(x)\big] p(t, x)d x \le C e^{-(\sqrt{N} + c)a} e^{-ct}.
\end{equation*}
Combining this with \eqref{eq:p-late-left}, we obtain \eqref{eq:p-late}.
\end{proof}
 
\subsection*{The main corrector contribution}

Next, we control $\qm$ at late times.
\begin{lemma}
  \label{lem:qm-late}
  There exist $c > 0$ and $C > 0$ such that for all $t \geq a$,
  \begin{equation}
    \label{eq:qm-psi'-late}
    \int_\R \psi'(x) \qm(t, x) \d x \leq \frac{C}{\sqrt{t}} a^{3\sqrt{N}/(2\lambda_*)} e^{-\sqrt{N} a}
  \end{equation}
  and
  \begin{equation}
    \label{eq:qm-E-late}
    \int_\R \abs{E(t, x)} \psi(x) \qm(t, x) \d x \leq \frac{C}{t^2} a^{3\sqrt{N}/(2\lambda_*)} e^{-\sqrt{N} a}.
  \end{equation}
\end{lemma}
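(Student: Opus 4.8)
The plan is to estimate both integrals through the Duhamel representation \eqref{20may2216}, $\qm(t,\cdot)=\int_{\earlyt}^{t}q^s(t,\cdot)\,\d s$, together with the bounds on $q^s$ furnished by Lemma~\ref{lem:qs-integrals}. Integrating the second and third lines of \eqref{eq:q-s-bounds-middle} in $s$ over $[\earlyt,t]$, and pulling the factor $(t+1)^{-1/2}\le Ct^{-1/2}$ out of the last one, the lemma reduces to the single estimate
\begin{equation*}
  \mathcal J_\alpha(t):=\int_{\earlyt}^{t}\frac{s^2}{a^2}\,\Lambda(s;a)\,(t-s+1)^{-\alpha}\,\d s\ \le\ \frac{C}{t^{\alpha}}\,a^{3\sqrt N/(2\lambda_*)}e^{-\sqrt N a}\qquad\text{for }\alpha\in\{\tfrac12,\tfrac32\},\ t\ge a,
\end{equation*}
since \eqref{eq:qm-psi'-late} is then the case $\alpha=\tfrac12$ and \eqref{eq:qm-E-late} the case $\alpha=\tfrac32$ together with the extracted $t^{-1/2}$. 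I would split $\mathcal J_\alpha=\int_{\earlyt}^{a}+\int_{a}^{t}$ according to whether $s\le a$ or $s\ge a$.

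On $[\earlyt,a]$ the prefactor $s^2/a^2$ is at most $1$, and \eqref{20may2108} bounds $\Lambda(s;a)$ by $\frac{C}{\sqrt a}\,a^{3\sqrt N/(2\lambda_*)}e^{-\sqrt N a}e^{-c(t_*-s)^2/a}$ (using $s\le a$ to replace $(s+1)^{3\sqrt N/(2\lambda_*)}$ by $Ca^{3\sqrt N/(2\lambda_*)}$), so this piece is a Gaussian integral in $s$ weighted by $(t-s+1)^{-\alpha}$. Since $t_*=a/(2\sqrt N)\le a/2\le t/2$ we have $t-t_*\ge t/2$; cutting the $s$-integral at $|s-t_*|=(t-t_*)/2$, on the bulk one has $t-s+1\ge(t-t_*)/2\ge t/4$, hence $(t-s+1)^{-\alpha}\le Ct^{-\alpha}$ while the Gaussian contributes $\le C\sqrt a$, and on the complement the Gaussian is $\le e^{-c(t-t_*)^2/(4a)}\le e^{-ct/16}$ (using $t\ge a$), which dominates any polynomial in $t$. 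This yields $\int_{\earlyt}^{a}\le Ct^{-\alpha}a^{3\sqrt N/(2\lambda_*)}e^{-\sqrt N a}$.

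On $[a,t]$ we have $s/a\ge 1$, so \eqref{eq:theta-late} gives $a\theta(s/a)\ge\sqrt N a+cs$; bounding the slowly varying polynomial prefactor of $\Lambda$ crudely by $C(s+1)^C$, this gives $\frac{s^2}{a^2}\Lambda(s;a)\le C(s+1)^{C}e^{-\sqrt N a}e^{-cs}$. A routine (if slightly tedious) convolution estimate for $\int_{a}^{t}(s+1)^{C}e^{-cs}(t-s+1)^{-\alpha}\,\d s$ — substituting $u=t-s$, using that the integrand peaks near $u=t-a$, and distinguishing $t\ge 2a$ from $a\le t\le 2a$ (where $(t-s+1)^{-\alpha}$ is merely integrable) — produces a bound $\le Ct^{-\alpha}e^{-\sqrt N a}e^{-ca/2}$; since $a\ge 1$, the surviving $e^{-ca/2}$ absorbs the polynomial $a^{3\sqrt N/(2\lambda_*)}$, so $\int_{a}^{t}\le Ct^{-\alpha}a^{3\sqrt N/(2\lambda_*)}e^{-\sqrt N a}$ as well. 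Adding the two pieces gives the displayed estimate, and hence the lemma.

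The main obstacle is the region $s$ close to $t$ in the Duhamel integral, where the kernel $(t-s+1)^{-\alpha}$ becomes singular (and, for $\alpha=\tfrac12$, not even integrable over a window of length $\asymp t$): one must check that $\Lambda(s;a)$ decays fast enough there to absorb this singularity \emph{uniformly as $t\to\infty$}, and moreover to recover exactly the power $t^{-\alpha}$ rather than a weaker one such as $t^{-\alpha+1/2}$. Everything needed is elementary — the inequalities $t^{k}e^{-ct}\le C_k e^{-ct/2}$ and $e^{-ct}\le e^{-ca}$ for $t\ge a$, plus the concentration of $\Lambda(s;a)$ near $s=t_*\ll t$ — but these bookkeeping points, and the case split $t\ge 2a$ versus $a\le t\le 2a$, must be carried through with care.
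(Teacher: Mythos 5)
Your proposal is correct and follows essentially the same route as the paper: the Duhamel representation, Lemma~\ref{lem:qs-integrals}, the split at $s=a$ with \eqref{20may2108} on $[\earlyt,a]$ and \eqref{eq:theta-late} on $[a,t]$, and Gaussian concentration of $\Lambda(\cdot;a)$ near $t_*$ to recover the exact power $t^{-\alpha}$. The only cosmetic difference is that where you distinguish $t\ge 2a$ from $a\le t\le 2a$ on the late piece, the paper disposes of both cases at once via the uniform convolution bound \eqref{eq:poly-exp-integral}, $\int_0^t e^{-cs}(t-s+1)^{-\alpha}\,\dn s\le C_\alpha(t+1)^{-\alpha}$.
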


\begin{proof}
  We recall the Duhamel formula (\ref{20may2216}) for $q^s$, the solution to \eqref{eq:q-s}. We also recall Lemma~\ref{lem:qs-integrals}: for $t\ge s\ge\earlyt$,
\be\label{eq:q-s-bounds-late}
\bal
&\int_\R {\psi'}(x)q^s(t,x) \d x\leq C \frac{s^2}{a^2}\Lambda(s;a)
(t - s + 1)^{- 1/ 2},\\
&\int_\R \abs{E(t,x)}{\psi}(x)q^s(t,x) \d x
\leq C \frac{s^2}{a^2} \Lambda(s;a) t^{- 1/ 2} (t - s + 1)^{-3/ 2}.
\enbal
\ee
To bound the contributions of $\qm$, we integrate over $s \in [\earlyt, t]$.
We rely on two different estimates for $\Lambda$.
When $s \in [\earlyt, a]$, \eqref{20may2108} implies:
\begin{equation}
  \label{eq:Lambda-s-middle}
  \frac{s^2}{a^2} \Lambda(s; a) \leq \frac{C}{\sqrt{a}} a^{3\sqrt{N}/(2 \lambda_*)} e^{-\sqrt{N} a} e^{-\frac{c(s - t_*)^2}{a}} \quad \textrm{for } s \in [\earlyt, a].
\end{equation}
This estimate cannot hold for $s \geq a$, since $\theta$ is not \emph{uniformly} convex.
Nonetheless, \eqref{eq:theta-late} yields:
\begin{equation}
  \label{eq:Lambda-s-late}
  \frac{s^2}{a^2}\Lambda(s; a) \leq C s^2 a^C e^{-a \theta(a/s)} \leq C e^{-(\sqrt{N} + c)a} e^{-cs} \quad \textrm{for } s \in [a, \infty).
\end{equation}

We first use \eqref{eq:q-s-bounds-late} and \eqref{eq:Lambda-s-middle} to control the contributions of $q^s$ when $s \in [\earlyt, a]$.
Recalling the definition \eqref{eq:conv} of $Z_\alpha(t)$ and noticing
from \eqref{20may2226} that $Z_{1/2}(t)\le C\sqrt{a/t}$ for $t\ge a$, we find
\be
\label{eq:q-s-psi'-middle-late}
\bal
\int_{\earlyt}^{a} \int_\R \psi'(x)q^s(t,x) \d x \ds s &\leq
\frac{C}{\sqrt{a}}
a^{{3\sqrt{N}}/{(2\lambda_*)}}
e^{-\sqrt{N}a}
\int_{\earlyt}^a (t - s + 1)^{- 1/ 2}e^{-c(s-t_*)^2/a} \d s\\
&
\le \frac{C}{\sqrt{t}}
a^{{3\sqrt{N}}/{(2\lambda_*)}}
e^{-\sqrt{N}a}.
\enbal
\ee
Similarly, noticing from \eqref{20may2226} that
$Z_{3/2}(t)\le C\sqrt a/t^{3/2}$, we obtain
\be\label{eq:q-s-E-middle-late}
\bal
\int_{\earlyt}^{a} \int_\R \abs{E(t, x)} \psi(x)q^s(t,x) \d x \ds s
&\leq \farc{C}{\sqrt{at}}a^{{3\sqrt{N}}/{(2\lambda_*)}}
e^{-\sqrt{N}a}\int_{\earlyt}^a (t - s + 1)^{-3/ 2}e^{-c(s-t_*)^2/a} \d s\\
&\le \farc{C}{t^2}a^{{3\sqrt{N}}/{(2\lambda_*)}}
e^{-\sqrt{N}a}.
\enbal
\ee
Next, we use \eqref{eq:q-s-bounds-late} and \eqref{eq:Lambda-s-late} to control the contributions of $q^s$ when $s \geq a$.
To do so, we rely on the following bound: for each $\al \geq 0$ and $t > 0$,
\begin{equation}
  \label{eq:poly-exp-integral}
  \int_0^t e^{-cs} (t - s + 1)^{-\al} \d s \leq \int_0^{t/2} e^{-cs} \left(\frac{t}{2} + 1\right)^{-\al} \d s + \int_{t/2}^t e^{-cs} \d s \leq C_\al (t + 1)^{-\al}.
\end{equation}
Thus
\begin{equation}
  \label{eq:q-s-psi'-late-late}
    \int_a^t \int_\R \psi'(x)q^s(t,x) \d x \ds s \leq
    C e^{-(\sqrt{N} + c) a} \int_{a}^t (t - s + 1)^{- 1/ 2} e^{-cs} \d s \leq \frac{C}{\sqrt{t}} e^{-(\sqrt{N} + c) a}
\end{equation}
and
\begin{equation}
  \label{eq:q-s-E-late-late}
    \int_a^t \int_\R \abs{E(t, x)} \psi(x) q^s(t,x) \d x \ds s \leq
    \frac{C}{\sqrt{t}} e^{-(\sqrt{N} + c) a} \int_{a}^t (t - s + 1)^{- 3/ 2} e^{-cs} \d s \leq \frac{C}{t^2} e^{-(\sqrt{N} + c) a}.
\end{equation}
We recall that
\begin{align*}
  &\int_\R \psi'(x) \qm(t, x) \d x = \int_{\earlyt}^t \int_\R \psi'(x)q^s(t,x) \d x \ds s,\\
  &\int_\R \abs{E(t, x)} \psi(x) \qm(t, x) \d x = \int_{\earlyt}^t \int_\R \abs{E(t, x)} \psi(x) q^s(t,x) \d x \ds s.
\end{align*}
Therefore \eqref{eq:q-s-psi'-middle-late} and \eqref{eq:q-s-psi'-late-late} imply \eqref{eq:qm-psi'-late}, while \eqref{eq:q-s-E-middle-late} and \eqref{eq:q-s-E-late-late} imply \eqref{eq:qm-E-late}.
\end{proof}

\subsection*{The proof of Lemma~\ref{lem-may2402}}

First, we recall (\ref{20may2244}):
\begin{equation}\label{20may2438}
I(t)\ge
ca^{{3\sqrt{N}}/{(2\lambda_*)}}e^{-\sqrt{N}a}~~\hbox{for $t \geq t_*$.}
\end{equation}
Next, collecting Lemmas~\ref{lem:early-corrector}, \ref{lem:p-late}, and \ref{lem:qm-late},
we see that (\ref{mar1912}) gives
\begin{equation}\label{20may2439}
|\dot I(t)| \leq
\farc{3}{2\lambda_*(t+1)} \int_\R  \psi'(x) r(t,x) \d x +
\int_\R \abs{E(t,x)} r(t,x)\psi(x) \d x \le \farc{C}{t^{3/2}}a^{{3\sqrt{N}}/{(2\lambda_*)}}e^{-\sqrt{N}a}.
\end{equation}
We obtain (\ref{20may2420}) from \eqref{20may2438} and \eqref{20may2439}.\hfill\qed

\subsection*{Proof of the main result}

As we have mentioned, Proposition~\ref{prop:main} is an immediate consequence of 
Lemmas~\ref{lem-may1506}, \ref{lem-may2202}, and~\ref{20may2420}.
Combining
\eqref{eq-early-int},
\eqref{eq-middle-int} and
\eqref{eq-late-int}, we find
\begin{equation*}
\log\frac{I(\infty)}{I(0)}=
\frac{3\sqrt{N}}{2\lambda_*}\log\frac{a}{2\sqrt N}+ O\big(a^{-1/2}\big).
\end{equation*}
Exponentiating, we obtain Proposition~\ref{prop:main}.\hfill\qed

\section{The early corrector --- Proof of Lemma~\ref{lem:early-corrector}}
\label{sec:qe}

Given $\eps_0 \in (0, 1)$, to be chosen later, Lemma~\ref{lem:front-decay} ensures the existence of~$K \geq 0$ such that
\begin{equation*}
N(1 - \eps_0)  \leq V(t,x)\hbox{ for all $t>0$ and $x<-K$.}
\end{equation*}
The same lemma implies that
\begin{equation*}
  N-V(t,x) \leq Be^{\gamma_*x}.
\end{equation*}
Hence if $\ti q$ solves
\begin{equation*}
\pdr{\ti q}{t} = \pdrr{\ti q}{x} - \frac{3}{2\lambda_*(t+1)} \pdr{\ti q}{x} - 
N (1 - \eps_0) \one_{(-\infty, -K)}(x) \ti q+ B e^{\gamma_* x}\one_{[0,\earlyt]}(t)p, \quad \ti q(0, x) = 0,
\end{equation*}
comparison with \eqref{eq:scattering-early} implies 
\begin{equation*}
\qe(t,x) \leq \ti q(t,x).
\end{equation*}

To estimate $\ti q$, we decompose it into two parts, $\rho$ and $\sigma$
\begin{equation*}
 \qe(t,x) \leq \ti q(t,x) = \rho(t,x)+\sigma(t,x),
\end{equation*}
where $\rho$ is the solution to
\begin{equation}\label{eq:rho-ev}
\pdr{\rho}{t} = \pdrr{\rho}{x} - \frac{3}{2\lambda_*(t + 1)} \pdr{\rho}{x} - 
N(1 - \eps_0) \rho + B e^{\gamma_* x} \one_{[0,\earlyt]}(t)p, \quad \rho(0, x) = 0,
\end{equation}
and $\sigma$ the solution to
\begin{equation}\label{eq:sigma-ev}
\pdr{\sigma}{t} = \pdrr{\sigma}{x} - \frac{3}{2\lambda_*(t+1)} \pdr{\sigma}{x} 
-N(1 - \eps_0) \one_{(-\infty, -K)}(x) \sigma + N(1 - \eps_0) \one_{[-K, \infty)}(x) \rho, \quad \sigma(0, x) = 0.
\end{equation}
We will be able to estimate $\rho$ more or less explicitly.
For $\sigma$, we emphasize that the forcing in the right side of (\ref{eq:sigma-ev}) is supported on $[-K, \infty)$.
That is, unlike $r$ and $\qe$, $\sigma$ is never forced deep in $\R_-$.
There is thus no need to estimate it with a further corrector---we can
control it with Lemma~\ref{lem:super}.

Lemma~\ref{lem:early-corrector} is a consequence of the two following lemmas:
\begin{lemma}
  \label{lemmaaboutrho}
  There exist $C, c>0$ independent of $a$ and $t$ such that for all $a \geq 1$ and $t \geq 0$:
\be\int_\R \psi(x) \rho(t,x)\d x 
+  \int_\R \psi'(x)\rho(t,x)\d x
+  \int_\R \psi (x)\big|E(t,x)\big| \rho(t,x)\d x\le C e^{-(\sqrt N+c) a-ct}
\label{eq:lemmaaboutrho}
\ee
\end{lemma}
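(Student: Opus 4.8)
The plan is to solve \eqref{eq:rho-ev} explicitly: its absorption coefficient $N(1-\eps_0)$ is constant, and the forcing is a tilt of the explicit Gaussian $p$ from \eqref{eq:explicit}, whose mean already records the action of the drift. I would therefore apply Duhamel's formula, using that the kernel of $\partial_t-\partial_x^2+\frac{3}{2\lambda_*(t+1)}\partial_x$ from time $s$ to time $t$ is a Gaussian of variance $2(t-s)$ translated by $\frac{3}{2\lambda_*}\log\frac{t+1}{s+1}$. Since tilting a Gaussian of variance $2s$ by $e^{\gamma_*x}$ shifts its mean by $2\gamma_*s$ and multiplies its total mass by $e^{\gamma_*^2s}$, this yields
\begin{equation*}
  \rho(t,x)=Be^{-\gamma_*a}\int_0^{t\wedge\earlyt}(s+1)^{3\gamma_*/(2\lambda_*)}\,e^{(\gamma_*^2-N)s-N(1-\eps_0)(t-s)}\,\frac{1}{\sqrt{4\pi t}}\exp\!\Big(-\frac{(x-\mu(s,t))^2}{4t}\Big)\d s,
\end{equation*}
with $\mu(s,t)=-a+2\gamma_*s+\frac{3}{2\lambda_*}\log(t+1)$; the amplitude $e^{(\gamma_*^2-N)s}$ decays in $s$ because $\gamma_*^2<N$.

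Next I would reduce the three weighted integrals to Gaussian moments. By \eqref{eq:adjoint-boundary} and \eqref{20may1932}, each of $\psi,\psi',\psi\abs E$ is bounded by $Ce^{\sqrt Nx}$ on $\R_-$ and by $C(1+x_+)$ on $\R_+$ (with an extra $(t+1)^{-1/2}$ for the one with $\abs E$, which I just discard). So by Fubini everything comes down to $\int_{-\infty}^0 e^{\sqrt Nx}g_{s,t}(x)\d x$ and $\int_0^\infty(1+x)g_{s,t}(x)\d x$, where $g_{s,t}(x)=\frac{1}{\sqrt{4\pi t}}e^{-(x-\mu(s,t))^2/(4t)}$. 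Completing the square, $e^{\sqrt Nx}g_{s,t}$ is $e^{\sqrt N\mu+Nt}$ times a Gaussian centered at the ``tilted center'' $\mu+2\sqrt Nt$, so a Gaussian-tail estimate gives the single bound
\begin{equation*}
  \int_{-\infty}^0 e^{\sqrt Nx}g_{s,t}(x)\d x\ \le\ C\exp\!\Big(\sqrt N\mu(s,t)+Nt-\frac{\big((\mu(s,t)+2\sqrt Nt)_+\big)^2}{4t}\Big),
\end{equation*}
which equals $Ce^{\sqrt N\mu+Nt}$ when $\mu+2\sqrt Nt\le 0$ and $Ce^{-\mu^2/(4t)}$ when $\mu+2\sqrt Nt>0$ (this is the decisive gain); the $\R_+$ moment is a Gaussian tail $\le C(1+\sqrt t+\abs{\mu})e^{-\mu^2/(4t)}$ whenever $\mu(s,t)<0$, and $\mu(s,t)\ge 0$ forces $t$ exponentially large in $a$, a case trivialized by the factor $e^{-N(1-\eps_0)(t-\earlyt)}$. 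Substituting into the $s$-integral, writing $s=\sigma a$, $t=\tau a$, and discarding the logarithmic corrections in $\mu$ (they cost only a power of $a$), the whole lemma reduces to showing $G(\sigma,\tau)\le-(\sqrt N+c)$ for a fixed $c>0$, uniformly over $0\le\sigma\le\min\{\tau,\earlyxi\}$ and $\tau>0$, where
\begin{equation*}
  G(\sigma,\tau)=-\gamma_*+(\gamma_*^2-N)\sigma-N(1-\eps_0)(\tau-\sigma)+\sqrt N(2\gamma_*\sigma-1)+N\tau-\frac{\big((2\gamma_*\sigma-1+2\sqrt N\tau)_+\big)^2}{4\tau},
\end{equation*}
once $\eps_0$ is chosen small; the $t\ge\earlyt$ part of the lemma is built into $e^{-N(1-\eps_0)(t-\earlyt)}$, which also supplies the $e^{-ct}$.

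The crux is then this elementary optimization, and the two endpoints of the admissible interval for $\earlyxi$ enter in complementary ways. The weighted mass of $\rho$ peaks near $t=\earlyt$, i.e.\ $(\sigma,\tau)\approx(\earlyxi,\earlyxi)$; the lower constraint $\earlyxi>\frac{1}{2(2\sqrt N-\sqrt{N-1})}=\frac{1}{2(\sqrt N+\gamma_*)}$ makes the tilted center $2(\sqrt N+\gamma_*)\earlyxi-1$ strictly positive there, so the decaying branch applies, and a direct computation gives the exact identity
\begin{equation*}
  G(\earlyxi,\earlyxi)=-\theta(\earlyxi),\qquad \theta(\xi)=N\xi+\frac{1}{4\xi},
\end{equation*}
the rate function of \eqref{eq:rate-function}; since $\earlyxi<\frac{1}{2\sqrt N}$ is strictly to the left of the minimizer of $\theta$, we get $\theta(\earlyxi)>\sqrt N$ and hence $c:=\theta(\earlyxi)-\sqrt N>0$. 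I would then check that $(\earlyxi,\earlyxi)$ is the global maximum of $G$: in the decaying branch $G$ is increasing in $\sigma$ up to $\sigma=\earlyxi$ and decreasing in $\tau$ beyond $\tau=\earlyxi$; the constraint $\sigma\le\tau$ confines the weak branch ($\mu+2\sqrt Nt\le0$) to $\sigma<\frac{1}{2(\sqrt N+\gamma_*)}$, where one finds (taking $\eps_0\to0$) $\sup G=-\theta\big(\tfrac{1}{2(\sqrt N+\gamma_*)}\big)<-\theta(\earlyxi)$ because $\theta$ decreases on $(0,\tfrac{1}{2\sqrt N})$; and $G(0,0^+)=-(\sqrt N+\gamma_*)<-\theta(\earlyxi)$ as well.

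The main obstacle is precisely this global control of $G$: tracking the two branches of the Gaussian estimate across the entire $(s,t)$ range, handling the $\eps_0$-perturbations of the extremal computations, and checking that the two endpoint conditions on $\earlyxi$ conspire to push the rate strictly below $-\sqrt N$. The cancellation $G(\earlyxi,\earlyxi)=-\theta(\earlyxi)$ is the mechanism that produces the extra factor $e^{-ca}$ here, and ultimately the algebraic prefactor $a^{3\sqrt N/2}$ of Theorem~\ref{thr:main}.
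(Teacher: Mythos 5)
Your approach is correct in substance and arrives at the same bound, but it is organized quite differently from the paper's proof, so the comparison is worth making.

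Both you and the paper start from the same Duhamel representation $\rho=\int_0^{t\wedge\earlyt}\rho^s\,\dn s$, write $\rho^s$ as an explicit Gaussian, and use the two bounds $\psi\le Ce^{\sqrt N x}$ on $\R_-$ and $\psi\le C(1+x)$ on $\R_+$. The paper then splits into two cases according to whether the \emph{crude} estimate $\psi\le Ce^{\sqrt Nx}$ everywhere already suffices --- the dichotomy encoded in \eqref{14jul1789}/\eqref{not14jul1789} --- and, in the harder case, performs explicit sub-analyses on the time ranges $t\le t_0$, $t\in[t_0,a]$, $t\ge a$ to bound $\tilde\rho^s(t,0)$. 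You instead always use the sharp two-sided bound on $\psi$, rescale $s=\sigma a$, $t=\tau a$, and reduce the whole argument to a single constrained optimization of the rate function $G(\sigma,\tau)$. The identity $G(\earlyxi,\earlyxi)=-\theta(\earlyxi)$ (and more generally $G(\sigma,\sigma)=-\theta(\sigma)$ on the strong branch), together with strict convexity of $\theta$ and the position $\earlyxi<1/(2\sqrt N)$ of the minimizer, is the clean mechanism producing the extra $e^{-ca}$. This is a genuinely more conceptual route and ties directly to the rate function \eqref{eq:rate-function} that governs $\Lambda$ elsewhere in the paper; the paper's route is more elementary but requires bookkeeping of three separate sub-regimes of $t$.

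Two points should be tightened before the argument is complete. First, the claim that $(\earlyxi,\earlyxi)$ is the global maximizer of $G$ over $\{0\le\sigma\le\min(\tau,\earlyxi),\ \tau>0\}$ needs to be verified, not just asserted: one must check $\partial_\sigma G>0$ throughout, locate the interior $\tau$-critical point $\tau_*(\sigma)=(1-2\gamma_*\sigma)/(2\sqrt N)$, observe that $\tau_*<\sigma$ exactly when $\sigma>1/(2\alpha)$ with $\alpha=\sqrt N+\gamma_*$, and treat separately the weak branch $\sigma\le 1/(2\alpha)$ where the positive part vanishes. (These checks do go through --- $G$ on the weak branch is flat in $\tau$ and matches the strong-branch maximum $-\theta(1/(2\alpha))$ at the crossover, and $\theta$ is decreasing on $(0,1/(2\sqrt N))$ --- but they constitute the bulk of the proof, not a remark.) Second, the reduction to ``$G\le-(\sqrt N+c)$'' yields only $e^{-(\sqrt N+c)a}$; the factor $e^{-ct}$ in \eqref{eq:lemmaaboutrho} requires the additional observation that $G(\sigma,\tau)$ decays linearly in $\tau$ as $\tau\to\infty$ (the Gaussian exponent behaves like $-N\tau+O(1)$), so that in fact $G(\sigma,\tau)\le -(\sqrt N+c)-c\tau$ uniformly. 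You gesture at this but it should be stated as part of the target bound.
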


\begin{lemma}
  \label{lemmaaboutsigma}
  There exist $C, c > 0$ independent of $a$ and $t$ such that for all $a \geq 1$ and $t \geq 0$:
  \begin{align*}
    \begin{split}
      &\int_\R \psi(x) \sigma(t,x) \d x \leq Ce^{-(\sqrt{N}+c)a},\\
      &\int_\R \psi'(x) \sigma(t,x) \d x \leq \frac{C}{\sqrt{t + 1}} e^{-(\sqrt{N}+c)a},\\
      &\int_\R  \psi(x) \abs{E(t,x)} \sigma(t,x) \d x \leq \farc{C}{(t + 1)^2} e^{-(\sqrt{N}+c)a}.
    \end{split}
  \end{align*}
\end{lemma}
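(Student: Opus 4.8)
The plan is to control $\sigma$ through a Duhamel representation, applying Lemma~\ref{lem:super} to the homogeneous flow and invoking the explicit structure of $\rho$ only on $[-K,\infty)$. First I would write $\sigma(t,x)=\int_0^t\sigma^s(t,x)\,\d s$, where for each $s\ge0$ the function $\sigma^s$ solves the homogeneous version of \eqref{eq:sigma-ev} --- absorption $-N(1-\eps_0)\one_{(-\infty,-K)}\sigma^s$ and the drift $-\frac{3}{2\lambda_*(t+1)}\pdr{\sigma^s}{x}$, for $t>s$ --- started from $\sigma^s(s,\cdot)=N(1-\eps_0)\one_{[-K,\infty)}(\cdot)\,\rho(s,\cdot)$. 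The point is that, unlike $r$ and $\qe$, $\sigma^s$ is forced only on $[-K,\infty)$, away from the large-absorption region, so each $\sigma^s$ is a standard half-line absorbing-heat flow to which Lemma~\ref{lem:super} applies directly.

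The crux is then a pointwise estimate for $\rho$ on $[-K,\infty)$: for a suitable small $\eps_0\in(0,1)$ and some $C,c,\kappa_+>0$,
\[
  \one_{[-K,\infty)}(x)\,\rho(s,x)\le C\,e^{-(\sqrt N+c)a}\,e^{-cs}\big[\one_{[-K,0)}(x)+e^{-\kappa_+x}e^{-x^2/(8s)}\one_{[0,\infty)}(x)\big]
\]
for all $s\ge0$ and $x\in\R$. To prove it I would use that the absorption in \eqref{eq:rho-ev} is the \emph{constant} $N(1-\eps_0)$: Duhamel and the Gaussian \eqref{20may1404}, after completing the square in $e^{\gamma_*y}p(\tau,y)$ and applying the heat semigroup, present $\rho(s,x)$ as an integral over the source time $\tau\in[0,\min\{s,\earlyt\}]$ of Gaussians of width $\sqrt s$ centered at $-a+2\gamma_*\tau+O(\log\tau)$, carrying the prefactor $e^{-\gamma_*a}(\tau+1)^{3\gamma_*/(2\lambda_*)}e^{-c_N\tau}e^{-N(1-\eps_0)(s-\tau)}$, where $c_N:=2\sqrt N\lambda_*-\lambda_*^2$ (the drift merely shifts the centers by $O(\log s)$, harmless on the scale $x\sim\sqrt s$). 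Because $\earlyxi<1/(2\sqrt N)$ keeps these centers $\le-ca$ for $\tau\le\earlyt$, on $[-K,\infty)$ the Gaussian contributes $e^{-(x+a-2\gamma_*\tau)^2/(4s)}$; for $\eps_0$ small one has $N(1-\eps_0)>c_N$, which makes the exponent $-\gamma_*a-c_N\tau-N(1-\eps_0)(s-\tau)-(x+a-2\gamma_*\tau)^2/(4s)$ increasing in $\tau$, so it is maximized at $\tau=\min\{s,\earlyt\}$, and a further elementary optimization shows the worst $s$ is $s=\earlyt$. Evaluating there, the term $\gamma_*a$ cancels, and the two arithmetic identities
\[
  c_N+\gamma_*^2=N\qquad\text{and}\qquad\frac{(1-2\gamma_*\earlyxi)^2}{4\earlyxi}+c_N\earlyxi>\lambda_*\quad\Big(\text{using }\earlyxi<\tfrac1{2\sqrt N}\text{ and }\earlyxi>\tfrac1{2(2\sqrt N-\lambda_*)}\Big)
\]
force the $a$-rate of the exponent to be $\sqrt N+c$ for some $c>0$, leaving a spare $e^{-cs}$; for $x\ge0$ the additional monotone terms $-\frac{x(a-2\gamma_*\tau)}{2s}$ and $-\frac{x^2}{4s}$ supply the factors $e^{-\kappa_+x}$ and $e^{-x^2/(4s)}\le e^{-x^2/(8s)}$. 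I expect this step --- which is where the admissible window for $\earlyxi$ is genuinely used, and which is essentially already carried out in the proof of Lemma~\ref{lemmaaboutrho} --- to be the only real difficulty.

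With this estimate in hand, the rest is bookkeeping. Set $\cN(s):=CN\,e^{-(\sqrt N+c)a}e^{-cs}$; then $w:=\sigma^s/\cN(s)$ satisfies \eqref{eq:super} with $\al=\sqrt{N(1-\eps_0)}$ and the $K$ above, and its data at time $s$ satisfies \eqref{eq:super-initial} with $\kappa_-:=\al/2<\al$ and the $\kappa_+$ above; crucially $\al,K,\kappa_-$ do not depend on $a,s,t$. Lemma~\ref{lem:super} then yields $\int_\R\psi\,\sigma^s(t,\cdot)\,\d x\le C\cN(s)$, $\int_\R\psi'\,\sigma^s(t,\cdot)\,\d x\le C\cN(s)(t-s+1)^{-1/2}$, and $\int_\R|E(t,\cdot)|\psi\,\sigma^s(t,\cdot)\,\d x\le C\cN(s)(t+1)^{-1/2}(t-s+1)^{-3/2}$, with $C$ uniform in $a,s,t$. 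Integrating over $s\in[0,t]$ --- using $\int_0^\infty e^{-cs}\,\d s<\infty$ for the first, and \eqref{eq:poly-exp-integral} with $\al=\tfrac12$ and $\al=\tfrac32$ (together with a split of the $s$-integral at $t/2$) for the other two --- produces the bounds $Ce^{-(\sqrt N+c)a}$, $\tfrac{C}{\sqrt{t+1}}e^{-(\sqrt N+c)a}$, and $\tfrac{C}{(t+1)^2}e^{-(\sqrt N+c)a}$ asserted in Lemma~\ref{lemmaaboutsigma}.
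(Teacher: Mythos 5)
Your overall architecture matches the paper's proof: a Duhamel decomposition $\sigma(t,x)=\int_0^t\sigma^s(t,x)\ds s$ where each $\sigma^s$ is forced only on $[-K,\infty)$, a pointwise estimate for $\one_{[-K,\infty)}\rho(s,\cdot)$ (the paper's Lemma~\ref{lem:sigma-s-init}), an application of Lemma~\ref{lem:super} to each $\sigma^s$, and a final integration in $s$ using \eqref{eq:poly-exp-integral}. However, there is a genuine gap in the pointwise estimate and its downstream use.

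You assert the bound
\[
  \one_{[-K,\infty)}(x)\,\rho(s,x)\le C\,e^{-(\sqrt N+c)a}\,e^{-cs}\big[\one_{[-K,0)}(x)+e^{-\kappa_+x}e^{-x^2/(8s)}\one_{[0,\infty)}(x)\big]
\]
with $\kappa_+$ a fixed constant. This cannot hold uniformly in $s$. As you yourself observe, the $x$-decay on $\R_+$ comes from the cross term $-x(a-2\gamma_*\tau)/(2s)$ in the expanded Gaussian, whose coefficient is of size $(a-2\gamma_*\tau)/(2s)\asymp a/s$ for $\tau\le\earlyt$. For $s$ large relative to $a$ (say $a\ll s\ll e^{c a}$, a range where the prefactor $e^{-cs}$ is still far from absorbing everything), the true exponential decay rate of $\rho(s,\cdot)$ on $\R_+$ is $\kappa a/s\to0$, so no fixed $\kappa_+>0$ gives a valid upper bound. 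Compare the paper's Lemma~\ref{lem:sigma-s-init}, where the rate is explicitly $\kappa\min\{1,a/s\}$. The error propagates: when you feed $\kappa_+=\kappa\min\{1,a/s\}$ into Lemma~\ref{lem:super}, the bounds carry a factor $\max\{\kappa_+^{-2},1\}=\max\{(s/a)^2,1\}$, so the constant in your inequalities for $\int\psi\,\sigma^s$, $\int\psi'\,\sigma^s$, and $\int|E|\psi\,\sigma^s$ is \emph{not} uniform in $s$, contrary to what you claim. The fix is the step the paper performs after \eqref{eq:sigma-s-init}: note that for $a\ge1$ one has $\max\{(s/a)^2,1\}\le C(\eps)e^{\eps s}$, and absorb this polynomial growth into the $e^{-cs}$ factor by shrinking $c$ before integrating in $s$. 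Once that is inserted, your remaining bookkeeping (the $s$-integrals using $\int_0^\infty e^{-cs}\ds s<\infty$ and \eqref{eq:poly-exp-integral}) goes through as stated and yields the three claimed bounds.
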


\subsection*{Proof of Lemma~\ref{lemmaaboutrho}}

We use the Duhamel formula to write the solution to (\ref{eq:rho-ev}) as
\be\label{20may1322}
\rho(t,x)=\int_0^{t\wedge \earlyt} \rho^s(t,x) \d s.
\ee
Here, $\rho^s(t,x)$ is the solution to
\begin{equation}\label{eq:rho-s}
\bal
&\pdr{\rho^s}{t} = \pdrr{\rho^s}{x} - \frac{3}{2\lambda_*(t + 1)} \pdr{\rho^s}{x}
-N(1 - \eps_0) \rho^s  \hbox{ for $t>s$},&\qquad
&\rho^s(s, x) = Be^{\gamma_* x} p(s, x).
\enbal
\end{equation}
Recall the expression \eqref{eq:explicit} of $p(t,x)$ and the definition \eqref{eq:mu} of $\shift(t)$:
\begin{equation*}
\shift(t)=-a+\frac3{2\lambda_*}\log(t+1).
\end{equation*}
This allows us to write the initial condition $\rho^s(s, x)$ as a Gaussian:
\begin{equation*}
\bal
\rho^s(s, x) = Be^{\gamma_*x} p(s, x) &=Be^{\gamma_*x} \frac 1 {\sqrt{4\pi
s}} \exp\Big\{-Ns - \frac{1}{4s}\big[x-\shift(s)\big]^2\Big\} \\
&=\frac{B} {\sqrt{4\pi s}}e^{\gamma_*(\shift(s)+\gamma_*s)} \exp\Big\{-Ns
- \frac{1}{4s}\big[x-\shift(s)-2\gamma_*s\big]^2\Big\}.
\enbal
\end{equation*}
One can then verify that
\begin{equation}\label{20may1430}
\rho^s(t, x) = \frac{B}{\sqrt{4\pi t}} e^{-N \eps_0 s} 
e^{-N(1 - \eps_0)t}e^{\gamma_*(\shift(s) + \gamma_* s)} 
\exp\left[-\frac{(x - \shift(t) - 2\gamma_* s)^2}{4t}\right]
\end{equation}
satisfies both the initial condition and (\ref{eq:rho-s}).
The Duhamel formula (\ref{20may1322}) implies
\begin{equation}\label{eq:mean-mean}
\int_\R \psi(x)\rho(t, x) \d x=\int_0^{t\wedge \earlyt}\int_\R \psi(x)\rho^s(t,x) \d x \ds s
\end{equation}
for all $t\ge0$.

To bound the first term in \eqref{eq:lemmaaboutrho}, it suffices to
show that for all $t\ge0$ and $s\le t\wedge \earlyt$,
\be\label{may1968}
\int_\R \psi(x)\rho^s(t,x)\d x \le C(1+t)^C e^{-(\sqrt N+c) a -ct}
\ee
for some $c>0$, $C>0$.
Then, the integral over $s\le \earlyt\le a$ in
\eqref{eq:mean-mean} gives at most a factor $a$ which can be absorbed,
together with the $(1+t)^C$ factor, into
 in the exponential
decay by making $c$ smaller.

We now show \eqref{may1968}.
It follows from (\ref{eq:adjoint-boundary}) that $\psi(x)\le C e^{\sqrt
N x}$ for some $C$; then, in (\ref{20may1430}), we obtain 
\be\label{20may1410x}
\psi(x)\rho^s(t,x)\le
e^{\sqrt N x}\rho^s(t,x)  \le
\frac{C}{\sqrt{t}} e^{-N(1 - \eps_0)t} 
e^{\gamma_*(\shift(s) + \gamma_* s)}
e^{\sqrt{N}x} \exp\left[-\frac{(x - \shift(t) - 2\gamma_* s)^2}{4t}\right].
\ee
Now,
\begin{equation}\label{20may1414x}
e^{\sqrt{N} x} \exp\left[-\frac{(x - \shift(t) - 2\gamma_* s)^2}{4t}\right] 
=e^{\sqrt{N}(\shift(t) + 2\gamma_* s + \sqrt{N} t)} 
\exp\Big[-\frac{(x - \shift(t) - 2\gamma_* s - 2\sqrt{N} t)^2}{4t}\Big].
\end{equation}
Hence \eqref{20may1410x} yields
\be\label{20may1412x}
\bal
\int_\R \psi(x)\rho^s(t,x)\d x &\le 
C\exp\big\{-N(1 - \eps_0)t+\gamma_*(\shift(s) + \gamma_* s)
+\sqrt{N}\big(\shift(t) + 2\gamma_* s + \sqrt{N} t\big)\big\}\\
&=C\exp\big\{\sqrt{N}\shift(t)+N\eps_0t+\gamma_*\shift(s)+\gamma_*\big(\gamma_*+2\sqrt{N}\big)s\big\}
\\
&\le C(1+t)^C 
\exp\big\{N\eps_0 t -(\sqrt N+\gamma_*)a+\gamma_*\big(\gamma_*+2\sqrt{N}\big)s\big\}.
\enbal
\ee
(In the last line, we replaced $\shift(s)$ and $\shift(t)$ by their expression
\eqref{eq:mu}, and then used $s\le t$ for the logarithmic terms.)
Pick $c>0$, and then $\eps_0$ such that $N\eps_0\le c$. We see that
\eqref{may1968} holds if
\be\label{14jul1789}
ct-\gamma_* a+\gamma_*\big(\gamma_*+
2\sqrt{N}\big)s\le -ca-ct.
\ee
We now consider, till the end of this proof,
the case where \eqref{14jul1789} does not hold, i.e.\@ the case where 
\be\label{not14jul1789}
\gamma_*\big(\gamma_*+
2\sqrt{N}\big)s> (\gamma_*-c)a-2ct.
\ee
 In that
case, the bound $\psi(c)\le Ce^{\sqrt N x}$ that we used to derive
\eqref{20may1412x} is not good enough for $x>0$. We now show that if
$c>0$ is small enough, the
centering term in \eqref{20may1414x} satisfies
\be\label{mu2pos}
\shift(t)+2\gamma_* s+2\sqrt N t > ca.
\ee
Indeed, recall that $\shift(t)>-a$ and $\sqrt N>1$. If $t>a$, then
\eqref{mu2pos} is
obvious. If $t<a$, then \eqref{not14jul1789} and $t>s$ imply that
\begin{equation*}
\bal
\shift(t)+2\gamma_* s+2\sqrt N t 
&\ge -a + 2(\gamma_* +\sqrt N)s
\ge -a + \bigg[\frac{2(\gamma_* +\sqrt N) }{\gamma_*+2\sqrt
N}\bigg]\frac{\gamma_*-3c}{\gamma_*}a
\enbal
\end{equation*}
As the factor in square brackets is strictly larger than 1, it is possible
to choose $c>0$ small enough independent of $a$ such that \eqref{mu2pos}
holds.
Thus when $x<0$, \eqref{not14jul1789} implies
\begin{equation*}
\exp\Big[-\frac{(x - \shift(t) - 2\gamma_* s - 2\sqrt{N} t)^2}{4t}\Big]
\le \exp\Big[-\frac{(\shift(t) + 2\gamma_* s + 2\sqrt{N} t)^2}{4t}+\frac{c
a x}{2t}\Big].
\end{equation*}
Recalling \eqref{20may1430}, \eqref{20may1410x}, and \eqref{20may1414x},
we obtain
\begin{equation*}
e^{\sqrt N x} \rho^s(t,x) \le C \rho^s(t,0) \exp\Big(\frac{ca x}{2t}\Big) \quad \textrm{for } x < 0.
\end{equation*}
Therefore,
\be
\label{ontheleft}
\int_{-\infty}^0\psi(x) \rho^s(t,x) \d x \le C \frac t a \rho^s(t,0).
\ee

We now evaluate the integral over $x>0$, still assuming 
\eqref{not14jul1789}. In this region, we use the bound $\psi(x)\le C(1+x)$.
In the expression \eqref{20may1430} of $\rho^s(t,x)$, we
start by getting rid of the
logarithmic terms that appear in $\shift(t)$  and $\shift(s)$.
We employ \eqref{eq:gaussian mean bound} with some  $\eps$ to be chosen later.
Recalling that $N\eps_0\le c$, \eqref{20may1430} implies:
\be\label{tilderhos}
\rho^s(t,x)\le \tilde \rho^s(t,x)
\quad\text{with\quad} \tilde \rho^s(t,x)=
\frac C{\sqrt t} \exp\Big[2c
t-Nt -\gamma_*(a-\gamma_* s)-
(1-\eps)\frac{(x+a-2\gamma_* s)^2}{4t}\Big].
\ee
The term $2ct$ has two contributions.
On the one hand, $N\eps_0t \le ct$.
On the other, the logarithmic term in $\shift(s)$ is sublinear: $2\gamma_*/(3\lambda_*) \log(1+s) \le c t + C$.

Since $s\le \earlyt\le t_*=a/(2\sqrt N)$ and $\gamma_*\le\sqrt N$, we have
$a-2\gamma_*s\ge0$ and
\begin{equation*}
\tilde \rho^s(t,x)\le
\tilde\rho^s(t,0)\exp\Big[-(1-\eps)\frac{x^2}{4t}\Big]\qquad\text{for
$x>0$}.
\end{equation*}
Then
\be
\int_0^\infty \psi(x)\rho^s(t,x)\d x\le C\tilde\rho^s(t,0)\int_0^\infty(1+x)
\exp\Big[-(1-\eps)\frac{x^2}{4t}\Big]\d x
\le C(\sqrt t+t)\tilde \rho^s(t,0).
\label{ontheright}
\ee

Combining \eqref{ontheleft} and \eqref{ontheright}, we have shown that,
if \eqref{not14jul1789} holds, we have
\begin{equation*}
\int_\R\psi(x)\rho^s(t,x)\d x
\le
C(\sqrt t+t)\tilde\rho^s(t,0).
\end{equation*}

We now evaluate $\tilde\rho^s(t,0)$ and show that if $\epsilon$ is chosen
small enough, then there
exists $C>0$ and $c>0$ such that, for all $t>0$ and all $s\le t\wedge
\earlyt$,
\be\label{goalrhozero}
\tilde\rho^s(t,0)
     \le \frac C{\sqrt t}\exp\Big[   -(\sqrt N+c)a-ct\Big].
\ee
This will imply \eqref{may1968}, and then Lemma~\ref{lemmaaboutrho}
for the term with $\psi$.
We have from \eqref{tilderhos}
\begin{equation*}
\begin{aligned}
\tilde\rho^s(t,0)&= \frac C{\sqrt t} \exp\Big[2c
t-Nt -\gamma_*(a-\gamma_* s)-
(1-\eps)\frac{(a-2\gamma_* s)^2}{4t}\Big]
\\
&= \frac C{\sqrt t}   \exp\Big[2ct-Nt -\gamma_*(a-\gamma_* s)-
(1-\eps)\Big(\frac{a^2}{4t}-\frac{\gamma_*s}{t}(a-\gamma_*s)\Big)
\Big]
\\
&\le  \frac C{\sqrt t}   \exp\Big[2ct-(1-\eps)\Big(N\frac ta
+\frac{a}{4t}\Big)a -\gamma_*(a-\gamma_* s)\Big(1-(1-\eps)\frac st\Big) \Big]
\\&\le  \frac C{\sqrt t}   \exp\Big[2ct-(1-\eps)\theta\Big(\frac t a\Big)a
-\gamma_*(a-\gamma_* s)\Big(1-\frac st\Big) \Big]
\end{aligned}
\end{equation*}
with $\theta(\xi):=N\xi+1/(4\xi)$. Here we have used $a-\gamma_*s>0$.
Notice that $\theta(\xi)$ reaches its
minimum at  $\xi_*=t_*/a=1/(2\sqrt N)$ with
$\theta(\xi_*)=\sqrt N$. Let $t_0=(\earlyt+t_*)/2$. Notice also that $t_0/a$
does not depend on $a$ and that $t_0/a<\xi_*$. We choose $c$ small enough
that $\theta(t_0/a)>\sqrt N +5c$, and then $\eps$ small enough that
$(1-\eps)\theta(t_0/a) > \sqrt N+4c$. Then, for $t\le t_0$, we have
$\theta(t/a)\ge\theta(t_0/a)$ and
\begin{equation*}
\tilde\rho^s(t,0)\le  \frac C{\sqrt t}  \exp\Big[2ct-(1-\eps)\theta(t_0/a)a
\Big]\le \frac C{\sqrt t}\exp\Big[2ct-(\sqrt N+4c)a\Big],
\end{equation*}
where we used $s\le t$ and $s\le \earlyt\le t_*=a/(2\sqrt N)\le a/\gamma_*$.
Since $t\le t_0\le a$, this implies \eqref{goalrhozero}.

We now consider $t\in [t_0,a]$. Using 
$\theta(t/a)\ge\theta(\xi_*)=\sqrt N$ and  $s\le \earlyt=\earlyxi a$,
\begin{equation*}
\tilde\rho^s(t,0)\le  \frac C{\sqrt t}  \exp\Big[2ct-(1-\eps)\sqrt N a 
-a \gamma_*(1-\gamma_* \earlyxi)\Big(1-\frac {\earlyt}{t_0}\Big) \Big].
\end{equation*}
Notice that $\earlyt/t_0$ is independent of $a$ and strictly smaller than
one.
We make $c$ small enough that $\gamma_*(1-\gamma_*\earlyxi)(1-\earlyt/t_0)>5c$
and then $\eps$ small enough that $(1-\eps)\sqrt N\ge \sqrt N -c$.
Then, since $t\le a$, we obtain again the bound \eqref{goalrhozero}.

Finally, we consider $t\ge a$. As $\theta$ is convex, we have for $\xi\ge1$
\begin{equation*}
\theta(\xi)\ge\theta(1)+\theta'(1)(\xi-1)
=\frac12+\Big(N-\frac14\Big)\xi
\ge \sqrt N+\frac14+\Big(N-\sqrt N\Big)\xi
\end{equation*}
and
\begin{equation*}
\tilde\rho^s(t,0)\le \frac C{\sqrt t}\exp\Big[2ct-(1-\eps)\Big(\sqrt
N+\frac14\Big)a -(1-\eps)(N-\sqrt N) t \Big].
\end{equation*}
It is then clear that by taking $c$ and $\eps$ small enough, we have again
\eqref{goalrhozero}.

This concludes the proof that the first term in \eqref{eq:lemmaaboutrho}
is bounded as stated by Lemma~\ref{lemmaaboutrho}. In this proof, we only
used that $\psi(x)\le C \exp(\sqrt N x)$ for $x<0$ and $\psi(x)\le C(1+x)$
for $x\ge0$.
Since $\psi'(x)$ satisfies the same bound and $E$ is bounded,
the other terms in \eqref{eq:lemmaaboutrho} are bounded in the same way as
the first term and the
proof of Lemma~\ref{lemmaaboutrho} is complete.\hfill\qed

\subsection*{Proof of Lemma~\ref{lemmaaboutsigma}}

We now turn to the solution $\sigma$ of \eqref{eq:sigma-ev}:
\begin{equation*}
\pdr{\sigma}{t} = \pdrr{\sigma}{x} - \frac{3}{2\lambda_*(t+1)} \pdr{\sigma}{x}
-N(1 - \eps_0) \one_{(-\infty, -K)}(x) \sigma + N(1 - \eps_0) \one_{[-K, \infty)}(x) \rho, \quad \sigma(0, x) = 0.
\end{equation*}
Let us start by showing that 
\begin{equation*}
\int_\R \psi(x)\sigma(t,x) \d x    = O\left(e^{-(\sqrt{N} + c) a}\right).
\end{equation*}

We again represent $\sigma(t,x)$ via the Duhamel formula: 
\begin{equation}
\label{eq:sigma-Duhamel}
\sigma(t, x) = \int_0^t \sigma^s(t, x) \d s.
\end{equation}
Here, $\sigma^s(t,x)$, defined for $0<s<t$, is the solution to
\begin{equation}\label{eq:sigma-s}
\bal
&\pdr{\sigma^s}{t} = \pdrr{\sigma^s}{x} - \frac{3}{2\lambda_* (t + 1)}\pdr{\sigma^s}{x} - N (1 - \eps_0) \one_{(-\infty, -K)}(x) \sigma^s,~~~ t>s, \\
&\sigma^s(s, x) = N (1 - \eps_0) \one_{[-K, \infty)}(x) \rho(s, x).
\enbal
\end{equation}

Our first step will be to bound the initial condition in \eqref{eq:sigma-s} with the following Lemma.
\begin{lemma}
\label{lem:sigma-s-init}
There exist $C>0$, $c>0$, and $\kappa>0$ such that for all $s\ge 0$ and all $x\in \R$:
\be
\sigma^s(s,x)\le
C \exp\bigg[-(\sqrt{N}+c)a-cs-\kappa\min\left\{1,\frac{a}s\right\}
x_+ -\frac{x^2}{8s}\bigg]
\one_{[-K, \infty)}(x) .
\label{eq:sigma-s-init}
\ee
with $x_+=\max\{x,0\}$.
\end{lemma}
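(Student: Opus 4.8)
The plan is to run everything through the Duhamel formula \eqref{20may1322}, $\rho(s,x)=\int_0^{s\wedge\earlyt}\rho^{s'}(s,x)\,\dn s'$, together with the explicit expression \eqref{20may1430} for $\rho^{s'}$, in which $\shift(t)=-a+\frac{3}{2\lambda_*}\log(t+1)$. Since $\sigma^s(s,\cdot)=N(1-\eps_0)\one_{[-K,\infty)}\,\rho(s,\cdot)$, it suffices to bound $\rho(s,x)$ for $x\ge -K$, the claimed estimate being vacuous for $x<-K$; I would also assume $a$ large, the remaining compact range of $a$ being handled by the same argument together with the standard exponential spatial decay of $\rho$. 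Substituting into \eqref{20may1430}, discarding the logarithm in the Gaussian via \eqref{eq:gaussian mean bound}, and bounding $(s'+1)^{3\gamma_*/(2\lambda_*)}\le C_\delta e^{\delta s'}$, I will obtain
\begin{equation*}
\rho^{s'}(s,x)\le \frac{C}{\sqrt s}\,e^{\mathcal E(s')},\qquad
\mathcal E(s'):=-N(1-\eps_0)s+(\gamma_*^2+\delta-N\eps_0)s'-\gamma_* a-(1-\eps)\frac{(x+a-2\gamma_* s')^2}{4s},
\end{equation*}
where $\eps,\delta$ (and, eventually, $\eps_0$) are small parameters depending only on $N$ and $\earlyxi$, to be fixed at the end.

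Next I would observe that $s'\mapsto\mathcal E(s')$ is a downward parabola whose vertex $s'_\sharp$ satisfies $s'_\sharp\ge\frac{x+a}{2\gamma_*}>\earlyxi a\ge M:=s\wedge\earlyt$ for $a$ large, because $\gamma_*\earlyxi<\tfrac{1}{2}$; hence $\mathcal E$ is increasing on $[0,M]$ and it suffices to bound $\mathcal E(M)$. I would then split into $M=s$ (when $s\le\earlyt$) and $M=\earlyt$ (when $s\ge\earlyt$). In either case, expanding the Gaussian square isolates a term $-(1-\eps)\tfrac{x^2}{4s}\le-\tfrac{x^2}{8s}$ together with an $x$-linear term that is nonpositive for $x\ge0$: its coefficient is $\gamma_*-\tfrac{a}{2s}\le-\sqrt{N-1}$ when $M=s$ (since then $s\le\earlyxi a<\tfrac{a}{2\sqrt N}$) and $-(1-\eps)\tfrac{(1-2\gamma_*\earlyxi)a}{2s}$ when $M=\earlyt$, so in both cases it is $\le-\kappa\min\{1,a/s\}\,x$ because $s\le a$. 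What remains is a block of $s$-and-$a$ terms of the shape $-N(1-\eps_0)s-(\mathrm{const})\tfrac{a^2}{s}+(\mathrm{const})\,a$; the crucial point is that the minimiser of $Ns+(\mathrm{const})\tfrac{a^2}{s}$ lies \emph{outside} the admissible $s$-interval --- it equals $\tfrac{a}{2\sqrt N}>\earlyxi a$ when $M=s$ because $\earlyxi<\xi_*=\tfrac{1}{2\sqrt N}$, and $\tfrac{(1-2\gamma_*\earlyxi)a}{2\sqrt N}\le\earlyxi a=\earlyt$ when $M=\earlyt$ precisely because $\earlyxi\ge\tfrac{1}{2(2\sqrt N-\sqrt{N-1})}$ --- so the restricted minimum is attained at $s=\earlyt$. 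A short computation, in which the contributions of $-\gamma_* a$, the factor $e^{\gamma_*^2 s'}$, and the Gaussian shift $2\gamma_* s'$ collapse together, shows this minimum equals $\theta(\earlyxi)\,a$ with $\theta$ as in \eqref{eq:rate-function}. Since $\earlyxi<\xi_*$, strict convexity gives $\theta(\earlyxi)>\theta(\xi_*)=\sqrt N$, so after choosing $\eps,\delta,\eps_0$ small I get $\mathcal E(M)\le-(\sqrt N+2c)a-2cs-\tfrac{x^2}{8s}-\kappa\min\{1,a/s\}x_+$.

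Finally, integrating over $s'\in[0,M]$ multiplies this by at most $\earlyt=\earlyxi a$, and the resulting prefactor $a/\sqrt s$ is harmless: for $s\gtrsim1$ it is absorbed into $e^{-ca}$ since $\log a\ll a$, while for small $s$ the retained Gaussian $-(1-\eps)\tfrac{(x+a)^2}{4s}\le-\tfrac{c a^2}{s}$ (using $x\ge-K$) gives super-exponential decay absorbing both $a/\sqrt s$ and $e^{(\sqrt N+c)a}$. Multiplying by $N(1-\eps_0)$ and relabelling constants yields \eqref{eq:sigma-s-init}, with $C$, $c$, $\kappa$ depending only on $N$, $\earlyxi$, $K$. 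The main obstacle I anticipate is exactly this two-case estimate of $\mathcal E(M)$: one must arrange the algebra so that, after accounting for the drift-induced translation and for $-\gamma_* a$, the surviving $s$-and-$a$ terms reduce \emph{exactly} to $\theta(\earlyxi)\,a$, and verify that \emph{both} inequalities defining $\earlyxi$ are what force the restricted minimiser to sit at the endpoint $\earlyt$.
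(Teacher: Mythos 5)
Your proof is correct, and the core ingredients match the paper's: the Duhamel formula \eqref{20may1322}, the explicit form \eqref{20may1430}, the discard of logarithms via \eqref{eq:gaussian mean bound}, and---crucially---the observation that both inequalities defining $\earlyxi$ push the restricted minimiser of the $s$-block to the endpoint $\earlyt$, where it collapses to $\theta(\earlyxi)a > \sqrt N a$. You organize the argument differently in two respects. First, you notice that $\mathcal E(s')$ is a downward parabola whose vertex lies beyond $M=s\wedge\earlyt$ (because $\gamma_*\earlyxi<\tfrac12$), so it suffices to evaluate the integrand at the endpoint $s'=M$; the paper instead proves the uniform bound \eqref{goalrhox} over the whole range $s'\le t\wedge\earlyt$ and then integrates. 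Second, and more substantively, you split on $M=s$ versus $M=\earlyt$ and perform a single unified minimisation of the remaining block; the paper splits on $t\le\delta a$ versus $t\ge\delta a$, handling the first via the elementary inequality \eqref{elemquad} and the second by factoring out $\tilde\rho^s(t,0)$ and appealing to \eqref{goalrhozero} from Lemma~\ref{lemmaaboutrho}. Your route is more self-contained and makes the mechanism that selects $\earlyxi$ transparent, at the cost of recapitulating some of the $s$-minimisation already done for \eqref{goalrhozero}; the paper's route reuses that prior bound and confines the explicit Gaussian manipulation to the small-$t$ regime. Two small cautions if you write this out: \textbf{(i)} your justification ``because $s\le a$'' for the bound $\le -\kappa\min\{1,a/s\}x_+$ is misleading in the case $M=\earlyt$, where $s$ may exceed $a$---there the coefficient is $-(1-\eps)(1-2\gamma_*\earlyxi)\tfrac a{2s}$, which is precisely of order $a/s=\min\{1,a/s\}$; and \textbf{(ii)} you should make explicit the peel-off $(N-c)s + \tfrac{a^2}{4s} \geq (\sqrt N + c)a$ for $s\le\earlyt$ (and its analogue for $s\ge\earlyt$), since the endpoint value $\theta(\earlyxi)a$ alone does not directly yield the simultaneous $-cs$ and $-ca$ decays; this works because the gap $\theta(\earlyxi)-\sqrt N$ is strict, but it deserves a line. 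Also note that for $x\in[-K,0)$ the extracted linear term $-\kappa\min\{1,a/s\}x$ is \emph{positive}; you must bound it by $\kappa K$ and absorb it into $C$, exactly as the paper's parenthetical remark does.
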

\begin{proof}
Recall from \eqref{eq:sigma-s} that $\sigma^s(s,x)=C\one_{[-K,
\infty)}(x)\rho(s,x)$ with $\rho(t,x)$
given by (\ref{20may1322}):
\begin{equation*}
\rho(t,x)=\int_0^{t\wedge \earlyt} \rho^{s}(t,x) \d s.
\end{equation*}
We use $\rho^{s}(t,x)\le \tilde\rho^s(t,x)$ with $\tilde\rho^s(t,x)$ given in
\eqref{tilderhos}.
It suffices to show that there exists $\kappa>0$ such that,
for all $s\le t\wedge\earlyt$,
\be\label{goalrhox}
\tilde\rho^s(t,x)\le \frac C{\sqrt t}\exp\bigg[-(\sqrt N+c)a-ct-\kappa
\min\left\{(1,\frac{a}t\right\}
x_+-\frac{x^2}{8t}\bigg]\quad\text{for $x>-K$}
\ee
Then $\rho(t,x)\le C\sqrt t \exp[\ldots]$, and the $\sqrt t$ can be absorbed
in the exponential by making $c$ smaller. We now show \eqref{goalrhox}.

We
first assume that $t\le\delta a$, for some small $\delta>0$ to be chosen.
For such small times, the Gaussian term in \eqref{tilderhos} controls
everything. We use the following result: for any $\eps\in[0,1/2)$, any
$\kappa\ge 0$, any $\lambda\ge 0$ and any $K\ge 0$, there exists $\delta'>0$ such
that, if $b$ is large enough,
\be\label{elemquad}
(1-\eps)\frac{(x+b)^2}{4t}\ge\frac{x^2}{8t}+\kappa x +\lambda b\qquad\text{for all
$t\le\delta' b$ and $x\ge-K$}.
\ee
Indeed, taking $\eps=0$ for simplicity, \eqref{elemquad} is equivalent to $x^2+2\beta x +\gamma\ge0$ with
$\beta=2b-4\kappa t$ and $\gamma=2b^2-8\lambda b t$. By making $\delta'$
small enough, we have $\beta\in[1.9b,2b]$ and $\gamma\in[1.9b^2,2b^2]$ for
$t\le \delta' b$.
Then, \eqref{elemquad} holds
for $x\ge-\beta+\sqrt{\beta^2-\gamma}$, where
$-\beta+\sqrt{\beta^2-\gamma}<(-1.9+\sqrt{4-1.9})b\approx-0.45 b$, which is
smaller than $-K$ for $b$ large enough.

Then, in \eqref{tilderhos}, we notice that $b:=a-2\gamma_*s\ge a-2\gamma_*
\earlyt=a(1-2\gamma_*\earlyxi)$, with $1-2\gamma_*\earlyxi>0$.
We use \eqref{elemquad} with this $b$ and $\lambda=(\sqrt N+c)/(1-2\gamma_*\earlyxi)$
to obtain \eqref{goalrhox} for $t\le\delta'b\le \delta a$ with
$\delta=\delta'(1-2\gamma_*\earlyxi)$.
(We wrote $2ct-Nt\le -ct$  in \eqref{tilderhos} after assuming
$c<N/3$, and we used $\kappa x\ge\kappa x_+-\kappa K$ for $x\ge-K$.)

We now turn to $t\ge\delta a$. From \eqref{tilderhos} and
\eqref{goalrhozero}, we have
\be\bal
\tilde\rho^s(t,x)&=\tilde\rho^s(t,0)\exp\bigg[-(1-\eps)\frac{x^2}{4t}-(1-\eps)\frac{a-2\gamma_*s}{2t}x\bigg]
\\&\le \frac C{\sqrt t}\exp\bigg[   -(\sqrt N+c)a-ct-\frac{x^2}{8t}-(1-\eps)\frac{a- 2\gamma_*s}{2t}x\bigg]
\enbal
\label{rhos_inter}
\ee
Pick $\kappa=(1-\eps)(1-2\gamma_*\earlyxi)/2$. Since $s\in[0,\earlyxi a]$ and
$t\ge\delta a$, we have
\begin{equation*}
  (1 - \eps) \frac{a - 2 \gamma_* s}{2 t} \in \left[\frac{\kappa a}{t},
\frac{1}{2 \delta}\right]. 
\end{equation*}
This
implies that
\begin{equation*}
(1-\eps)\frac{a-2\gamma_*s}{2t}x
\ge \frac{\kappa a}{t}x_+-\frac{K}{2\delta}
\ge \kappa \min\left\{1, \frac{a}{t}\right\} x_+-\frac{K}{2\delta}\text{\qquad for all }x\ge -K.
\end{equation*}
Using this bound in \eqref{rhos_inter} concludes the proof of \eqref{goalrhox} and of
Lemma~\ref{lem:sigma-s-init}.
\end{proof}

The solution to \eqref{eq:sigma-s} with the initial
condition~\eqref{eq:sigma-s-init} can be treated by Lemma~\ref{lem:super},
with $\kappa_-= 0$ and $\kappa_+=\kappa\min\{1,a/s\}.$
For instance,
\begin{equation*}
  \int_\R\psi(x)\sigma^s(t,x)\,\d x\le Ce^{-(\sqrt N+c)a-cs} \max\Big\{\Big(\frac sa\Big)^2,1\Big\}.
\end{equation*}
Since $a \geq 1$, we have $\max\big\{(s/a)^2,1\big\} \leq C(\eps) e^{\eps s}$ for any $\eps > 0$.
Thus, we can absorb $\max\big\{(s/a)^2,1\big\}$ into the exponential factor $e^{-cs}$ by reducing $c$.
After this operation, Lemma~\ref{lem:super} yields:
\be
\label{eq:sigma-s-integrals}
\begin{aligned}
&\int_\R\psi(x)\sigma^s(t,x)\,\d x\le Ce^{-(\sqrt N+c)a-cs},
\\
&\int_\R\psi'(x)\sigma^s(t,x)\,\d x\le C e^{-(\sqrt N+c)a-cs} (t-s+1)^{-1/2},
\\
&\int_\R|E(t,x)|
\psi(x)\sigma^s(t,x)\,\d x\le Ce^{-(\sqrt N+c)a-cs} (t+1)^{-1/2}(t-s+1)^{-3/2}.
\end{aligned}
\ee
In light of the Duhamel formula~(\ref{eq:sigma-Duhamel}), we must integrate \eqref{eq:sigma-s-integrals} over $s \in [0, t]$.
Taking $\al \in \{1/2, 3/2\}$ in \eqref{eq:poly-exp-integral}, \eqref{eq:sigma-Duhamel} and \eqref{eq:sigma-s-integrals} imply Lemma~\ref{lemmaaboutsigma}.\hfill\qed

\appendix
\section{The proof of Lemma~\ref{lem:super}}\label{sec:appendix}

To prove Lemma~\ref{lem:super}, we argue that the absorption on
$(-\infty,-K)$ in \eqref{eq:super} acts similarly to the Dirichlet 
boundary condition.

We begin by constructing a family of supersolutions based on the Dirichlet problem.
Recall that we defined
\begin{equation*}
  \varphi(\lambda, x) = \frac{x}{\lambda^{3 /2}} \exp\left(-\frac{x^2}{4\lambda}\right).
\end{equation*}
Now let $v(t,x;s)$, with $s\ge 0$,  be the solution to
\begin{equation}\label{eq:Dirichlet}
\bal
&    \pdr{v}{t} = \pdrr{v}{x} - \frac{3}{2(t + s + 1)} \pdr{v}{x},~~~~ \textrm{for $t>0$ and $x>0$, } \\
&v(t, 0) = 0,~~~~~~~~~~~~~~~~~~~~~~~~~~~~\textrm{for $t>0$,}  \\
 &v(0,x) = \varphi(\lambda, x) ~~~~~~~~~~~~~~~~~~~~~ \textrm{for $x>0$. }
\enbal
\end{equation}

We define the ``$\lambda$-adapted'' self-similar variables
\begin{equation*}
  \tau = \log \left(\frac{t + \lambda}{t}\right) \And \eta = \frac{x}{\sqrt{t + \lambda}}.
\end{equation*}
In these coordinates, $v$ satisfies
\begin{equation*}
  \pdr{v}{\tau} = \pdrr{v}{\eta} + \frac{\eta}{2} \pdr{v}{\eta} - m(\tau) \pdr{v}{\eta}, \quad v(0, \eta) = \varphi(1, \eta),
\end{equation*}
with a drift  
\begin{equation*}
 m(\tau) = \frac{3}{2} \frac{\sqrt{\lambda}e^{\tau/2}}{\lambda(e^\tau - 1) + s + 1}.
\end{equation*}
We wish to argue that the drift term $m \partial_\eta v$ is negligible, so that $\varphi(\lambda + t, x)$ is an approximate solution to~\eqref{eq:Dirichlet}.
To this end, note that if $s\ge c_0\lambda$ with $c_0>0$ fixed, we have
\begin{equation*}
  m(\tau) \leq Cs^{-1/2} \ll 1.
\end{equation*}
Combining Lemma~2.2 in \cite{HNRR} with methods from \cite{NRR1}, we have the following analogue of Lemma~\ref{lem:Dirichlet-lower}. 
\begin{lemma}\label{lem:Dirichlet}
Fix $c_0 > 0$ and suppose that $s \geq c_0 \lambda$.
There exists $C(c_0) > 0$ such that if $s \geq C$, then the solution $v(t,x;s)$ to (\ref{eq:Dirichlet}) has the form
\begin{equation*}
  v(t, x) = \varphi(t + \lambda, x) + (t + \lambda)^{-3/ 2} v_0(t,x;s),
\end{equation*}
with 
\begin{equation*}
\|\ubar v_0(s)\|_{\m C_w^1}+\|\ubar v_0(s)\|_{L_w^2}\le Cs^{-1/2}
\end{equation*}
for the weight $w=\exp\left[ {x^2}/{(6(t - s + 1))}\right]$.
\end{lemma}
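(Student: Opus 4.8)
The plan is to work in the self-similar variables $\tau$ and $\eta$ introduced above, in which \eqref{eq:Dirichlet} becomes a small, time-dependent perturbation of an exactly solvable equation. The first observation is that, with the drift dropped, $\varphi(t+\lambda,x)$ is \emph{exactly} the solution of \eqref{eq:Dirichlet}: one checks directly that $\partial_t\varphi(t+\lambda,x)=\partial_x^2\varphi(t+\lambda,x)$, $\varphi(t+\lambda,0)=0$, and $\varphi(\lambda,\cdot)$ is the prescribed datum. Equivalently, in self-similar variables $\varphi(t+\lambda,x)$ corresponds to $e^{-\tau}\Phi(\eta)$, where $\Phi(\eta)=\varphi(1,\eta)=\eta e^{-\eta^2/4}$ is the principal eigenfunction, with eigenvalue $-1$, of the operator $\partial_\eta^2+\tfrac{\eta}{2}\partial_\eta$ on $\R_+$ with a Dirichlet condition at $0$; this operator has a spectral gap, the next eigenvalue being $-2$. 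So the entire content of the lemma is to bound the effect of the drift term $-m(\tau)\partial_\eta v$. As already recorded, the hypotheses $s\ge c_0\lambda$ and $s\ge C$ give $\sup_\tau m(\tau)\le C(c_0)s^{-1/2}$; a similar elementary computation gives $\int_0^\infty m(\tau)\,d\tau\le C(c_0)s^{-1/2}$.

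I would then set $v(t,x;s)=\varphi(t+\lambda,x)+(t+\lambda)^{-3/2}v_0(t,x;s)$ and derive the equation for $v_0$: in self-similar variables it is the Dirichlet equation for $\partial_\eta^2+\tfrac{\eta}{2}\partial_\eta$, perturbed by the same drift, with zero initial data and forced by a term proportional to $m(\tau)\partial_\eta(e^{-\tau}\Phi)$, which is Gaussian-localized on the diffusive scale and of size $O(s^{-1/2})$ in the weighted norms of the statement. Decomposing the self-similar solution along $\Phi$ and its $L^2_w$-orthogonal complement, the $\Phi$-component is only shifted by a relative factor $1+O(s^{-1/2})$ — harmless because $\int_0^\infty m(\tau)\,d\tau=O(s^{-1/2})$, and absorbed into $v_0$ — while the orthogonal component is controlled by the spectral gap together with Duhamel: since the forcing is $O(s^{-1/2})$ and the semigroup contracts (with a gain of a half-derivative in the weighted $\m C_w^1$ norm), one gets $\|v_0(s)\|_{\m C_w^1}+\|v_0(s)\|_{L^2_w}\le Cs^{-1/2}$ uniformly in $\tau$, after a short bootstrap using the a priori estimate $v\approx\varphi(t+\lambda,\cdot)$. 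The weighted-space Duhamel estimates needed here, and the treatment of a drift of the form $\tfrac{c}{t+\cdot}\,\partial_x$, are exactly those developed in \cite{NRR1} (they are the mechanism by which the Bramson logarithmic correction is handled there); the spectral structure of the Dirichlet operator at the diffusive scale is Lemma~2.2 in \cite{HNRR}. As this is the same argument that proves the drift-free analogue Lemma~\ref{lem:Dirichlet-lower} (where the perturbation is an exponentially decaying potential rather than a drift), I would present it as a short adaptation rather than write it out in full.

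The main obstacle is the bookkeeping in weighted norms: one must propagate the $O(s^{-1/2})$ smallness of the drift through the Duhamel loop in $L^2_w$ and $\m C_w^1$, not merely in $L^\infty$, and keep the constant uniform in $\lambda$. The hypothesis $s\ge c_0\lambda$ is precisely what is needed for this, since it makes both $\sup_\tau m(\tau)$ and $\int_0^\infty m(\tau)\,d\tau$ of order $s^{-1/2}$ regardless of how $\lambda$ compares to $s$; without it the drift need not be small at small $\tau$. A secondary point is that the $\m C_w^1$ bound requires the standard parabolic gain of a half-derivative off the $L^2_w$ bound, which is again where the \cite{NRR1} machinery does the work.
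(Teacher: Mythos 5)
The paper does not actually prove Lemma~\ref{lem:Dirichlet}; it asserts it with the remark that it follows from Lemma~2.2 of \cite{HNRR} together with the methods of \cite{NRR1}. Your sketch reconstructs exactly that route: pass to the $\lambda$-adapted self-similar variables, observe that $\varphi(t+\lambda,x)$ is the \emph{exact} drift-free Dirichlet solution (corresponding to the principal eigenfunction $\Phi(\eta)=\eta e^{-\eta^2/4}$, eigenvalue $-1$, with spectral gap $1$ for $\partial_\eta^2+\tfrac\eta2\partial_\eta$ on $\R_+$), treat the drift $-m(\tau)\partial_\eta$ as a perturbation of size $O(s^{-1/2})$ both in $\sup_\tau$ and in $\int_0^\infty\!\ d\tau$ thanks to $s\ge c_0\lambda$, and close a Duhamel estimate in the weighted $L^2_w$ and $\m C^1_w$ norms using the spectral gap. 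That is the mechanism in \cite{HNRR,NRR1}, and your accounting of the zero mode (a harmless $1+O(s^{-1/2})$ multiple of $\Phi$, since $\Phi'$ is not $L^2_w$-orthogonal to $\Phi$, absorbed into $v_0$) is the correct way to separate the secular piece from the gap-controlled remainder. So your proposal matches the paper's intended argument.

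Two small remarks. First, calling Lemma~\ref{lem:Dirichlet-lower} the ``drift-free analogue'' is not quite accurate: that lemma also carries a $\tfrac{3}{2\lambda_*(t+1)}\partial_x$ drift; the genuine difference there is the exponentially localized absorbing potential, whereas here it is the $s$-shifted drift $\tfrac{3}{2(t+s+1)}\partial_x$ which makes $m$ uniformly small from the start. Second, you should be a little careful with the role of $c_0$: for $c_0\le1$ one can have $\lambda$ comparable to or larger than $s$, and the clean bound $\sup_\tau m\le Cs^{-1/2}$ comes from evaluating $m$ at its interior maximum, where $m_{\max}\sim(s+1-\lambda)^{-1/2}$; this is where the hypothesis $s\ge C(c_0)$ (with $C$ possibly large) is actually needed, and it is worth flagging if you write the argument out.
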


This lemma also permits the construction of supersolutions for more complicated equations.
If we have an absorbing potential on the left rather than a Dirichlet condition, we can join $v$ to a decaying exponential.
We illustrate the construction with a simple example.
Consider the equation
\begin{equation}
  \label{eq:potential-example}
  \pdr{\nu}{t} = \pdrr{\nu}{x} - \frac{3}{2(t + 1 + s)} \pdr{\nu}{x} - \one(x<2) \nu.
\end{equation}
It is easy to check that 
\begin{equation*}
  A_\lambda(t,x)= \frac 1 2 (t+\lambda)^{-3/ 2}\exp\left(\frac{x}{2}\right)
\end{equation*}
is a supersolution to (\ref{eq:potential-example}) on $(-\infty, 2]$, provided $\lambda \geq 2$.
We can glue $A_\lambda(t,x)$ on the left to the solution to the Dirichlet problem (\ref{eq:Dirichlet}) on the right, with no
potential, to form a  global super-solution to~(\ref{eq:potential-example}).
For the hybrid to be a supersolution itself, we need its slope to decrease at the joint, which
ensures that in a neighborhood of the joint, it is the minimum of two supersolutions, and thus a supersolution.
We must thus take some care to achieve this.

To emphasize the dependence on $\lambda$, let $v_\lambda$ denote the solution to \eqref{eq:Dirichlet}.
If $\lambda \geq 2$, the graphs of~$A_\lambda(0,x)$ and~$\varphi_\lambda(x)$
intersect twice in $[0, 2]$.
By Lemma~\ref{lem:Dirichlet}, the same is true of $A_\lambda(t,x)$ and~$v_\lambda(t,x;s)$ for $s$ sufficiently large.
Therefore, 
the hybrid function
\begin{equation*}
  \Psi(t, x) =
  \begin{cases} A_\lambda(t,x) 
    & \textrm{if } x \leq j(t)\\
    v_\lambda(t, x; s) & \textrm{if } x > j(t),
  \end{cases}
\end{equation*}
where $j(t)$ is the rightmost point of intersection, is a supersolution to \eqref{eq:potential-example}.

Now, consider $w$ as in Lemma~\ref{lem:super} with $s \geq C$ fixed.
Let $\bar{w}$ denote the solution to \eqref{eq:super} and \eqref{eq:super-initial} with inequalities replaced by equalities.
Then, by the comparison principle, we have $w \leq \bar{w}$, and it suffices to control $\bar{w}$.
Next shift $\bar{w}$ so that it starts at time $0.$
To bound $\bar{w}$ from above, we will piece together many supersolutions of a hybrid form.

In the following, we assume that~$\lambda$ and $s$ satisfy the hypotheses of Lemma~\ref{lem:Dirichlet} and $s \geq C' \geq C$ for some $C'$ to be determined.
We say a function is hybrid if it is a spatial shift and constant multiple of the following form:
\begin{equation*}
\Phi(t, x) =
  \begin{cases}
    B (t + \lambda)^{-3 /2} e^{-b x} & \textrm{for } x \leq j(t),\\
    v_\lambda(t, x; s) & \textrm{for } x > j(t).
  \end{cases}
\end{equation*}
Here $B, C' > 0$, $b < \al$, and $j \colon [0, \infty) \to \R_+$ (uniformly bounded) are chosen so that $\Phi$ is continuous and $\partial_x\Phi$ decreases at the joint $x = j(t)$.
If $\lambda$ is sufficiently large (depending on $b$), $\Phi$ is a supersolution to \eqref{eq:super}.
Let $\m H$ denote the set of hybrids.

Notice that
\begin{equation*}
  \int_{0}^\infty \varphi(\lambda, x) \d x \sim \lambda^{-\frac 1 2} \And \int_{\R_+} x \varphi(\lambda, x) \d x \sim 1.
\end{equation*}
If $\Phi$ is a hybrid with parameter $\lambda$, Lemma~\ref{lem:Dirichlet} ensures that it resembles $\varphi(t + \lambda, x)$ on $\R_+$,
and it is straightforward to check that
\begin{equation}
  \label{eq:hybrid-ints}
  \begin{aligned}
    &  \int_\R \psi(x) \Phi (t,x) \d x\leq C,\\
    &  \int_\R \psi' (x)\Phi (t,x) \d x\leq C \lambda^{-1 /2},\\
    & \int_\R E(t,x) \psi(x) \Phi (t,x) \d x \leq C (t+1)^{-1/ 2} (t + \lambda)^{-3/ 2}.
  \end{aligned}
\end{equation}

To control the evolution of $\bar{w}$, we use a sum of hybrid supersolutions to cover different spatial regions of the initial condition $\bar{w}(0, x)$.
On $\R_-$, we choose $\Phi_- \in \m H$ of the form 
\begin{equation*}
  \Phi_-(t,x)= B (t + \lambda)^{-3/ 2} e^{-b x}
\end{equation*}
on the left, with some $b \in (\kappa_-, \al)$ and $\lambda \geq 1$.
On the right, $\Phi_-$ has the form $D v_{\lambda}$ for some $D > 0$.
Taking $B, D \gg 1$, we can ensure that $\bar{w}(0, x) \leq H_-(0,x)$ for $x\le 1$. 

We are thus left with the decaying tail of $\bar{w}(0,x)$ for $x\ge 1$. 
We cover this tail by a sequence of ever-wider Gaussians.
Noting that $\|\varphi(\lambda, \anon)\|_\infty \sim \lambda^{-1}$, we use the normalized functions $\lambda \varphi(\lambda, x)$ as building blocks.
In particular, there exists universal $D_+ > 0$ such that
\begin{equation}
  \label{eq:sum-init}
  \bar{w}(0, x) \leq \sum_{k = 0}^{\log_2 s + 1} D_+ \exp\left(-\kappa_+ 2^k \right) 2^k \varphi(2^k, x) \ForAll x \geq 1.
\end{equation}
Note that we are able to truncate the sum at $k = \log_2 s + 1$ because $\bar{w}(0, x)$ is bounded by the Gaussian~$\exp\{-{x^2}/{(8s)}\}$.
Using \eqref{eq:sum-init}, we construct hybrid supersolutions $\Phi_k \in \m H$ equal to a shifted multiple of $v_{2^k}(t, x; s)$ on the right.
Note that the condition $k \leq \log_2 s + 1$ implies $\lambda = 2^k \leq 2s$.
We thus satisfy the hypotheses of Lemma~\ref{lem:Dirichlet}.

By the comparison principle, we have
\begin{equation*}
  \bar{w} \leq \Phi_- + \sum_{k=0}^{\log_2 s + 1} \Phi_k.
\end{equation*}
The sum of the coefficients in \eqref{eq:sum-init} is bounded by $C \kappa_+^{-2}$.
Therefore, \eqref{eq:hybrid-ints} implies
\begin{align*}
&  \int_\R \psi (x)\bar{w}(t,x) \d x \leq C \max\{\kappa_+^2, 1\},\\
 & \int_\R \psi' (x)\bar{w}(t,x) \d x \leq C \max\{\kappa_+^2, 1\} (t  + 1)^{-1/ 2},\\
 & \int_\R e^{-\delta \abs{x}} \psi(x) \bar{w}(t,x) \d x \leq C \max\{\kappa_+^2, 1\} (t + 1)^{-3/ 2}.
\end{align*}
Now, \eqref{eq:psi-super}--\eqref{eq:wave-super} follow, after recalling  the time change $w(t, x) \leq \bar{w}(t - s, x)$ due to our temporal shift of $\bar{w}$.

We have yet to handle $s \leq C'$.
We first note that $\Phi_k$ only depends on $\kappa_+$ through a scalar multiple.
Hence $C'$ is independent of $\kappa_+$.
When $s \leq C'$, we can simply forget about the absorbing potential in \eqref{eq:super}, and allow our solution to evolve under a (drifting) heat equation until time $C'$.
This will certainly be a supersolution for \eqref{eq:super}.
Since the solution at time $C'$ will still be bounded by a Gaussian, we can run our argument above once we reach time $C'$.
This completes the proof of Lemma~\ref{lem:super}.\hfill\qed

\section{The differentiability of the Bramson shift}
\label{sec:s-differentiable}

In this appendix, we justify \eqref{eq:z-limit-formal}.
\begin{lemma}
  \label{lem:s-differentiable}
  The shift $s(y, a)$ is differentiable with respect to $y$.
  Moreover, for all $a > 0$,
  \begin{equation}
    \label{eq:commute-lim-deriv}
    \lim_{t \to \infty} z\big(t, x + m(t); a\big) = -\partial_y s(0, a) U'(x - \bar{x}_0)
  \end{equation}
  uniformly in $x \in \R$.
\end{lemma}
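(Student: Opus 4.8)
The plan is to combine the fundamental theorem of calculus in $y$ with the long-time asymptotics of $z$ from Proposition~\ref{prop:z-limit}, and then use the strict monotonicity of the traveling wave $U$ to upgrade a Lipschitz bound on $s$ into genuine differentiability. First I would record the soft inputs. For each $y \le 0$ the datum $u(0,\anon;y,a)$ in \eqref{eq:initial-y} is bounded, nonnegative and compactly supported on the right, so $s(y,a) = s[u(0,\anon;y,a)]$ is well defined, and $s(0,a) = \bar x_0$ by \eqref{eq:y=0} and \eqref{20mar2512}. Since $u(0,\anon;y,a)$ is nondecreasing in $y$ (the removed interval $(y-a,-a]$ shrinks as $y \nearrow 0$), the comparison principle makes $u(t,\anon;y,a)$, and hence $s(y,a)$, nondecreasing in $y$ for fixed $a$; in particular $s(\anon,a)$ is differentiable a.e.\ on $(-\infty,0]$. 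Standard parabolic estimates give $u \in C^1$ in $y$ with $\partial_y u = z$, so for $y < 0$,
\begin{equation*}
  H(t,x) - u(t,x;y,a) = \int_y^0 z(t,x;y',a) \d y'.
\end{equation*}

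Next I would invoke the analogue of Proposition~\ref{prop:z-limit} for general $y'$: since $z(t,\anon;y',a)$ solves the linearization of \eqref{20mar1914} about $u(t,\anon;y',a)$, which converges in the $m$-frame to $U(\anon - s(y',a))$, the arguments of \cite{HNRR,NRR1} yield a coefficient $\coeff(y',a) > 0$ with
\begin{equation*}
  z(t,x + m(t);y',a) \to -\coeff(y',a)\, U'(x - s(y',a)) \quad \trm{as } t \to \infty
\end{equation*}
uniformly in $x$, where $\coeff(0,a)$ is the coefficient $\coeff(a)$ of Proposition~\ref{prop:z-limit}. The same methods provide bounds on $z(t,x+m(t);y',a)$ that are uniform in $t \ge 0$ and in $y'$ on a left-neighborhood of $0$, and show that $y' \mapsto s(y',a)$ and $y' \mapsto \coeff(y',a)$ are continuous. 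Using the uniform bounds to apply dominated convergence in $y'$ to the displayed identity, then shifting by $m(t)$ and letting $t \to \infty$, I obtain
\begin{equation*}
  U(x - \bar x_0) - U(x - s(y,a)) = -\int_y^0 \coeff(y',a)\, U'(x - s(y',a)) \d y'.
\end{equation*}

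Finally I would divide by $-y > 0$ and let $y \nearrow 0$. By continuity of $\coeff(\anon,a)$ and $s(\anon,a)$ at $0$, the right-hand side tends to $-\coeff(a)\, U'(x - \bar x_0)$, so for every $x$,
\begin{equation*}
  \frac{U(x - \bar x_0) - U(x - s(y,a))}{-y} \to -\coeff(a)\, U'(x - \bar x_0) \quad \trm{as } y \nearrow 0.
\end{equation*}
Fix any $x_0$. Fisher--KPP traveling waves are strictly monotone, so $U'(x_0 - \bar x_0) \ne 0$; writing $\Theta(\xi) = U(x_0 - \xi)$, a $C^1$ diffeomorphism near $\bar x_0$, the above reads $\Theta(s(y,a)) = \Theta(\bar x_0) + \coeff(a)\,\Theta'(\bar x_0)\, y + o(y)$, hence $s(y,a) = \bar x_0 + \coeff(a)\, y + o(y)$. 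Thus $s(\anon,a)$ is differentiable at $0$ with $\partial_y s(0,a) = \coeff(a)$, and the same computation centered at any $y_0 < 0$ gives differentiability there with $\partial_y s(y_0,a) = \coeff(y_0,a)$. Combining $\partial_y s(0,a) = \coeff(a)$ with Proposition~\ref{prop:z-limit} gives $\lim_{t\to\infty} z(t,x + m(t);a) = -\coeff(a)\, U'(x - \bar x_0) = -\partial_y s(0,a)\, U'(x - \bar x_0)$ uniformly in $x$, which is \eqref{eq:commute-lim-deriv}.

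The main obstacle is the technical input imported from \cite{HNRR,NRR1}: producing the estimates on $z(t,x+m(t);y',a)$ that are uniform in $t$ and in $y'$ near $0$, since these are what legitimize exchanging $\lim_{t\to\infty}$ with the $y'$-integral (equivalently, with $\lim_{y \nearrow 0}$). A secondary point is the step from the Lipschitz bound on $s$ — immediate from monotonicity together with the uniform $z$-bounds — to genuine differentiability, which is precisely where the strict monotonicity $U' < 0$ of the wave enters, allowing $U'(x_0 - \bar x_0)$ to be divided out.
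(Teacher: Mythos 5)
Your proposal takes essentially the same route as the paper's Appendix~\ref{sec:s-differentiable} proof: both rest on the $y$-dependent analogue of Proposition~\ref{prop:z-limit} together with a uniformity-in-$y$ bound for the long-time convergence of $z$ (which both you and the paper defer to the machinery of \cite{HNRR,NRR1,NRR2}), and both pass from differentiability of $y \mapsto U(x - s(y,a))$ to differentiability of $s$ by using $U' \ne 0$. The only presentational difference is that you unpack the standard interchange-of-limit-and-derivative lemma by hand via the fundamental theorem of calculus and dominated convergence, restricting to $y \le 0$ (yielding the one-sided derivative that the application actually needs), whereas the paper invokes the real-analysis lemma directly after extending the initial data to a two-sided neighbourhood of $y = 0$.
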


\begin{proof}
  We fix $a > 0$ throughout the proof, and allow constants to implicitly depend on $a$.
  To establish differentiability, we prefer to work with a family of initial data defined on an open interval around $y = 0$.
  We therefore define
  \begin{equation}
    \label{eq:initial-y-full}
    u(0, x; y, a) = \one_{(-\infty, 0]}(x) + \one_{(-\infty, y-a]}(x) - \one_{(-\infty, -a]}(x) \quad \trm{for } y \in \R.
  \end{equation}
  This coincides with the definition in \eqref{eq:initial-y} when $y < 0$, and satisfies
  \begin{equation}
    \label{eq:y-deriv}
    \partial_y u(0, x; y, a) = \delta(x - y + a)
  \end{equation}
  in the distributional sense for all $y \in \R$.
  We prove Lemma~\ref{lem:s-differentiable} at $y = 0$; the general case is similar.
  We are thus free to restrict our attention to $y \in (-1, 1)$.

  Again, we allow \eqref{eq:initial-y-full} to evolve under the Fisher--KPP equation \eqref{20mar1914} and obtain a solution $u(t, x; y, a)$.
  We note that $u(0, x; y, a)$ exceeds $1$ on a compact interval when $y > 0$, so we must extend the reaction $f$ beyond the interval $[0, 1]$ to define the evolution.
  As mentioned earlier, standard parabolic estimates show that $z(t, x; y, a) = \partial_y u(t, x; y, a)$ exists and solves
  \begin{equation}
    \label{eq:z-full}
    \partial_t z(t, x; y, a) = \partial_x^2 z(t, x; y, a) + f'\big(u(t, x; y, a)\big) z(t, x; y, a), \quad z(0, x; y, a) = \delta(x - y + a).
  \end{equation}
  Also, there exists $s(y, a) \in \R$ such that
  \begin{equation}
    \label{eq:u-y-limit}
    u\big(t, x + m(t); y, a\big) \to U\big(x - s(y, a)\big) \quad \trm{as } t \to \infty
  \end{equation}
  uniformly in $x \in \R$.
  When $y=0$, Proposition~\ref{prop:z-limit} shows that $z$ converges to a multiple of $U_0'$ in the $m$-moving frame as $t \to \infty$.
  The argument doesn't rely on the particular value of $y$, so in fact there exists $M(y, a) > 0$ such that
  \begin{equation}
    \label{eq:z-y-limit}
    z\big(t, x + m(t); y, a\big) = \partial_yu\big(t,x+m(t);y,a\big) \to -M(y, a) U'\big(x - s(y, a)\big) \quad \trm{as } t \to \infty
  \end{equation}
  uniformly in $x \in \R$.

  For the moment, let us fix $x$ and view $u\big(t, x + m(t); y, a\big)$ as a sequence of functions of $y$ indexed by time $t$.
  By \eqref{eq:u-y-limit}, the sequence $u$ converges pointwise in $y$ as $t \to \infty$ to the limit $U\big(x - s(y, a)\big)$.
  By \eqref{eq:z-y-limit}, the derivative $\partial_y u$ converges pointwise in $y$ as $t \to \infty$ to the limit $-M(y, a) U'\big(x - s(y, a)\big)$.
  We recall a standard result from real analysis: if the convergence \eqref{eq:z-y-limit} is in fact \emph{uniform} in $y \in (-1, 1)$, then the limit of $u$ is differentiable in $y$ and its derivative is the limit of $\partial_y u$.
  Assuming this holds, we obtain
  \begin{equation}
    \label{eq:commute-lim-deriv-full}
    \begin{aligned}
      \lim_{t \to \infty} z\big(t, x + m(t); y, a\big) &= -M(y, a) U'(x - s(y, a))\\
      &= \partial_y\left[U\big(x - s(y, a)\big)\right] = -\partial_y s(y, a) U\big(x - s(y, a)\big).
    \end{aligned}
  \end{equation}
  Here, we have used the fact that $U\big(x - s(y, a)\big)$ is differentiable in $y$ if and only if $s$ is.
  Taking $y = 0$ in \eqref{eq:commute-lim-deriv-full} and using the $x$-uniformity in \eqref{eq:z-y-limit}, we obtain \eqref{eq:commute-lim-deriv}.
  Thus, it suffices to show that \eqref{eq:z-y-limit} holds uniformly in $y$ for each $x \in \R$ and $a > 0$.

  To establish uniformity in $y$, we rely on a quantitative form of \eqref{eq:z-y-limit}.
  The main result of \cite{NRR2} shows that the error in \eqref{20mar1918} is of order $t^{-1/2}$; \emph{Cf.} \eqref{20may1932}.
  As mentioned earlier, \eqref{eq:z-full} is structurally similar to the Fisher--KPP equation \eqref{20mar1914}: after an appropriate shift and exponential tilt, both resemble the Dirichlet heat equation on $\R_+$.
  The argument of \cite{NRR2} only requires this general structure, so a straightforward adaptation of Theorem~1.3 in \cite{NRR2} implies
  \begin{equation*}
    z\big(t, x + m(t); y, a\big) = -M(y, a) U'\big(x - s(y, a)\big) + R(t, x; y, a)
  \end{equation*}
  for a remainder $R$ satisfying
  \begin{equation}
    \label{eq:z-y-error}
    \abs{R(t, x; y, a)} \leq C(a) e^{-c\abs{x}} (t + 1)^{-1/2}.
  \end{equation}
  
  Crucially, this bound is independent of $y$.
  This is because $z$ only depends on $y$ through its initial data and the potential $f'\big(u(t, x; y, a)\big)$ in \eqref{eq:z-full}.
  Now, the initial conditions $z(0, x; y, a) = \delta(x - y + a)$ are supported in a common compact interval $[-a-1, -a + 1]$ and have the same mass.
  Likewise, the potentials $f'\big(u(t, x; y, a)\big)$ are monotone in $y$, and are thus controlled by the endpoints $y = \pm 1$.
  In particular, they are uniformly positive on the left and decay uniformly exponentially on the right.
  In this sense, they approximate the Dirichlet heat equation on $\R_+$ uniformly-well in $y.$
  
  The estimates in \cite{NRR2} are easily seen to be uniform in initial data and potentials of this form, so the error estimate \eqref{eq:z-y-error} is independent of $y$.
  It follows that \eqref{eq:z-y-limit} holds uniformly in $y$ for fixed $x$.
  As noted above, the lemma follows.  
\end{proof}

\end{document}